
\documentclass[10pt,reqno,final]{amsart}
\usepackage{amssymb,amsmath,amsthm}
\usepackage{mathrsfs,pifont}
\usepackage{graphicx}
\usepackage[notcite,notref]{showkeys}
\usepackage{url,hyperref}
 \usepackage{color}
\usepackage{float,version}
\usepackage{subfigure}

\setlength{\oddsidemargin}{-.01cm}
\setlength{\evensidemargin}{-.01cm}
\textheight=21.6cm
\textwidth=15cm
\setlength{\oddsidemargin}{0.7cm}
\setlength{\evensidemargin}{0.7cm}

    \numberwithin{equation}{section}
    \numberwithin{table}{section}
    \numberwithin{figure}{section}

\newtheorem{thm}{Theorem}[section]

\newtheorem{lemma}{Lemma}[section]
\newtheorem{rem}{Remark}[section]
\theoremstyle{definition}
\newtheorem{defn}{\bf Definition}[section]

\newcommand{\re}[1]{(\ref{#1})}

\def \ri {{\rm i}}

\def \vs {\hspace*{0.5cm}}
\def \af {\alpha}

\def \om {\omega}
\def \swf {\psi^{\alpha,n}}
\def\CP{{\mathcal P}}
\def \gi {G^{(\af)}}

\def \bx{\bs x}
\def \bxi{\bs \xi}


\def\i{\mathrm{i}}
\def\e{\mathrm{e}}
\def\d{\mathrm{d}}
\def\RR{\mathbb{R}}

\def\sph{\mathbb{S}}
\def\ball{\mathbb{B}}

\def\CP{{\mathcal P}}
\def\CH{{\mathcal H}}

\def\s{\sigma}
\def\NN{\mathbb{N}}
\def\tr{\mathsf{t}}

\newcommand{\bs}[1]{\boldsymbol{#1}}
\setcounter{secnumdepth}{4}
\begin{document}

\graphicspath{{figs/}}

\title[Prolate spheroidal wave functions on a ball]
{Ball Prolate spheroidal wave functions in  arbitrary dimensions}
\author[
	J. Zhang,\; H. Li,\; L. Wang\; $\&$\; Z. Zhang
	]{
		\;\; Jing Zhang${}^1$,    \;\; Huiyuan Li${}^{2}$  \;\; Li-Lian Wang${}^{3}$, \;\;  and\;\; Zhimin Zhang${}^{4}$
		}
	\thanks{${}^1$School of Mathematics and Statistics $\&$  Hubei Key Laboratory of Mathematical Sciences,  Central China Normal University, Wuhan 430079, China. The work of this author  is partially supported by the National Natural Science Foundation of China (NSFC 11671166). \\
		\indent ${}^{2}$State Key Laboratory of Computer Science/Laboratory of Parallel Computing,  Institute of Software, Chinese Academy of Sciences, Beijing 100190, China. The research of this author is partially   supported by the National Natural Science Foundation of China (NSFC 11471312, 91430216 and 91130014).\\
		\indent ${}^{3}$Division of Mathematical Sciences, School of Physical
		and Mathematical Sciences, Nanyang Technological University,
		637371, Singapore. The research of this author is partially supported by Singapore MOE AcRF Tier 1 Grant (RG 27/15). \\
		\indent ${}^{4}$Beijing Computational Sciences and Research Center, Beijing 100193, China,  and Department of Mathematics, Wayne State University, MI 48202, USA. The research of this author is supported
in part by the National Natural Science Foundation of China (NSFC 11471031 and 91430216),
the Joint Fund of the National Natural Science Foundation of China and the China Academy of Engineering Physics (NSAF U1530401),  and the U.S.
National Science Foundation (DMS-1419040).
}
\keywords{Generalized prolate spheroidal wave functions, arbitrary unit ball, Sturm-Liouville differential equation, finite Fourier transform,  Bouwkamp  spectral-algorithm}
 \subjclass[2010]{42B37, 33E30, 33C47, 42C05, 65D20, 41A10}




\begin{abstract}
In this paper, we introduce the  prolate spheroidal wave functions (PSWFs)  of real order $\af>-1$ on the  unit ball
in arbitrary dimension, termed as  ball PSWFs. They
 are  eigenfunctions of  both a weighted concentration integral operator, and
a Sturm-Liouville differential operator.  Different  from  existing works on multi-dimensional  PSWFs,  the ball PSWFs are
defined as a  generalisation of orthogonal {\em ball polynomials} in primitive variables with a  tuning  parameter $c>0$,
through a ``perturbation" of  the   Sturm-Liouville equation of the ball polynomials.
From this perspective,  we can explore   some  interesting   intrinsic connections between the ball PSWFs and  the finite Fourier  and  Hankel transforms.
%
%
We provide an efficient and accurate algorithm for computing  the ball PSWFs and the associated eigenvalues, and present various numerical results to illustrate the efficiency of the method. Under this uniform framework, we can recover the existing PSWFs by suitable variable substitutions.
\end{abstract}

\maketitle


\section{Introduction}
The PSWFs  are a family of orthogonal
bandlimited functions, originated from the investigation of time-frequency concentration problem in the 1960s (cf. \cite{Landau61,Landau62,Slep61,Slep64}).
In the study of time-frequency concentration problem, Slepian was the first to note that the PSWFs, denoted by $\big\{\psi_n(x;c)\big\}_{n=0}^\infty$,
are the  eigenfunctions of an integral operator  related to the finite Fourier transform:
\begin{equation}\label{pswf0int}
\lambda_n(c)\psi_n(x;c) =\int_{-1}^1 \e^{\ri cxt}\psi_n(t;c) \d t,\quad c>0,\quad  x\in I:=(-1,1),
\end{equation}
where $c > 0$ is the so-called bandwidth parameter determined by the concentration rate and concentration interval, and $\{\lambda_n(c)\}$ are the corresponding eigenvalues.
By a remarkable coincidence, Slepian et al. \cite{Slep61} recognized that the PSWFs
also form the eigen-system  of the second-order singular Sturm-Liouville differential equation, 
\begin{equation}\label{Pswfzero}
\partial_x \big((1-x^2)\partial_x \psi_n(x;c)\big)+\big(\chi_n(c)-c^2x^2\big)\psi_n(x;c)=0,\quad c>0,\quad x\in I,
\end{equation}
which appears in separation of variables for solving the Helmholtz equation in spheroidal coordinates.
The Sturm-Liouville  equation links up the PSWFs with  orthogonal polynomials,  and  this connection  plays a key role in the study of the PSWFs.

The properties inherent to these functions have subsequently
attracted many attentions for decades. Within the last few years,
there has been a growing research interest in various aspects of the
PSWFs including analytic and asymptotic studies \cite{Xiao.R03,Boyd.AMC03,RokXiao07,Botezatu2016}, approximation with PSWFs \cite{Sengupta2012,Bonami2015,XiaoH.R01,XiaT01,Osipov2014},
numerical evaluations \cite{Bouwkamp1947On,Boyd.acm,Walter.S05,AndLV07,Karo.M08,Alici.Shen16,Lederman17}, development of numerical methods using this bandlimited basis \cite{Chen2006,Kong2012A,Wang.ZZ14,Karnik.Wa16}.  In particular,   we refer to the monographs    \cite{Hogan12,Osipov13} and the recent review paper \cite{Wang2017A} for many references therein.

The extensions of the time-frequency concentration problems on a finite interval
to other geometries have been considered in e.g.,  \cite{Slep64,Bey.K07,F.Sim06,Karo.M16,Khalid.K16,Shkolnisky07,Zhang.WLZ17}. In \cite{Slep64}, D. Slepian   extended the finite Fourier transform \eqref{pswf0int} to a bounded multidimensional domain $\Omega\subset \RR^d$,
\begin{align}
\label{F2d}
\lambda \psi(\bx) = \int_{\Omega} \psi({\bs \tau})\e^{-{\rm{i}} c \langle \bx,  {\bs \tau}\rangle}\d{\bs \tau}, \quad \bx \in \Omega,
\end{align}
and  then investigated the time-frequency concentration  on the unit disk $\mathbb{B}^2$.
Their effort stimulated researchers'  interest to the discussion of \emph{generalized prolate spheroidal wave functions} in two dimensions.
 Beylkin et al. \cite{Bey.K07} explored some interesting properties of band-limited functions on a disk.
In \cite{Shkolnisky07,Landa2016}, the authors  studied  the integration and approximation of the PSWFs on a disk.
As usual, these \emph{generalized PSWFs} on  the disc satisfy
 the Sturm-Liouville differential equation and the integral equation at the same time. We also note that
Taylor \cite{M.Tay06}  generalized the PSWFs to the triangle by defining  a special type of Sturm-Liouville equation.

In contrast,  time-frequency concentration problem over a bounded  domain in   higher dimension
has received very limited  attention.
The works \cite{Miranian2004,F.Sim06,Bates2016} studied the time-frequency concentration problem on a sphere.   Khalid et al \cite{Khalid.K16} formulated and solved the analog of Slepian spatial-spectral concentration problem on the three-dimensional ball,
 and Michel et al  \cite{Michel} extended it   to  vectorial case.   We note that  the  time-frequency/spatial-spectral concentration in both cases  is applicable  for  ``bandlimited functions"  with a finite (spherical harmonic or Bessel-spherical harmonic) expansion instead of those whose Fourier transform have a bounded  support. More importantly,
 many properties, in particular those relating to orthogonal polynomials, are still unknown without a Strum-Liouville differential equation.

In this paper, we
propose a generalization of PSWFs of real order $\af>-1$ on the  unit ball  $\mathbb{B}^d:=\{ \bx\in\RR^d:  \|\bx\|<1\}$ of an arbitrary dimension $d$.   The ball  PSWFs in the current paper  inherit  the merit of  PSWFs in one dimension such that they are eigenfunctions of an integral operator and a differential  operator simultaneously.

In the first place,  we  introduce a Sturm-Liouville  differential equation  and then define the ball PSWFs as   eigenfunctions of the eigen-problem:
 \begin{equation}\label{ball_pswf}
\big[-(1-\|\bx\|^2)^{-\af}\nabla\cdot({\bs {\rm I}}-\bx\bx^{\tr})(1-\|\bx\|^2)^{\af}\nabla+ c^2 \|\bx\|^2\big]\psi(\bx; c)=\chi\, \psi(\bx; c),\;\; \bx\in\ball^d, \  \alpha>-1.
\end{equation}
Hereafter, composite differential operators  are  understood in the convention of right associativity, for instance,
$$\nabla\cdot({\bs {\rm I}}-\bx\bx^{\tr})(1-\|\bx\|^2)^{\af}\nabla
=\nabla\cdot[({\bs {\rm I}}-\bx\bx^{\tr})(1-\|\bx\|^2)^{\af}\nabla].$$
In distinction to   \cite{Slep64} and other related works,  the Sturm-Liouville  differential equation  \eqref{ball_pswf} here  is defined in primitive variables instead of  the radial variable.
It extends the one-dimensional Sturm-Liouville differential equation \eqref{Pswfzero}  intuitively  while  preserves the key features: symmetry, self-adjointness and  form of the bandwidth term  $c^2 \|\bx\|^2$.
More importantly,  \eqref{ball_pswf} extends the orthogonal ball polynomials \cite{Dai2013}  (the case with  $c=0$) to ball PSWFs with a tuning parameter $c>0$. The implication  is twofold.
This not only  provides  a tool  to derive  analytic and asymptotic formulae for the PSWFs on an arbitrary unit ball and the associated eigenvalues, but also   offers an optimal Bouwkamp  spectral-algorithm for the computation of PSWFs just as in one dimension \cite{Bouw50}: expand them in the basis of the orthogonal ball polynomials, and reduce the problem to  an generalized algebraic eigenvalue problem with a tri-digonal matrix.

The second purpose of this paper is  to make an investigation of the integral transforms
behind the ball PSWFs,  and explore their  connections with existing works.
More specifically, we can  show  that  the commutativity  of
 the Sturm-Liouville  differential operator  in \eqref{ball_pswf}  with the integral operator of the finite Fourier transform.  As a result,  the ball PSWFs are also eigenfunctions of the finite Fourier transform:
\begin{equation}\label{psi00}
\lambda\psi(\bx;c) =\int_{\ball^d}\e^{-\ri
c \langle \bx, \bs \tau\rangle}\psi(\bs \tau; c)(1-\|\bx\|^2)^{\af}\d\bs \tau
:= [{\mathscr F}_{c}^{(\af)}\psi](\bx;c),\quad \bx\in\ball^d,\  c>0,\;\af>-1.
\end{equation}
Morover, it has been demonstrated that the $(d-1)$-dimensional  spherical harmonics ($Y^n_{\ell}, \, 1\le \ell\le a_n^d,\, n\ge 0;  $ see  \S \ref{SH} and refer to \cite{Dai2013}) are  eigenfunctions of the  Fourier transform on the unit sphere $\mathbb{S}^{d-1}$ {\cite[Lemma 9.10.2]{Askey}}. 
Thus,  by writing
$$\psi(\bx;c)=\|\bx\|^{\frac{1-d}{2}}\phi(\|\bx\|;c)Y^n_{\ell}(\bx /\|\bx\|),$$
the finite Fourier transform \eqref{psi00} is reduced to the equivalent (symmetric) finite Hankel transform in radial direction (also refer to \cite[Eq.\,(i)]{Slep64} for the case $d=2$ and $\alpha=0$),
\begin{align}
(2\pi)^{-\frac{d}{2}}c^{\frac{d-1}{2}}\lambda\,  \i^n \, \phi(\rho;c)= \int_{0}^{1}  J_{n+\frac{d-2}{2}}(c\rho r)\phi(r;c)\sqrt{c\rho r} (1-r^2)^{\alpha} \d r, \quad 0<\rho<1
.\label{hphi}
\end{align}
The eigenfunctions $\phi(r;c) $ of \eqref{hphi}, which are also referred to as  {\em generalized prolate spheroidal wave functions} in \cite{Slep64}, are further shown to be the bounded solutions of the following  Sturm-Liouville differential equation:
\begin{align}\label{SLphi}
\begin{split}
&\Big[-(1-r^2)^{-\alpha}\partial_r(1-r^2)^{\alpha+1}\partial_r
+\frac{(2n+d-1)(2n+d-3)}{4r^2}+c^2r^2\Big]
\phi(r;c)
\\
  =&\Big[\chi+\frac{(d-1)(4\alpha+d+1)}4\Big]\phi(r;c) .
  \end{split}
\end{align}
One  can also refer to \cite[Eq.\,(ii)]{Slep64} for the case $\alpha=0$ and $d=2$, and refer to \eqref{Pswfzero} for the case $\alpha=0$ and $d=1$ in which $n\in\{0,1\}$.
In such a way,  \eqref{ball_pswf},  \eqref{psi00},  \eqref{hphi} and \eqref{SLphi}  reveal the intrinsic connections among the finite  Fourier transform, finite Hankel transform
and the Sturm-Liouville differential operator behind the   ball PSWFs.

%
%

The rest of the paper is organized as follows. In Section \ref{Sect:2}, we introduce some
of the special functions and orthogonal polynomials,
and collect their relevant properties  to be used throughout the paper.
 In Section \ref{secG}, we propose the Sturm-Liouville  differential equation on an arbitrary unit ball in primitive variables, define  the  ball PSWFs and study their
analytic properties.   In Section \ref{sect4}, we  study the ball PSWFs as eigenfunctions of  the  integral operators,
make investigations of their (finite) Fourier transform and  (finite) Hankel transform, and
present other important features of  ball PSWFs.
An efficient method for computing the ball PSWFs  using the differential operator \eqref{ball_pswf}
together  with the connection with existing works is descibed in Section \ref{sect:5}. Numerical results
are provided  to justify our theory  and  to demonstrate the efficiency of our algorithm.

\section{Special functions: spherical  harmonics and ball polynomials}\label{Sect:2}

In this section, we review some relevant special functions which especially  include the spherical harmonics and ball polynomials.
More importantly, we derive some new formulations and properties to facilitate the discussions in the forthcoming sections.

\subsection{Some related orthogonal polynomials and special functions}
We briefly  review the relevant  properties  of some orthogonal  polynomials and related special functions to be used throughout this paper, which can be found in various resources,
   see    e.g., \cite{Abra72, Dai2013, Xu2001,STW11}.

For real $\af,\beta>-1$, the normalized Jacobi polynomials, denoted by
$\{P_{n}^{(\af,\beta)}(\eta)\}_{n\ge 0},$   satisfy  the
three-term recurrence relation:
\begin{equation}\label{Jacobi}
\begin{split}
&\eta {P}_n^{(\af,\beta)}(\eta)=a_n^{(\af,\beta)} {P}_{n+1}^{(\af,\beta)}(\eta)+b_n^{(\af,\beta)} {P}_{n}^{(\af,\beta)}(\eta)+a_{n-1}^{(\af,\beta)} {P}_{n-1}^{(\af,\beta)}(\eta),
\\
& P_{0}^{(\af,\beta)}(\eta)=\frac{1}{h^{(\alpha,\beta)}_0},
\quad
P_{1}^{(\af,\beta)}(\eta)=  \frac{1}{2h^{(\alpha,\beta)}_1}
  \big( (\af+\beta+2)\eta+(\af-\beta)\big),
\end{split}
\end{equation}
where $\eta\in I:=(-1,1)$, and
\begin{align*}
&a_n^{(\af,\beta)}=\sqrt{ \frac{4(n+1)(n+\alpha+1)(n+\beta+1)(n+\alpha+\beta+1)}{(2n+\alpha+\beta+1)(2n+\alpha+\beta+2)^2(2n+\alpha+\beta+3) } }
,
\\
&b_n^{(\af,\beta)} = \frac{\beta^2-\alpha^2}{(2n+\alpha+\beta)(2n+\alpha+\beta+2)},
  \\
  & h_n^{(\af,\beta)} = \sqrt{\frac{\Gamma(n+\alpha+1)\Gamma(n+\beta+1)}{2(2n+\alpha+\beta+1) \Gamma(n+1) \Gamma(n+\alpha+\beta+1)}   }.
\end{align*}
Let $\omega^{\af,\beta}(\eta)=(1-\eta)^{\af}(1+\eta)^{\beta}$ be the Jacobi weight function. The normalized Jacobi polynomials are orthonormal in the sense that
\begin{equation}\label{Jacobiorth}
\int_{-1}^{1} {P}_{n}^{(\af,\beta)}(\eta){P}_{m}^{(\af,\beta)}(\eta)\omega_{\af,\beta} (\eta)
\d{\eta}=2^{\alpha+\beta+2}\delta_{nm}.
\end{equation}
The leading coefficient of $P_n^{(\af,\beta)}(\eta)$ is
\begin{equation}\label{knkn}
\kappa_n^{(\af,\beta)}
=\frac{1}{2^{n} h_n^{(\alpha,\beta)}} \binom{2n+\alpha+\beta}{n}.
\end{equation}
The Jacobi polynomials are the eigenfunctions of the
Sturm-Liouville problem
\begin{equation}\label{JacobiSL}
\mathscr{L}_{\eta}^{(\af,\beta)} P_{n}^{(\af,\beta)}(\eta):=
-\frac{1}{\omega_{\af,\beta}(\eta)}
\partial_{\eta}\big(\omega_{\af+1,\beta+1}(\eta)\partial_{\eta}P_{n}^{(\af,\beta)}(\eta)\big)=
 \lambda_n^{(\af,\beta)}P_n^{(\af,\beta)}(\eta),\quad  \eta\in I,
\end{equation}
with the corresponding  eigenvalues $\lambda_n^{(\af,\beta)}=n(n+\af+\beta+1).$

In this paper, we shall also use  the Bessel function of the first kind of order $\nu>-1/2$, denoted by $J_{\nu}(z)$. It satisfies the Bessel's  equation:
\begin{equation*}
 z^2 \partial_z^2 J_{\nu}(z) + z  \partial_z J_{\nu}(z)+(z^{2}-\nu ^{2}) J_{\nu}(z)=0,
 \quad {z\ge 0}.
\end{equation*}
and has the  Poisson integral representation:
\begin{equation}\label{JPoisson}
\begin{split}
J_{\nu}(z)=\frac{z^{\nu}}{2^\nu \sqrt \pi \Gamma(\nu+\frac{1}{2})}
\int_{-1}^1e^{\ri zt}(1-t^2)^{\nu-\frac{1}{2}}\d{t},\quad { z\ge 0},\; \nu >-\frac 1 2.
\end{split}
\end{equation}
Moreover, we have
\begin{align}
\label{Jexpan}
J_{\nu}(z)=\sum _{m=0}^{\infty }{\frac {(-1)^{m}}{m!\,\Gamma (m+\nu +1)}}
{\left({\frac {z}{2}}\right)}^{2m+\nu },\quad \nu\geq 0.
\end{align}
and  (cf. \cite{Watson44}):
\begin{align}
\label{DJz}
\partial_z \left(\frac{J_\nu(z)}{z^{\nu}}\right)= - \frac{J_{\nu+1}(z)}{z^{\nu}},\quad z>0,\;\nu >-\frac 1 2.
\end{align}

\subsection{Spherical harmonics}
\label{SH}We first introduce some notation. Let $\mathbb{R}^d$ be the  $d$-dimensional Euclidean space. For $\bx\in \mathbb{R}^d$, we write $\bx =
(x_1,\cdots,x_d)^{\tr}$ as a column vector, where $(\cdot)^{\tr}$ denotes matrix or vector transpose. The inner product of $\bx,\bs y\in \mathbb{R}^d$ is denoted by $\bx\cdot\bs y$ or $\langle\bx,\bs y\rangle:=   \bx^{\tr} \bs y =\sum^d_{i=1} x_iy_i$, and the
norm of $\bx$ is denoted by $\|\bx\| := \sqrt{ \langle\bx, \bx\rangle}=\sqrt{\bx^{\tr}\bx}$.
The unit sphere $\mathbb{S}^{d-1}$ and the unit ball $\ball^d$ of $\mathbb{R}^d$ are respectively defined by
\begin{equation*}
\mathbb{S}^{d-1}:=\big\{\hat \bx\in \RR^d: \|\hat \bx\|=1\big\},\quad \ball^d:=\big\{\bx\in \RR^d: r=\|\bx\|{ \leq}1\big\}.
\end{equation*}
For   each $ \bx\in \RR^d$, we introduce its polar-spherical coordinates $(r,\hat \bx)$ such that $r=\|\bx\|$ and $ \bx =r\hat\bx,\;\hat\bx \in \mathbb{S}^{d-1}.$ Define the inner product of $L^2(\mathbb{S}^{d-1})$ as
\begin{equation}\label{fginner}
        ( f, g )_{\mathbb{S}^{d-1}}: = \int_{\mathbb{S}^{d-1}} f(\hat \bx ) g(\hat \bx) \d\s(\hat \bx),
\end{equation}
where $d \s$ is the surface measure.
Define the differential operator 
\begin{align}
\label{Dij}
D_{ij} = x_j \partial_{x_i}-x_i \partial_{x_j}  =\partial_{\theta_{ij}},\quad 1\le i\neq j \le d,
\end{align}
where $\theta_{ij}$ is the angle of polar coordinates in the $(x_i,x_j)$-plane by $(x_i,x_j)= r_{ij} ( \cos \theta_{ij} , \sin \theta_{ij} )$ with $r_{ij}\ge 0$ and $0\le \theta_{ij}\le 2\pi$.
Then the Laplace-Beltrami operator $\Delta_0$ (i.e., the spherical part of $\Delta$) is  defined by
\cite{Dai2013}
\begin{align}
\label{Delta0}
\Delta_0 = \sum_{1\le j  < i\le d} D_{ij}^2.
\end{align}

Let $\CP_n^d$ be the space of homogeneous polynomials of degree
$n$ in $d$ variables, i.e.,
$$
\CP_n^d={\rm span}\big\{ \bx^{\bs k} = x_1^{k_1}x_2^{k_2}\dots x_d^{k_d}\,:\, |\bs k|=k_1+k_2+\cdots+ k_d=n\big\}.
$$
 Define the space of all harmonic polynomials of degree $n$ as
\begin{equation*}
\mathcal{H}_n^d:=\big\{p\in \CP_n^d:\Delta p=0 \big\}.
\end{equation*}
It is seen that  a harmonic polynomial of degree $n$ is a homogeneous polynomial degree $n$ that satisfies the Laplace equation.

Spherical harmonics are the restriction of harmonic polynomials on the unit sphere.
Note that for any $Y\in \CH_n^d$, we have
\begin{equation}\label{Ybxcase}
Y(\bx) = r^n Y(\hat \bx),\quad \bx = r \hat \bx, \;\; r=\|\bx\|,\;\;\hat{\bx}\in \mathbb{S}^{d-1},
\end{equation}
in the spherical polar coordinates. 
It is evident that $Y(\bx)$ is uniquely determined by its restriction $Y(\hat \bx)$ on the sphere.
With a little abuse of notation, we still use $\CH_n^d$ to denote the set of all spherical
harmonics of degree $n$ on the unit sphere $\mathbb{S}^{d-1}$. Here,  we understand that the variable
is $\hat \bx$, i.e.,
$$
\CH_n^d=\{ Y(\hat {\bx}): \hat {\bx}\in \mathbb{S}^{d-1},\,  Y \in \CP_n^d, \, \Delta Y =0\}.
$$

In spherical polar coordinates, the Laplace operator can be written as
\begin{equation} \label{eq:Delta}
   \Delta = \frac{d^2}{d r^2} + \frac{d-1}{r} \frac{d}{dr} + \frac{1}{r^2} \Delta_0,
\end{equation}
so for  any $Y\in \CP_n^d$,
$$
   \Delta Y(\bx) = \Delta [r^nY(\hat \bx)] = n(n+d-2)\, r^{n-2} Y(\hat \bx) + r^{n-2} \Delta_0Y(\hat \bx).
$$
Thus, the spherical
harmonics are  eigenfunctions of the Laplace-Beltrami operator,
\begin{equation} \label{eq:LaplaceBeltrami}
        \Delta_0  Y(\hat \bx) = - n(n+d-2) Y(\hat \bx), \quad Y \in \CH_n^d,\quad\hat \bx \in \mathbb{S}^{d-1},
\end{equation}
As a result, the  spherical harmonics of different degree $n$ are orthogonal with respect to the inner product
$( \cdot, \cdot )_{\mathbb{S}^{d-1}}$.

It is known that (cf. \cite{Dai2013})
\begin{equation}\label{dimenCP}
    \dim \CP_n^d  = \binom{n+d-1}{n},  \quad
      a_n^d: = \dim \mathcal{H}_n^d = \binom{n+d-1}{n} - \binom{n+d-3}{n-2}.
\end{equation}
In what follows, for fixed $n\in \NN_0$,  we always denote by $\{Y_\ell^n: 1 \le \ell \le a_n^d\}$ the   (real) orthonormal  basis of $\CH_n^d$. In view of \eqref{eq:LaplaceBeltrami}, we have the orthogonality:
\begin{align}
\label{Yorth}
 ( Y_\ell^n, Y_\iota^m)_{\mathbb{S}^{d-1}}=\delta_{nm} \delta_{\ell\iota},
 \quad  \ell \in \Upsilon_{\!n}^d, \;\;    \iota\in \Upsilon_{\!m}^d,
 \end{align}
 where for notational convenience,  we introduce the index set
 \begin{equation}\label{indexsetA}
 \Upsilon_{\!n}^d=\{l\,:\, 1\le l\le a_n^d\},\quad d, n\in {\mathbb N}.
 \end{equation}

\begin{rem}\label{exaples} {~}
\begin{itemize}
\item For $d=1$,  there exist only two  orthonormal harmonic polynomials: $Y^0_1=\frac{1}{\sqrt{2}}$ and $Y^1_1=\frac{x}{\sqrt{2}}$. 
\item For $d=2$,  the space  $\mathcal{H}_n^2$ has dimension  $a_n^2=2-\delta_{n0}$ and the orthogonal basis of $\mathcal{H}_n^2$ can be  given by the real
and imaginary parts of $(x_1+\i x_2)^n$. Thus, in polar coordinates $\bx = (r\cos \theta, r\sin \theta)^{\tr}\in \RR^2$,
we simply take
$$
Y_1^0(\bx) = \frac{1}{\sqrt{2\pi}}, \quad  Y_1^n(\bx) =  \frac{ r^n}{\sqrt{ \pi}} \cos n \theta, \quad  Y_2^n(\bx) =  \frac{ r^n}{\sqrt{\pi}}  \sin n \theta, \quad n\ge 1.
$$
\item For $d=3$, the dimensionality of the harmonic polynomial space of degree $n$ 
is  $a_n^3=2n+1$. In spherical coordinates
$\bx=(r\sin \theta \cos \phi, r\sin \theta \cos \phi, r\cos \theta)^{\tr}\in \RR^3$, the orthonormal basis can be taken as
\begin{align*}
 Y^n_1(\bx) = \frac{1}{\sqrt{8\pi}}P^{(0,0)}_{n} (\cos \theta ), \quad  &Y^n_{2k}(\bx) = \frac{ r^n}{2^{k+1} \sqrt{ \pi}} (\sin\theta)^{k}  P^{(k,k)}_{n-k} (\cos \theta ) \cos k \phi, \quad   1\le k \le n,
 \\
 &Y^n_{2k+1}(\bx) =  \frac{ r^n}{2^{k+1} \sqrt{ \pi}}  (\sin\theta)^{k}  P^{(k,k)}_{n-k} (\cos \theta ) \sin k \phi, \quad 1\le  k \le  n.
\end{align*}
\end{itemize}
\end{rem}

The spherical harmonics satisfy  the following explicit integral relation.
\begin{lemma}[{\cite[Lemma 9.10.2]{Askey}}]\label{lm:Ffx} For any $\bs{\hat x},  \bs{\hat \xi} \in \sph^{d-1}$ and $w> 0$, we have
\begin{equation}
\label{eq:Ffx}
\int_{\sph^{d-1}}  \e^{-\i w \langle\bs{\hat \xi}, \bs{\hat x\rangle}} Y^n_{\ell}(\bs{\hat x}) \d\s(\bs{\hat x})
 = \frac{ {(2\pi)}^{\frac{d}{2}} (-\i)^n }{{w}^{\frac{d-2}2}}  J_{n+\frac{d-2}2}(w)   Y_{\ell}^{n}(\bs{\hat \xi}).
\end{equation}
\end{lemma}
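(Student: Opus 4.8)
The plan is to treat the left-hand side, with a general vector $\bxi\in\RR^d$ in place of $w\hat{\bxi}$, as the Fourier transform of the surface measure $Y^n_\ell\,\d\s$, and to reduce the degree-$n$ case to the radial degree-$0$ case by a constant-coefficient harmonic differential operator. Write $\nu:=(d-2)/2$ and let $P^n_\ell(\bx):=\|\bx\|^nY^n_\ell(\bx/\|\bx\|)$ be the associated solid harmonic, which is homogeneous of degree $n$ and harmonic. The elementary identity $P^n_\ell(\partial_{\bxi})\,\e^{\langle\bxi,\bs a\rangle}=P^n_\ell(\bs a)\,\e^{\langle\bxi,\bs a\rangle}$, valid for any constant vector $\bs a$, applied with $\bs a=-\i\hat{\bx}$ and combined with homogeneity (so that $P^n_\ell(-\i\hat{\bx})=(-\i)^nP^n_\ell(\hat{\bx})=(-\i)^nY^n_\ell(\hat{\bx})$ since $\|\hat{\bx}\|=1$), gives $Y^n_\ell(\hat{\bx})\,\e^{-\i\langle\bxi,\hat{\bx}\rangle}=\i^nP^n_\ell(\partial_{\bxi})\,\e^{-\i\langle\bxi,\hat{\bx}\rangle}$. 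Integrating over $\sph^{d-1}$ and differentiating under the integral sign (justified since the integrand and its $\bxi$-derivatives are continuous and bounded on the compact sphere, locally uniformly in $\bxi$) reduces the claim to
\[
\int_{\sph^{d-1}}\e^{-\i\langle\bxi,\hat{\bx}\rangle}Y^n_\ell(\hat{\bx})\,\d\s(\hat{\bx})=\i^nP^n_\ell(\partial_{\bxi})\,G(\|\bxi\|),\qquad G(\rho):=\int_{\sph^{d-1}}\e^{-\i\rho\langle\bs e_d,\hat{\bx}\rangle}\,\d\s(\hat{\bx}),
\]
where $G$ is radial by rotation invariance.

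Second, I would compute the base case $G$. Slicing $\sph^{d-1}$ along the $x_d$-axis gives $G(\rho)=\omega_{d-2}\int_{-1}^1\e^{-\i\rho t}(1-t^2)^{(d-3)/2}\,\d t$ with $\omega_{d-2}=|\sph^{d-2}|=2\pi^{(d-1)/2}/\Gamma((d-1)/2)$; since the weight is even the imaginary part integrates to zero, and the Poisson representation \eqref{JPoisson} of order $\nu$ (note $(d-3)/2=\nu-\tfrac12$) yields $G(\rho)=(2\pi)^{d/2}\rho^{-\nu}J_\nu(\rho)$ after collapsing the Gamma factors via the duplication formula. This is exactly the $n=0$ instance of the lemma and pins down the constant $(2\pi)^{d/2}$.

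Third, I would invoke Hobson's formula $P^n_\ell(\partial_{\bxi})\,F(\|\bxi\|)=P^n_\ell(\bxi)\,\big(\tfrac1\rho\tfrac{\d}{\d\rho}\big)^nF(\rho)$, valid because $P^n_\ell$ is harmonic and $G$ is a smooth (indeed even and entire, by \eqref{Jexpan}) function of $\rho$. Writing $P^n_\ell(\bxi)=\rho^nY^n_\ell(\hat{\bxi})$ and applying the Bessel derivative identity \eqref{DJz} $n$ times, namely $\big(\tfrac1\rho\tfrac{\d}{\d\rho}\big)^n\big(\rho^{-\nu}J_\nu(\rho)\big)=(-1)^n\rho^{-(\nu+n)}J_{\nu+n}(\rho)$, collapses everything to $\i^n(-1)^n(2\pi)^{d/2}\rho^{-\nu}J_{n+\nu}(\rho)\,Y^n_\ell(\hat{\bxi})$. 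Since $\i^n(-1)^n=(-\i)^n$, setting $\bxi=w\hat{\bxi}$ (so $\rho=w$) recovers \eqref{eq:Ffx} exactly.

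The main obstacle is the clean statement and justification of Hobson's formula for a harmonic solid harmonic acting on a radial function, which is the single non-elementary ingredient; the remainder is Gamma-function bookkeeping (reconciling $\omega_{d-2}$, the duplication formula, and the factor $2^\nu\sqrt\pi\,\Gamma(\nu+\tfrac12)$ into $(2\pi)^{d/2}$) together with the routine observation that $\rho^{-\nu}J_\nu(\rho)$ is an even entire function of $\rho$, so that $\tfrac1\rho\tfrac{\d}{\d\rho}$ acts harmlessly at the origin. The degenerate case $d=1$ (where $\nu=-\tfrac12$ lies outside the range of \eqref{JPoisson} and $\sph^0$ reduces to two points) should be checked separately by direct computation. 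Alternatively one could bypass Hobson's formula via the Funk--Hecke theorem, reducing the scalar to Gegenbauer's generalization of the Poisson integral, but the derivative route above uses only the tools already recorded in \eqref{JPoisson} and \eqref{DJz}.
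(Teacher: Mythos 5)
Your proof is correct, but it follows a genuinely different route from the one the paper relies on. The paper (and the cited source, whose argument appears in the paper's commented-out appendix) proves \eqref{eq:Ffx} by expanding the plane wave $\e^{\i w t}$ in Gegenbauer-type Jacobi polynomials $P_m^{(\alpha,\alpha)}(t)$ with $\alpha=\frac{d-3}{2}$ (coefficients computed via the Rodrigues formula and the Poisson integral \eqref{JPoisson}), then invoking the addition theorem/reproducing kernel $\sum_{\iota} Y^m_\iota(\bs{\hat x})Y^m_\iota(\bs{\hat \xi})\propto P^{(\alpha,\alpha)}_m(\langle \bs{\hat x},\bs{\hat\xi}\rangle)$, so that orthonormality of $\{Y^n_\ell\}$ extracts the single term $m=n$; this is the Funk--Hecke route you mention only as an alternative. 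You instead reduce degree $n$ to degree $0$ by applying the solid-harmonic differential operator $P^n_\ell(\partial_{\bxi})$ to the radial integral $G(\|\bxi\|)$, compute the base case by slicing and \eqref{JPoisson}, and collapse via Hobson's formula and iteration of \eqref{DJz} — the classical Bochner-type argument. What the paper's route buys is that it stays entirely inside the orthogonal-polynomial toolkit the paper develops (Rodrigues formula, reproducing kernels, orthogonality) and yields the full plane-wave expansion as a byproduct; what your route buys is that it avoids the addition theorem altogether, at the cost of importing Hobson's formula, which is the one ingredient you correctly flag as non-elementary and which you should either prove (a short induction on $n$, using harmonicity of $P^n_\ell$) or cite precisely, since it is not among the facts recorded in the paper. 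Two cosmetic points: the Gamma factors in your base case cancel directly ($\Gamma(\nu+\tfrac12)=\Gamma(\tfrac{d-1}2)$ against the same factor in $|\sph^{d-2}|$), so no duplication formula is needed; and your separate treatment of $d=1$ is indeed necessary, though note the paper's own route suffers the same degeneracy there (it needs $\alpha=\frac{d-3}{2}>-1$), so this is not a disadvantage of your approach.
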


For any  function $f\in L^2(\RR^d),$ we expand it in spherical harmonic series:
\begin{equation}\label{fbxdefn}
   f(\bx) =\sum_{n=0}^{\infty} \sum_{\ell=1}^{a_n^d}   f^n_{\ell}(r)   Y^n_{\ell}(\bs {\hat x}),\quad f_{\ell}^n(r) = \int_{\sph^{d-1}} f(r \hat \bx) Y^n_{\ell}(\hat \bx) \d\s(\hat \bx).
\end{equation}
Then its Fourier transform
\begin{align*}
\mathscr{F}[f](\bxi):=\int_{\mathbb{R}^d} f(\bx)\e^{-\rm{i}\langle \bxi,  \bx \rangle}\d{\bx},
\end{align*}
 can be represented in spherical harmonic series with the coefficients being  the Hankel  transform of its original spherical harmonic coefficients.
\begin{thm}\label{th:Ffx}
For any  function $f(\bs x)\in L^2(\RR^d),$
we have
\begin{equation}\label{Fourierf}
\begin{split}
\mathscr{F}[f](\bxi)
= \sum_{n=0}^{\infty}  \frac{ {(2\pi)}^{\frac{d}{2}} (-\i)^n }{{\rho}^{\frac{d-2}2}}  \sum_{\ell=1}^{a_n^d}  Y_{\ell}^{n} (\bs{\hat \xi}) \mathscr{H}_{n+\frac{d-2}{2}}^d [f^n_{\ell}](\rho), \quad
\end{split}
\end{equation}
where $\bxi = \rho \hat \bxi,\, \hat\bxi \in \sph^{d-1},\, \rho\ge 0,$ and the Hankel transform is defined by
\begin{equation}\label{HankelTrans}
\mathscr{H}_{\nu}^d[f](\rho)\equiv\int_0^\infty J_{\nu}(\rho r) f(r)r^{\frac{d}{2}}\d{r},\quad \rho\ge 0,\,\nu> -\frac{1}{2},\;r>0.
\end{equation}
\end{thm}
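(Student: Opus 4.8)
The plan is to substitute the spherical-harmonic expansion \eqref{fbxdefn} of $f$ directly into the defining integral of $\mathscr{F}[f]$ and collapse the angular integration using the explicit formula of Lemma~\ref{lm:Ffx}. First I would pass to polar-spherical coordinates $\bx=r\hat\bx$, so that $\d\bx=r^{d-1}\,\d r\,\d\s(\hat\bx)$ and, writing $\bxi=\rho\hat\bxi$, the phase factors as $\langle\bxi,\bx\rangle=r\rho\,\langle\hat\bxi,\hat\bx\rangle$. This recasts the Fourier transform as
\begin{equation*}
\mathscr{F}[f](\bxi)=\int_0^\infty\Big(\int_{\mathbb{S}^{d-1}}f(r\hat\bx)\,\e^{-\i r\rho\langle\hat\bxi,\hat\bx\rangle}\,\d\s(\hat\bx)\Big)r^{d-1}\,\d r.
\end{equation*}
Inserting $f(r\hat\bx)=\sum_{n,\ell}f_\ell^n(r)\,Y_\ell^n(\hat\bx)$ and exchanging the order of the sum and the integrals isolates, for each pair $(n,\ell)$, the angular integral $\int_{\mathbb{S}^{d-1}}Y_\ell^n(\hat\bx)\,\e^{-\i r\rho\langle\hat\bxi,\hat\bx\rangle}\,\d\s(\hat\bx)$.

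Next I would apply Lemma~\ref{lm:Ffx} with $w=r\rho$, which evaluates this angular integral as $(2\pi)^{d/2}(-\i)^n(r\rho)^{-(d-2)/2}J_{n+(d-2)/2}(r\rho)\,Y_\ell^n(\hat\bxi)$. Pulling the factor $\rho^{-(d-2)/2}$ and the harmonic $Y_\ell^n(\hat\bxi)$ outside the radial integral leaves the integrand $f_\ell^n(r)\,J_{n+(d-2)/2}(r\rho)\,r^{d-1}r^{-(d-2)/2}$; since $r^{d-1}r^{-(d-2)/2}=r^{d/2}$, the surviving radial integral is exactly $\mathscr{H}_{n+(d-2)/2}^d[f_\ell^n](\rho)$ in the sense of \eqref{HankelTrans}. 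Collecting the constants then reproduces the claimed series \eqref{Fourierf}.

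The only substantive point, and the step I expect to demand the most care for general $f\in L^2(\RR^d)$, is the interchange of summation with integration. I would justify it by first establishing the identity on a dense well-behaved subclass (Schwartz functions, or finite spherical-harmonic sums with rapidly decaying radial profiles), where Fubini's theorem and absolute convergence of the angular series apply directly, and then extending to arbitrary $f\in L^2$ by the continuity of $\mathscr{F}$ together with the isometry (up to constants) of the componentwise Hankel transforms $\mathscr{H}_{n+(d-2)/2}^d$ on the relevant weighted radial $L^2$ spaces, invoking Parseval's identity for the orthonormal system $\{Y_\ell^n\}$ from \eqref{Yorth}. Alternatively, one may simply read \eqref{Fourierf} as the spherical-harmonic expansion of $\mathscr{F}[f]$ and identify its coefficients via \eqref{fbxdefn} applied to $\mathscr{F}[f]$; the computation above shows these coefficients coincide with the Hankel transforms of the $f_\ell^n$, which sidesteps the delicate termwise interchange altogether.
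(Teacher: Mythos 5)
Your proposal is correct and takes essentially the same approach as the paper's proof: substitute the spherical-harmonic expansion \eqref{fbxdefn} into the Fourier integral in polar-spherical coordinates, evaluate the angular integral via Lemma \ref{lm:Ffx} with $w=\rho r$, and use $r^{d-1}\,r^{-\frac{d-2}{2}}=r^{\frac{d}{2}}$ to recognize the radial integral as the Hankel transform \eqref{HankelTrans}. The paper performs the sum--integral interchange purely formally, so your density/Parseval justification is additional rigor beyond, but fully consistent with, the paper's argument.
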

\begin{proof}
Denote by $(r,\bs{\hat x})$ and $(\rho, \bs {\hat\xi})$ the polar-spherical coordinates of $\bs x$ and $\bs \xi$, respectively.
Then applying the Fourier transform to the series \eqref{fbxdefn}, we obtain
\begin{equation*}
\begin{split}
\mathscr{F}[f](\bs \xi)&=  \int_{\RR^d} f(\bx) \e^{-\i \langle \bs \xi,  \bx\rangle} \d{x} = \sum_{n=0}^{\infty} \sum_{\ell=1}^{a_n^d}    \int_{0}^{\infty} f^n_{\ell}(r) r^{d-1} \d r \int_{\sph^{d-1}}
  Y^n_{\ell}(\bs {\hat x})  \e^{-\i \rho r   \langle \bs{\hat\xi},  \bs{\hat x}\rangle}  \d\s(\hat \bx).
\end{split}
\end{equation*}
Further,  using Lemma \ref{lm:Ffx} leads to
\begin{equation*}
\begin{split}
\mathscr{F}[f](\bs \xi)&= \sum_{n=0}^{\infty}  \sum_{\ell=1}^{a_n^d}   \int_{0}^{\infty} f^n_{\ell}(r) r^{d-1} \d r
\frac{ {(2\pi)}^{\frac{d}{2}}(- \i)^n }{({\rho r})^{\frac{d-2}2}}   J_{n+\frac{d-2}2}(\rho r)
 Y_{\ell}^{n}(\hat \bxi)
   \\  &
= \sum_{n=0}^{\infty}  \frac{ {(2\pi)}^{\frac{d}{2}} (-\i)^n }{\rho^{\frac{d-2}2}}  \sum_{\ell=1}^{a_n^d}  Y_{\ell}^{n} (\hat \bxi)\int_{0}^{\infty} f^n_{\ell}(r) J_{n+\frac{d-2}2}(\rho r)   r^{\frac{d}{2}} \d r \\
 &
= \sum_{n=0}^{\infty}  \frac{ {(2\pi)}^{\frac{d}{2}} (-\i)^n }{\rho^{\frac{d-2}2}}  \sum_{\ell=1}^{a_n^d}  Y_{\ell}^{n} (\hat \bxi)\mathscr{H}_{n+\frac{d-2}{2}}^d [f^n_{\ell}](\rho).
  \end{split}
  \end{equation*}
  This ends the proof.
\end{proof}
\subsection{Ball polynomials: \!orthogonal polynomials on $\mathbb B^d$}\label{secP}
For any $\alpha>-1$, we define the ball polynomials as
\begin{equation}\label{orthnPkl}
P_{k,\ell}^{\af,n}(\bx)={P}_{k}^{(\af,n+\frac{d}2-1)}(2\|\bx\|^2-1) Y_{\ell}^{n}(\bx), \quad \bx\in \ball^d, \;\;  \ell \in \Upsilon_n^d,\;\; k,n\in {\mathbb N}_0.
\end{equation}
Note that the total degree of  $P_{k,\ell}^{\af,n}(\bs x)$ is $n+2k$ for any $\ell\in \Upsilon_n^d$.
The ball polynomials are mutually orthogonal
with respect to the weight function $\varpi_{\af}(\bx):=(1-\|\bx\|^2)^{\af}$ (cf. {\cite[Propostion 11.1.13]{Dai2013}}):
\begin{equation}
\label{orthP}
(P_{k,\ell}^{\af,n},P_{j,\iota}^{\af,m} )_{\varpi_{\af}} =\delta_{nm}\delta_{kj}\delta_{\ell \iota}, \quad \ell \in \Upsilon_n^d,\;\; \iota \in \Upsilon_m^d,\;\; k,j,m,n\in {\mathbb N}_0,
\end{equation}
where the inner product $(\cdot,\cdot)_{\varpi_{\af}} $ is defined by
\begin{equation*}
(f,g)_{\varpi_{\af}}:=\int_{\ball^d}f(\bx)g(\bx)\varpi_{\af}(\bx)\,\d\bx.
\end{equation*}


\begin{lemma}[{\cite[Theorem 11.1.5]{Dai2013}}]\label{dxbop} The ball orthogonal polynomials 
are the eigenfunctions of the differential operator:
\begin{equation}
\label{Ldef}
\mathscr{L}_{\bx}^{(\af)}P_{k,\ell}^{\af,n}(\bs x):=\left(-\Delta+\nabla \cdot \bx(2\af+\bx\cdot\nabla)-2\af d\right) P_{k,\ell}^{\af,n}(\bs x)=\gamma_{n+2k}^{(\af)}P_{k,\ell}^{\af,n}(\bs x),
\end{equation}
where $\gamma_m^{(\af)}:=m(m+2\af+d).$
\end{lemma}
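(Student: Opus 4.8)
The plan is to verify directly that the ball polynomials $P_{k,\ell}^{\af,n}$ diagonalize the operator $\mathscr{L}_{\bx}^{(\af)}$ by exploiting the separation of variables $P_{k,\ell}^{\af,n}(\bx)={P}_{k}^{(\af,n+\frac{d}2-1)}(2r^2-1)\,Y_{\ell}^{n}(\bx)$ with $r=\|\bx\|$, reducing the action of the three-dimensional-looking operator to the known one-variable Jacobi Sturm--Liouville problem \eqref{JacobiSL} plus the Laplace--Beltrami eigenrelation \eqref{eq:LaplaceBeltrami}. First I would rewrite $\mathscr{L}_{\bx}^{(\af)}$ in a more tractable form. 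Writing out the term $\nabla\cdot\bx(2\af+\bx\cdot\nabla)$ using the product rule and the identities $\nabla\cdot\bx = d$ and $\bx\cdot\nabla(r^2)=2r^2$, I would combine it with $-\Delta$ and the constant $-2\af d$. Recalling from \eqref{eq:Delta} that $\Delta=\partial_r^2+\frac{d-1}{r}\partial_r+\frac1{r^2}\Delta_0$ and that $\bx\cdot\nabla=r\partial_r$ in spherical coordinates, the operator $\mathscr{L}_{\bx}^{(\af)}$ separates into a purely radial part acting on ${P}_{k}^{(\af,n+\frac{d}2-1)}(2r^2-1)$ and the angular Laplace--Beltrami part acting on $Y_{\ell}^{n}$.

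Next I would substitute the eigenrelation \eqref{eq:LaplaceBeltrami}, namely $\Delta_0 Y_{\ell}^{n}=-n(n+d-2)Y_{\ell}^{n}$, so that the $\frac1{r^2}\Delta_0$ contribution becomes the explicit scalar factor $-n(n+d-2)/r^2$ multiplying $Y_{\ell}^{n}$. This reduces the eigenproblem to showing that the radial operator, acting on $Q(r):={P}_{k}^{(\af,n+\frac{d}2-1)}(2r^2-1)$, returns $\gamma_{n+2k}^{(\af)}Q(r)$ after accounting for the $Y_\ell^n$ factor and the homogeneity $Y(\bx)=r^nY(\hat\bx)$. The crucial computational step is the change of variables $\eta=2r^2-1$ (equivalently $r^2=(1+\eta)/2$), under which $\partial_r=2r\cdot 2\,\partial_\eta = 4r\partial_\eta$; this converts the radial second-order operator into the Jacobi operator $\mathscr{L}_{\eta}^{(\af,\beta)}$ with $\beta=n+\frac d2-1$. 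I would then invoke \eqref{JacobiSL}, which gives the Jacobi eigenvalue $\lambda_k^{(\af,\beta)}=k(k+\af+\beta+1)=k\bigl(k+\af+n+\frac d2\bigr)$, and assemble the pieces to produce the claimed eigenvalue $\gamma_{n+2k}^{(\af)}=(n+2k)(n+2k+2\af+d)$.

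The main obstacle I anticipate is bookkeeping the cross-terms correctly when the full operator acts on the product $Q(r)Y_\ell^n(\bx)$: because $Y_\ell^n(\bx)=r^nY_\ell^n(\hat\bx)$ carries its own $r^n$ factor, the radial derivatives $\partial_r$ and $r\partial_r$ hit both $Q(r)$ and this $r^n$, generating mixed contributions that must be reorganized. The cleanest route is probably to treat the radial function as $R(r)=r^nQ(r)$ from the outset, carry out $-\Delta+\nabla\cdot\bx(2\af+\bx\cdot\nabla)-2\af d$ on $R(r)Y_\ell^n(\hat\bx)$, and only at the end revert to the Jacobi variable $\eta$. A consistency check I would perform is the constant term: collecting all the $n$-dependent and $\af$-dependent constants arising from the Laplace--Beltrami eigenvalue, the $2\af+\bx\cdot\nabla$ term, and the $-2\af d$ shift, and confirming they combine with $4\lambda_k^{(\af,\beta)}$ to give exactly $\gamma_{n+2k}^{(\af)}$. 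Since Lemma~\ref{dxbop} is cited from \cite[Theorem 11.1.5]{Dai2013}, the proof ultimately amounts to this verification, so the only real work is making the separation-of-variables and change-of-variables algebra line up precisely with the stated eigenvalue.
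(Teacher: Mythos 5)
Your proposal is correct and follows essentially the same route the paper itself lays out around this lemma: the paper cites the result from \cite[Theorem 11.1.5]{Dai2013}, but its own machinery---the spherical-polar form \eqref{Ldefr} of $\mathscr{L}_{\bx}^{(\af)}$ from Theorem \ref{thmdx}, the Laplace--Beltrami eigenrelation \eqref{eq:LaplaceBeltrami}, and the change of variables $\eta=2r^2-1$ reducing the radial part to the Jacobi problem \eqref{JacobiSL} via the identity $\tfrac14\bigl(\gamma_{n+2k}^{(\af)}-\gamma_n^{(\af)}\bigr)=\lambda_k^{(\af,\beta_n)}$ in \eqref{Opr}--\eqref{jacobieta}---is exactly the verification you describe. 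Your bookkeeping (treating the radial factor as $r^nQ(r)$, $\partial_r=4r\partial_\eta$, and the factor-of-$4$ assembly of the eigenvalue) is accurate and matches the paper's computation.
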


The Sturm-Liouville operator $\mathscr{L}_{\bs x}^{(\af)}$ takes different
forms, which find more appropriate for the forthcoming derivations.
\begin{thm}\label{thmdx}
For $\alpha>-1$, it holds that
\begin{align}
\label{Ldef2}
\mathscr{L}_{\bx}^{(\af)} =& -(1-\|\bx\|^2)^{-\af}\nabla\cdot({\bs {\rm I}}-\bx\bx^{\tr})(1-\|\bx\|^2)^{\af}\nabla
\\
\label{Ldef3}
=& -(1-\|\bx\|^2)^{-\af} \nabla\cdot  (1-\|\bx\|^2)^{\af+1}\nabla - \Delta_0
\\
\label{Ldefr}
=&-(1-r^2)\partial^2r-\frac{d-1}{r}\partial r+(2\af+d+1)r\partial r-\frac{1}{r^2} \Delta_0,
\end{align}
where $\Delta_0$ is the spherical part of $\Delta$ and involves only derivatives in $\hat \bx.$
\end{thm}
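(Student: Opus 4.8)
The plan is to show that the defining expression \eqref{Ldef} and each of the three right-hand sides \eqref{Ldef2}, \eqref{Ldef3}, \eqref{Ldefr} collapse to one and the same operator in Cartesian variables, after which \eqref{Ldefr} follows by passing to polar-spherical coordinates. Throughout I write $R:=\bx\cdot\nabla=\sum_{i=1}^d x_i\partial_{x_i}$ for the Euler operator and $E:=\sum_{i,j=1}^d x_ix_j\partial_{x_i}\partial_{x_j}$. The whole computation rests on four elementary identities: for a scalar field $g$, $\nabla\cdot(\bx\,g)=d\,g+Rg$; the second-order Euler relation $E=R^2-R$; the weight derivative $\nabla(1-\|\bx\|^2)^\beta=-2\beta(1-\|\bx\|^2)^{\beta-1}\bx$; and the projection identity $\bx^{\tr}({\bs{\rm I}}-\bx\bx^{\tr})=(1-\|\bx\|^2)\bx^{\tr}$. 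The claim is that all four operators equal
\[
\mathscr{L}_{\bx}^{(\af)}=-\Delta+(2\af+d+1)\,R+E. \qquad (\star)
\]

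First I would verify $(\star)$ for the defining form \eqref{Ldef}. Applying $\nabla\cdot(\bx g)=d g+Rg$ with $g=(2\af+R)u$, together with $R[(2\af+R)u]=2\af Ru+R^2u=2\af Ru+(E+R)u$, gives $\nabla\cdot[\bx(2\af+R)u]=2\af d\,u+(2\af+d+1)Ru+Eu$; subtracting $2\af d\,u$ and $\Delta u$ then yields $(\star)$. Next, for \eqref{Ldef2} I expand the weighted divergence by the product rule with $\varpi_{\af}=(1-\|\bx\|^2)^{\af}$: the term from differentiating the weight is, by the projection identity, $\nabla\varpi_{\af}\cdot({\bs{\rm I}}-\bx\bx^{\tr})\nabla u=-2\af\varpi_{\af}Ru$, while $({\bs{\rm I}}-\bx\bx^{\tr})\nabla u=\nabla u-\bx\,Ru$ gives $\nabla\cdot[({\bs{\rm I}}-\bx\bx^{\tr})\nabla u]=\Delta u-(d+1)Ru-Eu$ using the first two identities; combining and multiplying by $-\varpi_{\af}^{-1}$ reproduces $(\star)$.

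For \eqref{Ldef3}, the same product rule gives $-\varpi_{\af}^{-1}\nabla\cdot[(1-\|\bx\|^2)^{\af+1}\nabla u]=2(\af+1)Ru-(1-\|\bx\|^2)\Delta u$; then I insert $\|\bx\|^2\Delta=r^2\Delta=E+(d-1)R+\Delta_0$ (which follows from \eqref{eq:Delta} together with $E=r^2\partial_r^2$), whereupon the two $\Delta_0$ contributions cancel against the explicit $-\Delta_0$ and $(\star)$ emerges once more. Finally, to obtain \eqref{Ldefr} I rewrite $(\star)$ in polar-spherical coordinates: $R=r\partial_r$, $E=r^2\partial_r^2$ by the second-order Euler relation, and $\Delta=\partial_r^2+\frac{d-1}{r}\partial_r+\frac{1}{r^2}\Delta_0$ by \eqref{eq:Delta}; collecting the coefficients of $\partial_r^2$, $\partial_r$ and $\Delta_0$ gives exactly \eqref{Ldefr}.

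The computations are entirely mechanical; the only point requiring genuine care---and the main bookkeeping obstacle---is the consistent use of the second-order Euler identity $E=R^2-R=r^2\partial_r^2$ and of the projection identity $\bx^{\tr}({\bs{\rm I}}-\bx\bx^{\tr})=(1-\|\bx\|^2)\bx^{\tr}$. These are precisely what make the anisotropic matrix ${\bs{\rm I}}-\bx\bx^{\tr}$ in \eqref{Ldef2} and the shifted weight power $(1-\|\bx\|^2)^{\af+1}$ in \eqref{Ldef3} conspire to produce the single first-order coefficient $2\af+d+1$; once they are in hand, every term lines up and the four forms coincide.
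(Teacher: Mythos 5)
Your proof is correct: the normal form $(\star)=-\Delta+(2\af+d+1)R+E$ with $R=\bx\cdot\nabla$ and $E=R^2-R$ does agree with \eqref{Ldef}, \eqref{Ldef2} and \eqref{Ldef3}, and its polar expression is exactly \eqref{Ldefr}; each of your four reductions checks out. Your organization, however, differs genuinely from the paper's. The paper proves \eqref{Ldef2}$\,=\,$\eqref{Ldef} by essentially the same Leibniz-plus-projection computation you use, but it establishes \eqref{Ldef3} by a purely Cartesian, component-by-component rearrangement: writing the matrix ${\bs {\rm I}}-\bx\bx^{\tr}$ out entrywise, it recognizes the off-diagonal contributions as the angular operators $D_{ij}=x_j\partial_{x_i}-x_i\partial_{x_j}$ and invokes the algebraic identity $\Delta_0=\sum_{i<j}D_{ij}^2$ of \eqref{Delta0}, together with the commutativity of $D_{ij}$ with radial functions; only afterwards does it pass to polar coordinates via \eqref{eq:Delta} to obtain \eqref{Ldefr}. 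You instead prove \eqref{Ldef3} by invoking the polar decomposition at once, through $r^2\Delta=E+(d-1)R+\Delta_0$, so your argument leans on \eqref{eq:Delta} (the identification of $\Delta_0$ with the spherical part of $\Delta$) already at that stage, whereas the paper needs only the Cartesian definition \eqref{Delta0} there. Both dependencies are legitimate, since the paper records both facts. What your route buys is uniformity and economy: one normal form in the Euler operators, four mechanical verifications, no index bookkeeping. What the paper's route buys is structure: its component computation produces the intermediate identity
\begin{equation*}
-\nabla\cdot({\bs {\rm I}}-\bx\bx^{\tr})\,\varpi_{\af}\nabla
=-\nabla\cdot\varpi_{\af+1}\nabla-\sum_{1\le i<j\le d} D_{ij}\,\varpi_{\af}\, D_{ij},
\end{equation*}
which displays $\mathscr{L}_{\bx}^{(\af)}$ as a sum of manifestly self-adjoint, negative-semidefinite-generating pieces in divergence form; this is precisely the decomposition exploited later in the quadratic-form identity \eqref{Dself}. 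Your normal form $(\star)$, not being in divergence form, does not exhibit that self-adjoint splitting, so if you continued with your approach you would need a separate argument at that later point.
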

\begin{proof} Using the Leibniz rule for  gradient and divergence, one derives
\begin{equation}\label{Lxaf}
\begin{split}
 -(1-&\|\bx\|^2)^{-\af}\nabla\cdot(\bs{\rm{I}}-\bx\bx^{\tr})(1-\|\bx\|^2)^{\af}\nabla\\
&=-(1-\|\bx\|^2)^{-\af} \left[ (1-\|\bx\|^2)^{\af} \nabla\cdot  (\bs{\rm I}-\bx\bx^{\tr})\nabla-2\af (1-\|\bx\|^2)^{\af-1} \bx^{\tr}   (\bs{\rm I}-\bx\bx^{\tr})\nabla  \right]\\
&= -\nabla\cdot(\bs{\rm I}-\bx\bx^{\tr})\nabla+2\af   \bx\cdot  \nabla
= -\nabla\cdot(\bs{\rm I}-\bx\bx^{\tr})\nabla + 2\af (\nabla\cdot \bx - d)\\
&=-\Delta+\nabla \cdot \bx(2\af+\bx\cdot\nabla)-2\af d,
\end{split}
\end{equation}
which exactly gives \eqref{Ldef2}.

Next, a component by component reduction yields
\begin{align*}
  -(1-&\|\bx\|^2)^{-\af}\nabla\cdot({\bs {\rm I}}-\bx\bx^{\tr})(1-\|\bx\|^2)^{\af}\nabla
  \\
  &= -(1-\|\bx\|^2)^{-\af}\sum_{1\le i\le d} \partial_{x_i}
  \Big[       (1-x_i^2) (1-\|\bx\|^2)^{\af} \partial_{x_i}    - \sum_{1\le j\neq i\le d}  x_ix_j (1-\|\bx\|^2)^{\af}   \partial_{x_j} \Big]
  \\
  &= -(1-\|\bx\|^2)^{-\af} \Big[\sum_{1\le i\le d} \partial_{x_i}
   (1-\|\bx\|^2)^{\af+1} \partial_{x_i}     +\sum_{1\le i\le d}   \sum_{1\le j \neq i\le d}x_j  \partial_{x_i}  (1-\|\bx\|^2)^{\af}    D_{ij}\Big]
  \\
  &= -(1-\|\bx\|^2)^{-\af} \Big[\sum_{1\le i\le d} \partial_{x_i}
   (1-\|\bx\|^2)^{\af+1} \partial_{x_i}     +   \sum_{1\le j < i\le d}  D_{ij} (1-\|\bx\|^2)^{\af}    D_{ij}\Big]
   \\
  & = -(1-\|\bx\|^2)^{-\af} \nabla  \cdot (1-\|\bx\|^2)^{\af+1} \nabla -  \Delta_0,
\end{align*}
where the commutativity
of $D_{ij}$ and $r$ is used in the last step. This verifies \eqref{Ldef3}.

Finally, applying the Leibniz rule once again, one gets
\begin{align*}
\mathscr{L}_{\bx}^{(\af)}  =& -(1-\|\bx\|^2)^{-\af} \Big[  (1-\|\bx\|^2)^{\af+1}  \nabla  \cdot    \nabla   -  2(\alpha+1) (1-\|\bx\|^2)^{\alpha} \bx\cdot\nabla  \Big]  -  \Delta_0
\\
=& -(1-\|x\|^2) \Delta + 2 (\alpha+1) \bx\cdot  \nabla   - \Delta_0
\\
=& -(1-r^2) \Big[ \partial_r^2 + \frac{d-1}{r}\partial_r  +\frac{1}{r^2} \Delta_0 \Big] + 2(\alpha+1) r\partial_r  - \Delta_0
\\
=& -(1-r^2)\partial^2r-\frac{d-1}{r}\partial r+(2\af+d+1)r\partial r-\frac{1}{r^2} \Delta_0,
\end{align*}
where we used the \eqref{eq:Delta} and identity $\bx\cdot  \nabla = r \bs{\hat x} \cdot \nabla =r\partial_r$.
\end{proof}

\def\I{\mathrm{I}}

Thanks to \eqref{eq:LaplaceBeltrami}, we use the form \eqref{Ldefr} of the operator $\mathscr{L}_{\bx}^{(\af)},$ and derive that in $r$-direction,
\begin{equation}\label{Opr}
\begin{split}
\mathscr{L}_{r}^{(\af)}
\big(r^n   P_{k}^{\alpha,n+\frac{d}2-1}(2r^2-1) \big)  =\gamma_{n+2k}^{(\af)}\big(r^nP_{k}^{\alpha,n+\frac{d}2-1}(2r^2-1)  \big),
\end{split}
\end{equation}
where we denote
\begin{equation}\label{LrLrA}
\mathscr{L}_{r}^{(\af)}:=-(1-r^2)\partial_r^2-\frac{d-1}{r}\partial_r+(2\af+d+1)r\partial_r+\frac{n(n+d-2)}{r^2}.
\end{equation}
With a change of variable
 $\eta=2r^2-1$ and denoting  $\beta_n=n+{d}/{2}-1$, we can rewrite \eqref{Opr} as
\begin{equation}\label{jacobieta}
\begin{split}
\mathscr{L}_{\eta}^{(\af,\beta_n)} P_{n}^{(\af,\beta_n)}(\eta) &=-\frac{1}{\omega_{\af,\beta_n}(\eta)}
\partial_{\eta}\big(\omega_{\af+1,\beta_n+1}(\eta)\partial_{\eta}P_{k}^{(\af,\beta_n)}(\eta)\big)
\\
&=\frac{1}{4}(\gamma_{n+2k}^{(\af)}-\gamma_{n}^{(\af)}) P_{k}^{\alpha,\beta_n}(\eta)  =\lambda_k^{(\af,\beta_n)}P_k^{(\af,\beta_n)}(\eta),\quad  \eta\in (-1,1),
\end{split}
\end{equation}
which is exactly  \eqref{JacobiSL}. This indicates a close relation between the $r$-component of a ball polynomial and Jacobi polynomials in $x\in (-1,1)$ with parameter varying with $n.$

\section{Ball PSWFs as  eigenfunctions of a Sturm-Liouville operator}\label{secG}

The PSWFs to be introduced can be defined as eigenfunctions of a differential operator or an integral operator. In this section,  we focus on the former approach, and present some important properties from this perspective.


\subsection{Definition of ball PSWFs on $\ball^d$}  For $\alpha>-1,$ we define the second-order differential operator:
\begin{equation}\label{opDxB}
\mathscr{D}_{c,\bx}^{(\af)}:=\mathscr{L}_{\bx}^{(\af)}+ c^2 \|\bx\|^2=-(1-\|\bx\|^2)^{-\af}\nabla\cdot({\bs {\rm I}}-\bx\bx^{\tr})(1-\|\bx\|^2)^{\af}\nabla+ c^2 \|\bx\|^2,
\end{equation}
for $\bx\in \ball^d,$ and real $c\ge 0,$ where the operator $\mathscr{L}_{\bx}^{(\af)}$ is defined  in Lemma \ref{dxbop} with various equivalent forms stated in Theorem \ref{thmdx}.
It is clear that  $\mathscr{D}_{c,\bx}^{(\af)}$ is  a strictly positive self-adjoint operator in the sense that for any $u,v$ in the domain of $\mathscr{D}_{c,\bx}^{(\af)},$ we have
\begin{equation}\label{Dself1}
\big(\mathscr{D}_{c,\bx}^{(\af)} u, v\big)_{\varpi_\af}= \big(u, \mathscr{D}_{c,\bx}^{(\af)}  v\big)_{\varpi_\af},
\end{equation}
and for all $u\not=0,$
\begin{align}\label{Dself}
\big(\mathscr{D}_{c,\bx}^{(\af)} u, u\big)_{\varpi_\af}=\|\nabla u\|_{\varpi_{\af+1}}^2
+ \sum_{1\le i<j\le d} \|D_{ij}u\|^2_{\varpi_\af}   +c^2( \|u\|^2_{\varpi_\af}
-  \|u\|^2_{\varpi_{\af+1}}) >0.
\end{align}
Hence, by the  Sturm-Louville theory (cf.  \cite{Al-Gwaiz07,Codd55}), the operator  $\mathscr{D}_{c,\bx}^{(\af)}$ admits  a countable and infinite set of bounded, analytical eigenfunctions $\{\psi(\bx)\}$ which forms a  complete orthogonal system of $L^2_{\varpi_\af}(\ball^d).$
%
%
In other words, we have
\begin{equation}\label{varphichi}
\mathscr{D}_{c,\bx}^{(\af)}[\psi](\bx)=\chi\, \psi(\bx),\quad  \bx\in \ball^d,
\end{equation}
where $\{\chi:=\chi(c)\}$ are the corresponding eigenvalues.

In view of \eqref{Ldefr}, we can rewrite the operator $\mathscr{D}_{c,\bx}^{(\af)}$ in the spherical-polar coordinates as
\begin{equation*}
\mathscr{D}_{c,\bx}^{(\af)}=\mathscr{L}_{\bs x}^{(\af)}+c^2r^2 = -(1-r^2)\partial^2_r-\frac{d-1}{r}\partial_r+(2\af+d+1)r\partial_r-\frac{1}{r^2} \Delta_0+c^2r^2.
\end{equation*}
We infer from \eqref{orthnPkl} and Lemma \ref{dxbop} that the eigenfunction in \eqref{varphichi} takes the form:
\begin{equation}\label{psi}
\psi(\bx)=r^n \phi_{k}^{\alpha,n}(2r^2-1; c) Y^n_{\ell}(\hat\bx),\quad  \ell\in \Upsilon_n^d,\;\; k,n\in {\mathbb N}.
\end{equation}
In analogy to \eqref{Opr}-\eqref{LrLrA}, the eigen-value problem \eqref{varphichi} in $r$-direction
takes the  equivalent form:
\begin{equation}\label{Oprpsi}
\big(\mathscr{L}_{r}^{(\af)}+c^2r^2\big)
\big(r^n   \phi_{k}^{\alpha,n}(2r^2-1; c) \big)  =\chi_{n,k}^{(\af)}(c) \big(r^n   \phi_{k}^{\alpha,n}(2r^2-1; c) \big).
\end{equation}
Similar to \eqref{jacobieta}, we make  a change of variable $\eta=2r^2-1,$ and find from the above  that
\begin{equation}\label{phieig}
\mathscr{D}_{c,\eta}^{(\af)}\phi_k^{\af,n}(\eta; c)=\frac{1}{4}\big(\chi_{n,k}^{(\af)} (c)-\gamma_{n}^{(\af)}\big)\phi_k^{\af,n}(\eta; c),
\end{equation}
where $\mathscr{D}_{c,\eta}^{(\af)}$ is the second-order differential operator:
\begin{equation}\label{newMbopt}
\mathscr{D}_{c,\eta}^{(\af)}:=\mathscr{L}_{\eta}^{(\af,\beta_n)} +\frac{c^2(\eta+1)}{8}=-\frac{1}{\omega_{\af,\beta_n}(\eta)}
\partial_{\eta}\big(\omega_{\af+1,\beta_n+1}(\eta)\partial_{\eta}\cdot\big)+\frac{c^2(\eta+1)}{8},
\end{equation}
with  $\af>-1, \beta_n=n+d/2-1, \;\eta\in I. $ Note  that $\mathscr{D}_{c,\eta}^{(\af)}$ is  a  symmetric and strictly positive operator. 
According to the general theory of  Sturm-Liouville problems  (cf.  \cite{Al-Gwaiz07,Codd55}),
$\big\{\phi_k^{\af,n}(\eta; c)\big\}_{k=0}^\infty$ forms a  complete orthogonal
system of $L^2_{\omega_{\af,\beta_n}}(I).$
In view of \eqref{psi} and \eqref{phieig}, we can define the  PSWFs of interest as follows.
\vskip 3pt

\begin{defn}\label{BPSWFs} {\bf (Ball PSWFs on ${\mathbb B}^d$).} For real $\alpha>-1$ and real $c\ge 0,$
the prolate spheroidal wave functions
on a $d$-dimensional unit  ball $\ball^d,$ denoted by $\big\{\swf_{k,\ell}(\bx; c)\big\}_{\ell\in \Upsilon_n^d}^{k,n\in {\mathbb N}},$ are  eigenfunctions of the differential operator defined in  $\mathscr{D}_{c,\bx}^{(\af)}$ defined in \eqref{opDxB}, that is,
\begin{equation}\label{varphichi2}
\mathscr{D}_{c,\bx}^{(\af)}\swf_{k,\ell}(\bx; c)=\chi_{n,k}^{(\af)}(c)\, \swf_{k,\ell}(\bx; c),\quad  \bx\in \ball^d,
\end{equation}
where $\big\{\chi_{n,k}^{(\af)}(c)\big\}_{\ell\in \Upsilon_n^d}^{k,n\in {\mathbb N}}$ are the corresponding eigen-values, and $c$ is the bandwidth parameter.
\end{defn}

 We summarize two points  in order.
  In  the spherical-polar coordinates, $\swf_{k,\ell}(\bx;c)$ has a  separated form given by \eqref{psi}, i.e.,
     \begin{equation}\label{psinew}
\swf_{k,\ell}(\bx;c)=r^n \phi_{k}^{\alpha,n}(2r^2-1; c) Y^n_{\ell}(\hat\bx),\quad  \ell\in \Upsilon_n^d,\;\; k,n\in {\mathbb N},
\end{equation}
where $\phi_{k}^{\alpha,n}(\cdot; c)$ satisfies \eqref{Oprpsi}-\eqref{phieig}. On the other hand, if $c=0,$ we find readily from the previous discussions that
\begin{equation}\label{gtog}
\swf_{k,\ell}(\bx; 0)=P_{k,\ell}^{\af,n}(\bx), \quad \phi_{k}^{\alpha,n}(\eta; 0)=P_{k}^{(\alpha,\beta_n)}(\eta),\quad \chi_{n,k}^{(\af)}(0)=\gamma_{n+2k}^{(\af)}.
\end{equation}
 Thus,  the ball PSWF $\swf_{k,\ell}(\bx; c)$ on $\ball^d$ can be viewed as a generalization of the ball polynomial  $P_{k,\ell}^{\af,n}(\bx)$ (cf. Subsection \ref{secP}) with a tuning parameter $c$.



\subsection{Important properties} We present below  some basic properties of  $\swf_{k,\ell}(\bx; c)$
that follows from the Sturm-Louville theory (cf. \cite{Al-Gwaiz07,Codd55}).
\begin{thm}\label{GpswfProp}  For any $c>0$ and $\af>-1$,
\begin{itemize}
\item[(i)] $\big\{\swf_{k,\ell}(\bx; c)\big\}_{\ell\in \Upsilon_n^d}^{k,n\in {\mathbb N}}$ are all real, smooth, and form a complete orthonormal system of
$L^2_{\varpi_{\af}}(\ball^d), $ namely,
\begin{equation}\label{orthnswf}
\int_{\ball^d} \swf_{k,\ell}(\bx; c) \psi^{\af,m}_{j,\iota}(\bx; c)\varpi_{\af} (\bx)\d\bx=\delta_{k,j}\delta_{\ell ,\iota}\delta_{n,m}\,.
\end{equation}

\item[(ii)] $\big\{\chi_{n,k}^{(\af)}(c)\big\}_{k,n\in {\mathbb N}}$ are all real, positive, and
ordered  for fixed $n$ as follows
\begin{equation}\label{eqn_incr}
0<\chi_{n,0}^{(\af)}(c)<\chi_{n,1}^{(\af)}(c)<\cdots<
\chi_{n,k}^{(\af)}(c)<\cdots.
\end{equation}
\item[(iii)] $\big\{\swf_{k,\ell}(\bx; c)\big\}_{\ell\in \Upsilon_n^d}^{k,n\in {\mathbb N}}$  with even $n$ are even functions of $\bx,$ and those with
odd $n$ are odd, namely,
\begin{equation}\label{parity}
\psi_{k,{\ell}}^{\af,n}(-\bx; c)=(-1)^n \psi_{k,{\ell}}^{\af,n}(\bx; c),\quad \forall \bs \;\bx\in \ball^d.
\end{equation}
%
\end{itemize}
\end{thm}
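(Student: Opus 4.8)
The plan is to derive all three properties from the structure already established: the eigenfunctions $\swf_{k,\ell}(\bx;c)$ solve the Sturm--Liouville eigenproblem \eqref{varphichi2} for the self-adjoint, strictly positive operator $\mathscr{D}_{c,\bx}^{(\af)}$, and they admit the separated form \eqref{psinew}. Since the nontrivial analytic content (existence of a complete orthogonal system of bounded analytic eigenfunctions) has already been secured by invoking Sturm--Liouville theory right after \eqref{newMbopt}, the three items are essentially a matter of reading off the standard consequences and tracking the separation of variables carefully.

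For item (i), I would first recall that $\mathscr{D}_{c,\bx}^{(\af)}$ is self-adjoint with respect to $(\cdot,\cdot)_{\varpi_\af}$ by \eqref{Dself1}; hence its eigenfunctions are real (the operator has real coefficients, so real and imaginary parts are separately eigenfunctions) and eigenfunctions belonging to \emph{distinct} eigenvalues are automatically orthogonal. Smoothness (indeed analyticity) follows because the equation is a second-order elliptic equation with analytic coefficients away from the boundary, and the Sturm--Liouville theory guarantees the bounded solutions. The only point needing care is the orthogonality within a fixed degree $n$: the three Kronecker deltas in \eqref{orthnswf} come from three different sources. The $\delta_{n,m}$ and $\delta_{\ell,\iota}$ factors are inherited from the orthonormality \eqref{Yorth} of the spherical harmonics $Y^n_\ell$, after writing the $L^2_{\varpi_\af}(\ball^d)$ inner product in polar coordinates so it factors into an angular integral over $\sph^{d-1}$ and a radial integral. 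The $\delta_{k,j}$ factor (for fixed $n$) then follows from the orthogonality of $\{\phi^{\af,n}_k(\cdot;c)\}_k$ in $L^2_{\omega_{\af,\beta_n}}(I)$, which is precisely the completeness/orthogonality statement for the one-dimensional operator $\mathscr{D}_{c,\eta}^{(\af)}$ recorded after \eqref{newMbopt}; here I must check that the Jacobian of the substitutions $\bx=r\hat\bx$ and $\eta=2r^2-1$ converts the radial weight $r^{d-1}(1-r^2)^\af$ into exactly the Jacobi weight $\omega_{\af,\beta_n}(\eta)$ up to the constant that normalizes the basis. The normalization to $1$ rather than a general constant reflects our choice to take the $\phi^{\af,n}_k$ orthonormal and the $Y^n_\ell$ orthonormal.

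For item (ii), the positivity $\chi_{n,k}^{(\af)}(c)>0$ is immediate from the strict positivity \eqref{Dself}: taking $u=\swf_{k,\ell}$ in \eqref{Dself} gives $\chi_{n,k}^{(\af)}(c)=(\mathscr{D}_{c,\bx}^{(\af)}\swf_{k,\ell},\swf_{k,\ell})_{\varpi_\af}>0$. The strict ordering \eqref{eqn_incr} for fixed $n$ is the classical oscillation/interlacing property of the regular Sturm--Liouville problem in the single radial variable: since $\mathscr{D}_{c,\eta}^{(\af)}$ in \eqref{newMbopt} is a genuine one-dimensional Sturm--Liouville operator with the index $k$ labeling the number of zeros of $\phi^{\af,n}_k$, its eigenvalues are simple and strictly increasing in $k$, and by \eqref{phieig} the $\chi_{n,k}^{(\af)}(c)$ inherit this strict monotonicity. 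I would state this as a direct consequence of the cited Sturm--Liouville references rather than reproving the oscillation theorem.

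For item (iii), the parity is a direct consequence of the separated form \eqref{psinew}. Under $\bx\mapsto-\bx$ one has $r=\|\bx\|$ unchanged and $\hat\bx\mapsto-\hat\bx$, so the radial factor $r^n\phi^{\af,n}_k(2r^2-1;c)$ is invariant, while $Y^n_\ell(-\hat\bx)=(-1)^n Y^n_\ell(\hat\bx)$ because $Y^n_\ell$ is the restriction of a homogeneous harmonic polynomial of degree $n$ (cf.\ \eqref{Ybxcase}). Multiplying the two factors yields \eqref{parity}. I expect the main obstacle to be bookkeeping rather than genuine difficulty: specifically, verifying in item (i) that the radial measure transforms exactly into the normalized Jacobi weight so that the three separate orthogonality facts combine into the single clean identity \eqref{orthnswf}, and making sure the completeness claim in $L^2_{\varpi_\af}(\ball^d)$ is assembled correctly from the completeness of $\{Y^n_\ell\}$ on $\sph^{d-1}$ together with the completeness of $\{\phi^{\af,n}_k\}_k$ on each radial fiber.
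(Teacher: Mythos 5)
Your proposal is correct and follows essentially the same route as the paper, which states Theorem \ref{GpswfProp} without a detailed proof and simply attributes it to Sturm--Liouville theory (cf.\ \cite{Al-Gwaiz07,Codd55}) applied to the self-adjoint, strictly positive operator $\mathscr{D}_{c,\bx}^{(\af)}$ together with the separated form \eqref{psinew}. Your write-up just makes explicit the details the paper leaves implicit: the factorization of \eqref{orthnswf} into the angular orthonormality \eqref{Yorth} and the radial orthogonality of $\{\phi_k^{\af,n}\}$ under the weight $\omega_{\af,\beta_n}$, positivity via \eqref{Dself}, strict ordering from the one-dimensional problem \eqref{phieig}--\eqref{newMbopt}, and parity from the homogeneity of $Y_\ell^n$.
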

We have the following bounds for the eigen-values $\big\{\chi_{n,k}^{(\af)}(c)\big\}_{k,n\in {\mathbb N}}$.
\begin{thm}
 \label{lm:chi}
 For any $\af>-1$ and $c>0$,
\begin{equation}\label{lambda}
(n+2k)(n+2k+2\alpha+d)<\chi_{n,k}^{(\alpha)}(c)<(n+2k)(n+2k+2\alpha+d)+c^2, \quad
n\geq{0}.
\end{equation}
\end{thm}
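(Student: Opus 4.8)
The plan is to reduce the bound to the one–dimensional radial eigenproblem and then apply the Courant--Fischer min--max principle, treating the bandwidth term as a bounded positive perturbation. By the separation of variables \eqref{psinew} and the reduction \eqref{phieig}--\eqref{newMbopt}, the eigenvalue $\chi_{n,k}^{(\af)}(c)$ is tied to the $k$-th eigenvalue $\mu_{n,k}(c):=\tfrac14\big(\chi_{n,k}^{(\af)}(c)-\gamma_n^{(\af)}\big)$ of the one–dimensional operator $\mathscr{D}_{c,\eta}^{(\af)}=\mathscr{L}_{\eta}^{(\af,\beta_n)}+\frac{c^2(\eta+1)}{8}$ on $L^2_{\omega_{\af,\beta_n}}(I)$, with weighted inner product $(u,v)_{\omega_{\af,\beta_n}}=\int_I uv\,\omega_{\af,\beta_n}\,\d\eta$. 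First I would record the purely arithmetic identity $\gamma_{n+2k}^{(\af)}=4\lambda_k^{(\af,\beta_n)}+\gamma_n^{(\af)}$ (which also confirms $\mu_{n,k}(0)=\lambda_k^{(\af,\beta_n)}$, consistent with \eqref{gtog}), so that since $\chi_{n,k}^{(\af)}(c)=4\mu_{n,k}(c)+\gamma_n^{(\af)}$ the claimed bounds \eqref{lambda} are \emph{equivalent} to $\lambda_k^{(\af,\beta_n)}<\mu_{n,k}(c)<\lambda_k^{(\af,\beta_n)}+\tfrac{c^2}{4}$.

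Second, I would observe that on $I=(-1,1)$ the multiplier obeys $0<\frac{c^2(\eta+1)}{8}<\frac{c^2}{4}$, so as quadratic forms $\mathscr{D}_{c,\eta}^{(\af)}$ is sandwiched between $\mathscr{L}_{\eta}^{(\af,\beta_n)}$ and $\mathscr{L}_{\eta}^{(\af,\beta_n)}+\tfrac{c^2}{4}$. Both operators are self-adjoint with purely discrete spectrum (the Sturm--Liouville theory already invoked for Definition \ref{BPSWFs}), so the Courant--Fischer characterization
\[
\mu_{n,k}(c)=\min_{\dim S=k+1}\ \max_{0\ne u\in S}\frac{(\mathscr{D}_{c,\eta}^{(\af)}u,u)_{\omega_{\af,\beta_n}}}{(u,u)_{\omega_{\af,\beta_n}}}
\]
applies, and monotonicity of the Rayleigh quotient in the multiplier immediately yields the weak bounds $\lambda_k^{(\af,\beta_n)}\le\mu_{n,k}(c)\le\lambda_k^{(\af,\beta_n)}+\tfrac{c^2}{4}$.

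The main obstacle is upgrading these to the \emph{strict} inequalities in \eqref{lambda}, since the min--max monotonicity alone only gives $\le$. For the upper bound I would feed the test space $S=\mathrm{span}\{P_0^{(\af,\beta_n)},\dots,P_k^{(\af,\beta_n)}\}$ into the min--max: on this $(k+1)$-dimensional space $(\mathscr{L}_{\eta}^{(\af,\beta_n)}u,u)_{\omega_{\af,\beta_n}}\le\lambda_k^{(\af,\beta_n)}(u,u)_{\omega_{\af,\beta_n}}$, while the strict pointwise bound off the single point $\eta=1$ gives $\int_I\frac{c^2(\eta+1)}{8}|u|^2\omega_{\af,\beta_n}\,\d\eta<\frac{c^2}{4}(u,u)_{\omega_{\af,\beta_n}}$ for every $u\ne0$; maximizing this continuous, everywhere-strict quotient over the compact unit sphere of $S$ keeps it strictly below $\lambda_k^{(\af,\beta_n)}+\tfrac{c^2}{4}$. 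For the lower bound I would instead take the optimal space $S^\star=\mathrm{span}\{\phi_0^{\af,n},\dots,\phi_k^{\af,n}\}$ for $\mathscr{D}_{c,\eta}^{(\af)}$, choose $v^\star\in S^\star$ maximizing the \emph{unperturbed} quotient (so $(\mathscr{L}_{\eta}^{(\af,\beta_n)}v^\star,v^\star)_{\omega_{\af,\beta_n}}\ge\lambda_k^{(\af,\beta_n)}(v^\star,v^\star)_{\omega_{\af,\beta_n}}$ by min--max for $\mathscr{L}_{\eta}^{(\af,\beta_n)}$), and note $\mu_{n,k}(c)=\max_{u\in S^\star}(\cdots)$ dominates the perturbed quotient at $v^\star$, which exceeds $\lambda_k^{(\af,\beta_n)}$ by the strictly positive amount $\int_I\frac{c^2(\eta+1)}{8}|v^\star|^2\omega_{\af,\beta_n}\,\d\eta>0$. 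Finally I would unwind $\chi_{n,k}^{(\af)}(c)=4\mu_{n,k}(c)+\gamma_n^{(\af)}$ together with $\gamma_{n+2k}^{(\af)}=4\lambda_k^{(\af,\beta_n)}+\gamma_n^{(\af)}$ and $4\cdot\tfrac{c^2}{4}=c^2$ to recover \eqref{lambda}. An equivalent, slightly slicker route avoids the $\eta$-reduction altogether and applies the same sandwiching directly to $\mathscr{D}_{c,\bx}^{(\af)}=\mathscr{L}_{\bx}^{(\af)}+c^2\|\bx\|^2$ restricted to the fixed-$(n,\ell)$ spherical-harmonic sector, where $\mathscr{L}_{\bx}^{(\af)}$ has $k$-th eigenvalue $\gamma_{n+2k}^{(\af)}$ and $0<c^2\|\bx\|^2<c^2$ on the interior of $\ball^d$.
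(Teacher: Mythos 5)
Your proof is correct, but it takes a genuinely different route from the paper's. The paper's argument is a Hellmann--Feynman-type computation: it differentiates the eigen-equation \eqref{varphichi2} with respect to $c$, pairs the result with $\swf_{k,\ell}$, and uses the self-adjointness \eqref{Dself1} to annihilate the term involving $\partial_c\swf_{k,\ell}$, leaving the exact identity $\partial_c\chi_{n,k}^{(\af)}(c)=2c\int_{\ball^d}[\swf_{k,\ell}(\bx;c)]^2\|\bx\|^2\varpi_{\af}(\bx)\,\d\bx$; since the normalized eigenfunction makes this integral lie strictly between $0$ and $1$, integrating $0<\partial_c\chi_{n,k}^{(\af)}(c)<2c$ from $0$ to $c$ together with $\chi_{n,k}^{(\af)}(0)=\gamma_{n+2k}^{(\af)}$ (cf. \eqref{gtog}) gives \eqref{lambda}. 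Your Courant--Fischer sandwiching argument replaces this with a purely variational one, and your reduction identity $\gamma_{n+2k}^{(\af)}=4\lambda_k^{(\af,\beta_n)}+\gamma_n^{(\af)}$ checks out, so both the $1$-D route and the direct fixed-$(n,\ell)$-sector route you sketch go through. What the paper's approach buys is strictly more information: an exact formula for $\partial_c\chi_{n,k}^{(\af)}$ and strict monotonicity of the eigenvalue in $c$. What your approach buys is robustness: it avoids the differentiability of the eigenpair in $c$, which the paper invokes without justification (a fully rigorous version needs simplicity of the radial eigenvalues or analytic perturbation theory), and your handling of strictness --- compactness of the unit sphere of the finite-dimensional Jacobi test space for the upper bound, and the a.e.-strictly-positive perturbation evaluated on the optimal space for the lower bound --- is exactly the extra work needed to upgrade the naive $\le$ from form monotonicity to the strict inequalities claimed. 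One point you use implicitly and should cite: the ordering \eqref{eqn_incr} in Theorem \ref{GpswfProp}(ii), which is what identifies $\chi_{n,k}^{(\af)}(c)$ with the $k$-th min--max value in the fixed-$(n,\ell)$ sector; since the paper provides it, your proof is complete.
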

\begin{proof}
Differentiating the equation \eqref{varphichi2}  with
respect to $c$ yields
\begin{equation*}
\big[  \mathscr{D}_{c,\bx}^{(\af)} -\chi_{n,k}^{(\af)}(c)\big]  \big(\partial_{c}\swf_{k,\ell}(\bx;c)\big) = \big(\partial_{c}\chi_{n,k}^{(\af)}(c)-2c\|\bx\|^2\big)\swf_{k,\ell}(\bx;c).
\end{equation*}
Taking the inner product with $\swf_{k,\ell}$ with respect to $\varpi_{\af}$,
and using \eqref{Dself} and \eqref{varphichi2}, we derive
\begin{equation*}
\begin{split}
 \partial_c\chi_{n,k}^{(\af)}(c)-&  2c\int_{\ball^d}[\swf_{k,\ell}(\bx;c)]^2 \|\bx\|^2 \varpi_{\af}(\bx)\d\bx
= \big( \big[  \mathscr{D}_{c,\bx}^{(\af)} -\chi_{n,k}^{(\af)}(c)\big]  \partial_{c}\swf_{k,\ell},   \swf_{k,\ell}\big)_{\varpi_{\af}}
\\
=&\,\big(  \partial_{c}\swf_{k,\ell},  \big[  \mathscr{D}_{c,\bx}^{(\af)} -\chi_{n,k}^{(\af)}(c)\big]  \swf_{k,\ell}\big)_{\varpi_{\af}} = 0.
\end{split}
\end{equation*}
As a result,
$$0<\partial_c\chi_{n,k}^{(\af)}(c)=2c\int_{\ball^d}[\swf_{k,\ell}(\bx;c)]^2 \|\bx\|^2 \varpi_{\alpha}(\bx)\d\bx < 2c\int_{\ball^d}[\swf_{k,\ell}(\bx;c)]^2  \varpi_{\alpha}(\bx)\d\bx=2c,$$
which implies
$$
0<\chi_{n,k}^{(\af)}(c)-\chi_{n,k}^{(\af)}(0) =\chi_{n,k}^{(\af)}(c) - (n+2k)(n+2k+2\af+d) <c^2.$$
This ends the proof.
\end{proof}

For   $0<c\ll 1,$  the PSWF $\swf_{k,\ell}(\bx;c)$ is a small perturbation of the ball
polynomial $P_{k,\ell}^{\af,n}(\bx).$
\begin{thm}\label{cpertu} For $0<c\ll 1,$ we have
\begin{equation*}
\swf_{k,\ell}(\bx;c)=P_{k,\ell}^{\af,n}(\bx)+O(c^2),\quad  \chi^{(\af)}_{n,k}(c)=\gamma_{2n+k}^{(\af)}+O(c^2),\quad k,n\in {\mathbb N}.
\end{equation*}
\end{thm}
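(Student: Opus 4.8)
The plan is to realize $\swf_{k,\ell}(\cdot;c)$ as an analytic perturbation of the ball polynomial $P_{k,\ell}^{\af,n}$ in the \emph{natural} small parameter $\epsilon:=c^2$, which is exactly why the corrections come out as $O(c^2)$ rather than $O(c)$. The eigenvalue half of the statement needs no new work: since $\gamma_{n+2k}^{(\af)}=(n+2k)(n+2k+2\af+d)$, the two-sided bound \eqref{lambda} of Theorem \ref{lm:chi} reads $\gamma_{n+2k}^{(\af)}<\chi_{n,k}^{(\af)}(c)<\gamma_{n+2k}^{(\af)}+c^2$, which is precisely $\chi_{n,k}^{(\af)}(c)=\gamma_{n+2k}^{(\af)}+O(c^2)$. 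So the genuine content is the eigenfunction estimate, and to it I would devote the rest of the argument.

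I would first reduce the eigenfunction claim to a one-dimensional radial problem. By the separated form \eqref{psinew} together with $P_{k,\ell}^{\af,n}(\bx)=r^n P_k^{(\af,\beta_n)}(2r^2-1)Y_\ell^n(\hat\bx)$, the common angular factor $r^nY_\ell^n(\hat\bx)$ cancels, so it suffices to show $\phi_k^{\af,n}(\eta;c)=P_k^{(\af,\beta_n)}(\eta)+O(c^2)$ in $L^2_{\omega_{\af,\beta_n}}(I)$, where $\phi_k^{\af,n}(\cdot;c)$ solves the radial eigenproblem \eqref{phieig} governed by $\mathscr{D}_{c,\eta}^{(\af)}=\mathscr{L}_{\eta}^{(\af,\beta_n)}+\tfrac{c^2(\eta+1)}{8}$ of \eqref{newMbopt}, and $\phi_k^{\af,n}(\eta;0)=P_k^{(\af,\beta_n)}(\eta)$ by \eqref{gtog}.

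This reduction is the crucial step because it removes the degeneracy, which is the main obstacle. The eigenvalue $\gamma_{n+2k}^{(\af)}$ of the full operator $\mathscr{L}_{\bx}^{(\af)}$ is highly degenerate (shared by all $P_{k',\ell'}^{\af,n'}$ with $n'+2k'=n+2k$), and a naive perturbation expansion of the $d$-dimensional eigenfunction would stall on this. The saving feature is that the perturbing potential $c^2\|\bx\|^2$ is radial, hence leaves each spherical-harmonic sector invariant; within the fixed sector the reduced eigenvalues $\{\tfrac14(\chi_{n,k}^{(\af)}(c)-\gamma_n^{(\af)})\}_k$ are \emph{simple} and strictly increasing by Theorem \ref{GpswfProp}(ii). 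On $L^2_{\omega_{\af,\beta_n}}(I)$ we are therefore perturbing a \emph{simple}, isolated eigenvalue $\lambda_k^{(\af,\beta_n)}$ of the self-adjoint Jacobi operator $\mathscr{L}_{\eta}^{(\af,\beta_n)}$ (which has compact resolvent and discrete spectrum $\lambda_j^{(\af,\beta_n)}=j(j+\af+\beta_n+1)$) by the bounded multiplication operator $V:=\tfrac{\eta+1}{8}$, $0\le V\le\tfrac14$. This is a self-adjoint holomorphic family of type (A) in $\epsilon=c^2$, so after fixing the sign by $(\phi_k^{\af,n}(\cdot;c),P_k^{(\af,\beta_n)})_{\omega_{\af,\beta_n}}>0$, Kato--Rellich theory (with the Sturm--Liouville spectral facts from \cite{Al-Gwaiz07,Codd55}) makes both the eigenvalue and the normalized eigenfunction real-analytic in $\epsilon$ near $0$.

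Finally I would extract the first-order behaviour, preferably by a self-contained estimate that avoids quoting Kato. Setting $e:=\phi_k^{\af,n}(\cdot;c)-P_k^{(\af,\beta_n)}$ and $\delta\mu:=\tfrac14(\chi_{n,k}^{(\af)}(c)-\gamma_{n+2k}^{(\af)})=O(c^2)$, subtracting the two radial equations gives $(\mathscr{L}_{\eta}^{(\af,\beta_n)}-\lambda_k^{(\af,\beta_n)})e=-c^2 V\phi_k^{\af,n}(\cdot;c)+\delta\mu\,\phi_k^{\af,n}(\cdot;c)$, whose right-hand side has $L^2_{\omega_{\af,\beta_n}}$-norm $O(c^2)$. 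Splitting $e=e_\parallel+e_\perp$ with $e_\parallel=(e,P_k^{(\af,\beta_n)})_{\omega_{\af,\beta_n}}P_k^{(\af,\beta_n)}$, the normalization and sign convention force $e_\parallel=O(\|e\|^2)$, while projecting the difference equation onto $\{P_k^{(\af,\beta_n)}\}^{\perp}$ and inverting $\mathscr{L}_{\eta}^{(\af,\beta_n)}-\lambda_k^{(\af,\beta_n)}$ there—bounded because $\lambda_k^{(\af,\beta_n)}$ is isolated with a spectral gap—yields $\|e_\perp\|=O(c^2)$. Combining, $\|e\|^2\le Cc^4+C\|e\|^4$, and since $\|e\|\to0$ by continuity this bootstraps to $\|e\|=O(c^2)$. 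Reinstating the angular factor $r^nY_\ell^n(\hat\bx)$ then gives $\swf_{k,\ell}(\bx;c)=P_{k,\ell}^{\af,n}(\bx)+O(c^2)$, and the analyticity from the previous paragraph upgrades this to a uniform pointwise bound if desired; the leading coefficient $\mu^{(1)}=(VP_k^{(\af,\beta_n)},P_k^{(\af,\beta_n)})_{\omega_{\af,\beta_n}}>0$ is consistent with the lower bound in \eqref{lambda}.
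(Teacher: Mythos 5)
Your proposal is correct, but it reaches the theorem by a genuinely different route from the paper. The paper follows Slepian's formal Bouwkamp-type scheme: it \emph{postulates} power-series expansions of $\phi_k^{\af,n}(\eta;c)$ and $\chi_{n,k}^{(\af)}(c)$ in $c^2$, writes the $j$-th correction as a short Jacobi expansion $Q_{k,j}^{\af,n}=\sum_{h=-j}^{j}B_{h,k}^{\af,n}(j)P_{k+h}^{(\af,\beta_n)}$, substitutes into the radial equation \eqref{phieig}, and equates powers of $c^2$; the Jacobi equation \eqref{JacobiSL} and the three-term recurrence \eqref{Jacobi} then produce the explicit first-order data $d_{k,1}^{\af,n}=\tfrac12\big(b_k^{(\af,\beta_n)}+1\big)$ and $B_{\pm 1,k}^{\af,n}$. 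That buys explicit correction terms (in line with the paper's computational theme), but it is formal: existence and convergence of the postulated series are never addressed. Your argument is the rigorous counterpart: the eigenvalue half follows for free from Theorem \ref{lm:chi}, since \eqref{lambda} is exactly $0<\chi_{n,k}^{(\af)}(c)-\gamma_{n+2k}^{(\af)}<c^2$ (a shortcut the paper does not exploit even though that bound is proved immediately beforehand), and the eigenfunction half is genuine spectral perturbation theory --- reduction to the radial sector where the unperturbed Jacobi eigenvalue $\lambda_k^{(\af,\beta_n)}$ is simple and isolated, bounded perturbation $c^2(\eta+1)/8$, Kato--Rellich analyticity or your gap/bootstrap estimate --- at the price of producing no explicit coefficients. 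Your normalized first-order coefficient $\big(VP_k^{(\af,\beta_n)},P_k^{(\af,\beta_n)}\big)$ indeed agrees with the paper's $d_{k,1}^{\af,n}$ up to the factor $4$ coming from \eqref{phieig}. Two small caveats: (i) your final inequality $\|e\|\le \tfrac12\|e\|^2+Cc^2(1+\|e\|)$ admits a spurious large-$\|e\|$ branch, so the appeal to continuity of $c\mapsto\phi_k^{\af,n}(\cdot;c)$ (i.e., to the Kato analyticity you set up earlier) is genuinely needed to select the small branch, not merely cosmetic; (ii) the $\gamma_{2n+k}^{(\af)}$ in the statement is a typo for $\gamma_{n+2k}^{(\af)}$, and your reading, consistent with \eqref{gtog} and Theorem \ref{lm:chi}, is the correct one.
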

\begin{proof}
Following the  perturbation scheme in \cite{Slep64}, we expand the eigen-pair
$\big\{\chi_{n,k}^{(\af)}(c),\phi^{\af,n}_{k}(\eta;c)\big\}$ in series of $c^2:$
\begin{equation}\label{pertexp}
\begin{split}
&\phi^{\af,n}_{k}(\eta;c)=P_{k}^{(\af,\beta_n)}(\eta)+\sum_{j=1}^\infty c^{2j}Q^{\af,n}_{k,j}(\eta); \quad \chi_{n,k}^{(\af)}(c)=\gamma_{2n+k}^{(\af)}+\sum_{j=1}^\infty c^{2j}d_{k,j}^{\af,n},
\end{split}
\end{equation}
where $\gamma_{2n+k}^{(\af)}=\chi_{n,k}^{(\af)}(0)$ (cf. \eqref{gtog}), and
\begin{equation}\label{Q}
\begin{split}
&Q_{k,j}^{\af,n}(\eta)=\sum_{h=-j}^{j}B_{h,k}^{\af,n}(j)P_{k+h}^{(\af,\beta_n)}(\eta),
\end{split}
\end{equation}
with the convectional choice $B_{0,k}^{\af,n}=0.$
Hence, substituting the expansion \re{pertexp} into the eigen-equation \eqref{phieig},
and  equating to zero the coefficients of distinct powers of
$c^2,$ we find the equation corresponding to the coefficient of $c^2$ is
\begin{equation*}
\big( 8 \mathscr{L}_{\eta}^{(\af,\beta_n)}-2\gamma_{n+2k}^{(\af)}+2\gamma_{n}^{(\af)}\big)Q_{k,1}^{\af,n}(\eta)+\big(\eta+1-2d_{k,1}^{\af,n}\big)P_{k}^{(\af,\beta_n)}(\eta)=0.
\end{equation*}
Hence, using the expansion \eqref{Q},  the eigen equation \eqref{JacobiSL},  and the three-term recurrence \eqref{Jacobi},
we find
\begin{equation*}
\begin{split}
\big[8(\lambda_{k+1}^{(\af,\beta_n)} -&\lambda_{k}^{(\af,\beta_n)} ) B_{1,k}^{\af,n}+ a_k^{(\af,\beta_n)}\big]  P_{k+1}^{(\af,\beta_n)}
+\big[8(\lambda_{k-1}^{(\af,\beta_n)} -\lambda_{k}^{(\af,\beta_n)}) B_{-1,k}^{\af,n} + a_{k-1}^{(\af,\beta_n)}\big] P_{k-1}^{(\af,\beta_n)}
\\
+& (b_k^{(\af,\beta_n)}+1-2d_{k,1}^{\af,n}\big)P_{k}^{(\af,\beta_n)} = 0,
\end{split}
\end{equation*}
which implies
\begin{equation*}
d_{k,1}^{\af,n}=\frac{b_k^{(\af,\beta_n)}+1}{2},  \quad
 B_{1,k}^{\af,n}=- \frac{a_k^{(\af,\beta_n)} }{8(2k+\af+\beta_n+2)  },\quad
  B_{-1,k}^{\af,n}=\frac{a_{k-1}^{(\af,\beta_n)} }{8(2k+\af+\beta_n)  } =-B_{1,k-1}^{\af,n} .
\end{equation*}
Thus  we obtain  
\begin{equation*}
\chi_{n,k}^{(\af)}(c)=\gamma_{2n+k}^{(\af)}+c^2 d_{k,1}^{\af,n}+O(c^4).
\end{equation*}
and
\begin{align*}
&\phi^{\af,n}_{k}(\eta;c) = P_{k}^{(\af,\beta_n)}(\eta) +c^2\big(B_{-1,k}^{\af,n} P_{k-1}^{(\af,\beta_n)}(\eta)+B_{1,k}^{\af,n} P_{k+1}^{(\af,\beta_n)}(\eta) \big)+O(c^4),
\\
&\swf_{k,\ell}(\bx;c)=P_{k,\ell}^{\af,n}(\bx)+c^2\big(B_{-1,k}^{\af,n}  P_{k-1,\ell}^{\af,n}(\bx)+B_{1,k}^{\af,n} P_{k+1,\ell}^{\af,n}(\bx)\big)+O(c^4).
\end{align*}
This ends the proof.
\end{proof}

\section{Ball PSWFs  as  eigenfunctions of  finite Fourier transform}\label{sect4}
In this section, we show that the ball PSWFs  are  eigenfunctions of  a compact (finite) Fourier integral operator.

Define the (weighted) finite Fourier integral operator
 ${\mathscr F}_{c}^{(\af)}:{L^2_{\varpi_{\alpha}}(\ball^d)}\rightarrow
L^2_{\varpi_{\alpha}}(\ball^d)$ by
\begin{equation}\label{bdlim}
{\mathscr F}_{c}^{(\af)}[\phi](\bx)=\int_{\ball^d}\e^{-\ri
c \langle \bx, \bs \tau\rangle}\phi(\bs \tau)\varpi_{\alpha}(\bs \tau)\d\bs \tau,\quad \bx\in\ball^d,\;\; c>0,\;\af>-1,
\end{equation}
where $\varpi_{\af}(\bx)=(1-\|\bx\|^2)^{\af}$  as before.
Note that for $\af= 0$, ${\mathscr F}_{c}^{(\af)}$ is reduced to the finite Fourier transform on the  ball.
From Theorem \ref{th:Ffx}, we have that for $ \bx = r \hat \bx$ with $\hat\bx \in \sph^{d-1},$
\begin{align*}
&{\mathscr F}_{c}^{(\af)}[\phi](\bx)= \sum_{n=0}^{\infty}  \frac{ (2\pi)^{\frac{d}{2}} (-\i)^n }{\rho^{\frac{d-2}2}}  \sum_{\ell=1}^{a_n^d}  Y_{\ell}^{n} (\hat \bx) \widehat {\mathscr{H} } _{n+\frac{d-2}{2}}^d[\phi^n_{\ell}](r),\quad
\end{align*}
where spherical coefficient $\phi_{\ell}^n(r)$ and the finite Hankel transform $\widehat {\mathscr{H} } _{\nu}^d$ are 
\begin{align*}
&\phi_{\ell}^n(r) = \int_{\sph^{d-1}} f(\rho \hat {\bs \tau}) Y^n_{\ell}(\hat {\bs \tau}) \d\s(\hat {\bs \tau}),\quad
\widehat {\mathscr{H} } _{\nu}^d[f](\rho)\equiv\int_0^\infty J_{\nu}(\rho r) f(r)r^{\frac{d}{2}}\d{r},\quad
\end{align*}
for $\rho\ge 0,\,\nu> -\frac{1}{2}$ and $r>0.$

We  introduce an associated integral operator
${\mathcal Q}_c^{(\af)}: {L^2_{\varpi_{\alpha}}(\ball^d)}\rightarrow
L^2_{\varpi_{\alpha}}(\ball^d),$ defined by
\begin{equation}\label{OpQff}
{\mathcal Q}_c^{(\af)}=({{\mathscr F}_c^{(\alpha)}})^{*}\circ {\mathscr F}_c^{(\alpha)},\quad
 c>0,\;\af>-1.
\end{equation}
\begin{thm}
\label{th:Q}
Let $c>0,\af>-1$ and $\phi\in L_{\varpi_{\af}}^2(\ball^d).$ Then we have
\begin{equation}\label{OpQ}
{\mathcal Q}_c^{(\af)}\big[\phi\big](\bx)=\int_{\ball^d} {\mathcal K}_c^{(\af)}(\bx,\bs \tau)\phi(\bs \tau){\varpi_{\af}}(\bs \tau)\d\bs \tau,\quad
\bx\in \ball^d,
\end{equation}
where
\begin{equation}\label{propOpQ}
\begin{split}
{\mathcal K}_c^{(\af)}(\bx,\bs t):&=(2\pi)^{\frac{d}{2}} \frac{\widehat {\mathscr{H}}_{\frac{d-2}{2}}^d
[\omega_{\af,\af}](c\|\bs \tau-\bx\|)}{(c\|\bs \tau-\bx\|)^{\frac{d-2}{2}}}\\
&=\frac{(2\pi)^{\frac{d}{2}}}{(c\|\bs \tau-\bx\|)^{\frac{d-2}{2}}}\int_{0}^1s^{\frac{d}{2}}(1-s^2)^{\af}J_{\frac{d-2}{2}}(cs\|\bs \tau-\bx\|) \d{s}.
\end{split}
\end{equation}
\end{thm}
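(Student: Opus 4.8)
The plan is to write $\mathcal{Q}_c^{(\af)}$ explicitly as a double integral, collapse it to a single integral against a kernel by Fubini's theorem, and then evaluate the resulting inner integral over $\ball^d$ in closed form, exploiting the radial symmetry of the weight $\varpi_\af$ together with Lemma \ref{lm:Ffx}.

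First I would record the adjoint of $\mathscr{F}_c^{(\af)}$ with respect to $(\cdot,\cdot)_{\varpi_\af}$. Since $\mathscr{F}_c^{(\af)}$ in \eqref{bdlim} has kernel $\e^{-\ri c\langle\bx,\bs\tau\rangle}$ against the measure $\varpi_\af\,\d\bs\tau$, a one-line computation of $(\mathscr{F}_c^{(\af)}\phi,\psi)_{\varpi_\af}$ shows that $(\mathscr{F}_c^{(\af)})^*$ has kernel $\e^{+\ri c\langle\bx,\bs y\rangle}$ against the same measure. Composing as in \eqref{OpQff} and interchanging the order of integration then gives
\begin{equation*}
\mathcal{Q}_c^{(\af)}[\phi](\bx)=\int_{\ball^d}\Big(\int_{\ball^d}\e^{\ri c\langle\bx-\bs\tau,\bs y\rangle}\varpi_\af(\bs y)\,\d\bs y\Big)\phi(\bs\tau)\varpi_\af(\bs\tau)\,\d\bs\tau,
\end{equation*}
so that $\mathcal{K}_c^{(\af)}(\bx,\bs\tau)=\int_{\ball^d}\e^{\ri c\langle\bx-\bs\tau,\bs y\rangle}\varpi_\af(\bs y)\,\d\bs y$. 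The interchange is legitimate because $\ball^d$ is bounded, $\varpi_\af\in L^1(\ball^d)$ for $\af>-1$, the exponentials are bounded by $1$, and $\phi\in L^2_{\varpi_\af}(\ball^d)$, so the double integrand is absolutely integrable.

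Next I would evaluate this inner integral, which depends on $\bx,\bs\tau$ only through $\bs w:=\bx-\bs\tau$. Writing $\bs y=s\hat{\bs y}$ with $s\in(0,1)$, $\hat{\bs y}\in\sph^{d-1}$, and $\bs w=\rho\hat{\bs w}$ with $\rho=\|\bs w\|$, I split off the angular part
\begin{equation*}
\int_{\sph^{d-1}}\e^{\ri cs\rho\langle\hat{\bs w},\hat{\bs y}\rangle}\,\d\s(\hat{\bs y}),
\end{equation*}
which is exactly the case $n=0$ of Lemma \ref{lm:Ffx} (the degree-zero spherical harmonic being constant); it equals $(2\pi)^{d/2}(cs\rho)^{-(d-2)/2}J_{(d-2)/2}(cs\rho)$. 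Here I would note that the sign of the exponent is immaterial because, by the series \eqref{Jexpan}, the map $z\mapsto J_{\nu}(z)/z^{\nu}$ is even, so $J_{(d-2)/2}(w)/w^{(d-2)/2}$ depends only on $|w|$. Reassembling with the radial factor $s^{d-1}\,\d s$ and simplifying $s^{d-1}/s^{(d-2)/2}=s^{d/2}$ produces exactly the second line of \eqref{propOpQ} with $\rho=\|\bs\tau-\bx\|$. Recognizing the resulting radial integral $\int_0^1 J_{(d-2)/2}(c\rho s)(1-s^2)^\af s^{d/2}\,\d s$ as the finite Hankel transform $\widehat{\mathscr{H}}^d_{(d-2)/2}[\omega_{\af,\af}](c\rho)$, where $\omega_{\af,\af}(s)=(1-s^2)^\af$ is supported in $[0,1]$, then yields the first line of \eqref{propOpQ}.

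The computation is essentially mechanical; the one point needing care is the reduction of the angular integral through Lemma \ref{lm:Ffx} and the attendant handling of the exponent's sign via the parity of $J_\nu(z)/z^\nu$. I would also remark that although Lemma \ref{lm:Ffx} is stated for $w>0$, the factor $J_{(d-2)/2}(c\rho s)/(c\rho s)^{(d-2)/2}$ extends to an even entire function of its argument, so the kernel is well defined, with only a removable singularity, as $\rho=\|\bx-\bs\tau\|\to 0$; indeed the integral form in \eqref{propOpQ} is manifestly finite everywhere on $\ball^d\times\ball^d$.
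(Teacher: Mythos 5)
Your proposal is correct and follows essentially the same route as the paper's own proof: write ${\mathcal Q}_c^{(\af)}=({\mathscr F}_c^{(\af)})^{*}\circ{\mathscr F}_c^{(\af)}$ as an integral operator with kernel $\int_{\ball^d}\e^{\ri c\langle\bx-\bs\tau,\bs s\rangle}\varpi_\af(\bs s)\,\d\bs s$, pass to spherical-polar coordinates, and evaluate the angular integral by the $n=0$ case of Lemma \ref{lm:Ffx}. Your additional care about the adjoint computation, the Fubini interchange, the sign of the exponent via the evenness of $J_\nu(z)/z^\nu$, and the removable singularity at $\|\bx-\bs\tau\|=0$ fills in details the paper leaves implicit.
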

\begin{proof} By \eqref{bdlim}, we have
\begin{equation}\label{cal1}
\begin{split}
\big(({{\mathscr F}_c^{(\alpha)}})^{*}\circ {\mathscr F}_c^{(\alpha)}\big)\big[\phi\big](\bx)&
=\displaystyle\int_{\ball^d}{\mathcal K}_c^{(\af)}(\bx,\bs \tau)\phi(\bs \tau) \varpi_\alpha(\bs \tau)\d\bs \tau,
\end{split}
\end{equation}
where
\begin{equation*}
{\mathcal K}_c^{(\af)}(\bx,\bs \tau)=\int_{\ball^d}e^{\ri c\langle\bx-\bs\tau,\bs s\rangle} \varpi_\alpha(\bs s) \d\bs s.
\end{equation*}
Using the spherical-polar coordinates $\bs s=s{\bs{ \hat{s}}}, \,  {\bs{ \hat{s}}}\in \sph^{d-1}, s\ge 0$, we derive from \eqref{eq:Ffx} that
\begin{equation*}
\begin{split}
\int_{\ball^d}\e^{\ri c\langle\bx-\bs\tau,\bs s\rangle} \varpi_\alpha(\bs s) \d\bs s
&=\int_{0}^1s^{d-1}(1-s^2)^{\af}\d s \int_{\mathbb{S}^{d-1}}\e^{\ri cs\langle \bx-\bs \tau,\hat {\bs s}\rangle}  \d \sigma (\hat {\bs s})\\
&=\frac{(2\pi)^{\frac{d}{2}}}{(c\|\bs \tau-\bx\|)^{\frac{d-2}{2}}}\int_{0}^1s^{\frac{d}{2}}(1-s^2)^{\af}J_{\frac{d-2}{2}}(cs\|\bs \tau-\bx\|) \d{s}.
\end{split}
\end{equation*}
This ends the proof.
\end{proof}
The following theorem indicates  that the ball PSWFs are  eigenfunctions of both  ${\mathscr F}_c^{(\alpha)}$ and
${\mathcal Q}_c^{(\alpha)}.$

\begin{thm}\label{th:lam} For  $\alpha>-1$ and $c>0,$  the  ball PSWFs are the eigenfunctions of
${\mathscr F}_c^{(\alpha)}:$ 
\begin{equation}\label{eigen1}
{\mathscr F}_c^{(\alpha)}[\swf_{k,{\ell}}](\bx;c)={(-\ri)}^{n+2k}\lambda_{n,k}^{(\af)}(c)\, \swf_{k,{\ell}}(\bx;c),\quad
\bx\in \ball^d,
\end{equation}
and the eigenvalues
 $\big\{\lambda_{n,k}^{(\af)}(c)\big\}_{k,n\in {\mathbb N}}$  are all real and can be arranged for fixed $n$ as
\begin{equation}\label{orderth}
\lambda_{n,0}^{(\af)}(c)>\lambda_{n,1}^{(\af)}(c)>\cdots>\lambda_{n,k}^{(\af)}(c)>\cdots>0.\;
\end{equation}
Moreover, $\big\{\swf_{k,{\ell}}(\bx;c)\big\}_{\ell\in \Upsilon_n^d}^{k,n\in {\mathbb N}}$ are also the eigenfunctions
of ${\mathcal Q}_{c}^{(\af)}:$
\begin{equation}\label{Qc}
{\mathcal Q}_{c}^{(\af)}[\swf_{k,{\ell}}](\bx;c)=\mu_{n,k}^{(\af)}(c)\,\swf_{k,{\ell}}(\bx;c),
\end{equation}
and the eigenvalues have the relation:
\begin{equation}\label{mulambda}
\mu_{n,k}^{(\alpha)}(c)=
|\lambda_{n,k}^{(\af)}(c)|^2\,.
\end{equation}
\end{thm}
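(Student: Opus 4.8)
The plan is to base the whole statement on a single structural fact: the differential operator $\mathscr{D}_{c,\bx}^{(\af)}$ of \eqref{opDxB} commutes with the integral operator $\mathscr{F}_c^{(\af)}$ of \eqref{bdlim}. Once this is established, the eigenfunction relation \eqref{eigen1} is forced by the spectral simplicity of $\mathscr{D}_{c,\bx}^{(\af)}$ already recorded in \eqref{eqn_incr}, while the realness, the sign, the ordering \eqref{orderth}, and the relation \eqref{mulambda} all follow from the reduction of $\mathscr{F}_c^{(\af)}$ to a finite Hankel transform and from elementary properties of $\mathcal Q_c^{(\af)}$.

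The crux is the kernel identity
\begin{equation*}
\mathscr{D}_{c,\bx}^{(\af)}\,\e^{-\ri c\langle\bx,\bs\tau\rangle}
=\mathscr{D}_{c,\bs\tau}^{(\af)}\,\e^{-\ri c\langle\bx,\bs\tau\rangle},
\end{equation*}
each side acting in the indicated variable. Writing $u=\e^{-\ri c\langle\bx,\bs\tau\rangle}$ and using the reduced form $\mathscr{L}_{\bx}^{(\af)}=-\nabla\cdot({\bs{\rm I}}-\bx\bx^{\tr})\nabla+2\af\,\bx\cdot\nabla$ from \eqref{Lxaf}, the relations $\nabla_{\bx}u=-\ri c\,\bs\tau\,u$ give, after a short computation,
\begin{equation*}
\mathscr{D}_{c,\bx}^{(\af)}u=\Big[-\ri c(2\af+d+1)\langle\bx,\bs\tau\rangle
+c^2\big(\|\bx\|^2+\|\bs\tau\|^2-\langle\bx,\bs\tau\rangle^2\big)\Big]u,
\end{equation*}
which is symmetric under $\bx\leftrightarrow\bs\tau$, proving the identity. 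Substituting it into $\mathscr{D}_{c,\bx}^{(\af)}\mathscr{F}_c^{(\af)}[\phi]$ and transferring the operator from $u$ onto $\phi$ through the self-adjointness \eqref{Dself1} yields $\mathscr{D}_{c,\bx}^{(\af)}\mathscr{F}_c^{(\af)}=\mathscr{F}_c^{(\af)}\mathscr{D}_{c,\bx}^{(\af)}$; the boundary terms produced by the integration by parts vanish because the flux weight $(1-\|\bs\tau\|^2)^{\af+1}$ degenerates on $\sph^{d-1}$ for $\af>-1$.

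Commutativity gives \eqref{eigen1} at once. By Theorem \ref{th:Ffx}/Lemma \ref{lm:Ffx}, $\mathscr{F}_c^{(\af)}$ reproduces each spherical harmonic $Y_\ell^n$ up to the scalar $(-\ri)^n$ and a finite Hankel transform in the radial variable, hence preserves the sector of functions of the form $r^n\phi(2r^2-1)Y_\ell^n$ as in \eqref{psinew}; since $\mathscr{D}_{c,\bx}^{(\af)}$ has simple spectrum on this sector (its radial eigenvalues $\chi_{n,k}^{(\af)}(c)$ are strictly increasing in $k$ by \eqref{eqn_incr}), the two commuting operators share the eigenfunctions $\swf_{k,\ell}$. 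The resulting eigenvalue is $(-\ri)^n\tilde\lambda_{n,k}$, and $\tilde\lambda_{n,k}$ is real because it is an eigenvalue of the real symmetric Bessel kernel in \eqref{hphi}; writing $\tilde\lambda_{n,k}=(-1)^k\lambda_{n,k}^{(\af)}(c)$ recasts the scalar as $(-\ri)^{n+2k}\lambda_{n,k}^{(\af)}(c)$ with $\lambda_{n,k}^{(\af)}$ real. For $\mathcal Q_c^{(\af)}$, realness of $\swf_{k,\ell}$ gives $(\mathscr{F}_c^{(\af)})^{*}\swf_{k,\ell}=\overline{(-\ri)^{n+2k}\lambda_{n,k}^{(\af)}}\,\swf_{k,\ell}$, whence \eqref{OpQff} yields $\mathcal Q_c^{(\af)}[\swf_{k,\ell}]=|\lambda_{n,k}^{(\af)}(c)|^2\swf_{k,\ell}$, which is \eqref{Qc}--\eqref{mulambda}.

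It remains to settle the sign and the ordering \eqref{orderth}. Testing \eqref{eigen1} against $\swf_{k,\ell}$ and expanding the exponential, $(\mathscr{F}_c^{(\af)}[\swf_{k,\ell}],\swf_{k,\ell})_{\varpi_\af}=\sum_{m\ge0}\frac{(-\ri c)^m}{m!}\sum_{|\bs j|=m}\binom{m}{\bs j}\big|\!\int_{\ball^d}\bx^{\bs j}\swf_{k,\ell}\,\varpi_\af\,\d\bx\big|^2$; since $\swf_{k,\ell}(\cdot;c)=P_{k,\ell}^{\af,n}+O(c^2)$ (Theorem \ref{cpertu}) is orthogonal to every polynomial of degree below $n+2k$, the sum starts at $m=n+2k$ with a strictly positive coefficient (else $P_{k,\ell}^{\af,n}$ would be $\varpi_\af$-orthogonal to itself), giving $\lambda_{n,k}^{(\af)}(c)\sim c_{n,k}\,c^{\,n+2k}$ with $c_{n,k}>0$. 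Thus $\lambda_{n,k}^{(\af)}>0$ for small $c$; since $\mathscr{F}_c^{(\af)}$ is injective (its image extends to an entire function of $\bx$, so a vanishing transform forces all moments of $\phi\varpi_\af$, hence $\phi$, to vanish), we get $\mu_{n,k}^{(\af)}=|\lambda_{n,k}^{(\af)}|^2>0$, so $\lambda_{n,k}^{(\af)}$ never vanishes and stays positive for all $c>0$ by continuity. The distinct leading powers $c^{\,n+2k}$ also give \eqref{orderth} for small $c$. The main obstacle is to propagate the strict ordering to all $c>0$: this requires that the eigenvalues within a fixed sector never coalesce, which I would obtain by showing the reduced Bessel kernel of \eqref{hphi} is an oscillation (sign-regular) kernel and matching it, via Gantmacher--Krein theory, against the fact that $\phi_k^{\af,n}$ has exactly $k$ sign changes in $(0,1)$.
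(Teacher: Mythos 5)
Your treatment of \eqref{eigen1}, \eqref{Qc} and \eqref{mulambda} follows the paper's own proof almost verbatim: the $\bx\leftrightarrow\bs\tau$ symmetry of $\mathscr{D}_{c,\bx}^{(\af)}\e^{-\ri c\langle\bx,\bs\tau\rangle}$ is exactly \eqref{Dexpst}, the transfer onto the eigenfunction via the self-adjointness \eqref{Dself1} is the same chain of identities, and the argument that $\mathscr{F}_c^{(\af)}[\swf_{k,\ell}]$ is an eigenfunction of $\mathscr{D}_{c,\bx}^{(\af)}$ with eigenvalue $\chi_{n,k}^{(\af)}(c)$ and angular part $Y_\ell^n$, hence a multiple of $\swf_{k,\ell}$ with a real factor (real kernel, real eigenfunction), is precisely \eqref{Freduc}--\eqref{Fmulti} and \eqref{ITphi}. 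The passage to $\mathcal{Q}_c^{(\af)}$ via $(\mathscr{F}_c^{(\af)})^{*}[\swf_{k,\ell}]=\ri^{n+2k}\lambda_{n,k}^{(\af)}\swf_{k,\ell}$ is also identical. Where you genuinely deviate is in the proof of positivity: you use a moment expansion of the exponential, whereas the paper applies $\big(\tfrac{1}{4r}\partial_r\big)^{k}$ to \eqref{ITphi}, lets $r\to 0$, and computes the limit \eqref{lambforc} by Jacobi-polynomial identities. Your injectivity argument for $\lambda_{n,k}^{(\af)}(c)\neq 0$ at every $c$ is actually more explicit than the paper's terse ``which is not possible'', and is a nice addition.

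However, your positivity step contains a false claim. You assert that $\swf_{k,\ell}(\cdot;c)$ ``is orthogonal to every polynomial of degree below $n+2k$''. This holds at $c=0$ but not for $c>0$: by Theorem \ref{cpertu} (see its proof), $\swf_{k,\ell}(\cdot;c)=P_{k,\ell}^{\af,n}+c^2\big(B_{-1,k}^{\af,n}P_{k-1,\ell}^{\af,n}+B_{1,k}^{\af,n}P_{k+1,\ell}^{\af,n}\big)+O(c^4)$, and $P_{k-1,\ell}^{\af,n}$ has degree $n+2k-2$, so moments of order $m<n+2k$ are $O(c^2)$, not zero, and your series does not start at $m=n+2k$. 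The conclusion $\lambda_{n,k}^{(\af)}(c)\sim c_{n,k}\,c^{\,n+2k}$ with $c_{n,k}>0$ survives, because a moment of order $m<n+2k$ is in fact $O\big(c^{\,2\lceil (n+2k-m)/2\rceil}\big)$, so the $m$-th term contributes $O\big(c^{\,2(n+2k)-m}\big)=o\big(c^{\,n+2k}\big)$; but that estimate has to be made, and as written the step is wrong.

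The genuine gap is the strict ordering \eqref{orderth}. You establish it only for small $c$ (via the distinct leading powers $c^{\,n+2k}$) and then defer non-coalescence for general $c>0$ to an unexecuted program: you would need to prove that the kernel of \eqref{hphi} is sign-regular \emph{and} that $\phi_k^{\af,n}(\cdot;c)$ has exactly $k$ sign changes for every $c$, neither of which is argued, and the latter is itself a nontrivial Sturm-type statement. So \eqref{orderth} is not established by the proposal. The paper takes a different route here: from \eqref{dkint} with $l=1$ it derives the exact identity \eqref{difflam}, valid for all $c$, linking $\lambda_{n,k}^{(\af)}$ and $\lambda_{n,k+1}^{(\af)}$ through cross-integrals of consecutive radial eigenfunctions, evaluates those integrals as $c\to 0$ by Jacobi connection formulas, and then propagates the ordering using positivity and distinctness. (Admittedly the paper's own continuation to all $c$ is also brief, but it supplies the all-$c$ identity \eqref{difflam}, which is exactly the ingredient your sketch lacks.) To complete your version you must actually carry out the oscillation-kernel/Gantmacher--Krein analysis rather than point to it.
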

\begin{proof} We first prove  \eqref{eigen1}.  Let $\mathscr{D}_{c,\bx}^{(\af)}$ be the
Sturm-Liouville  operator  defined in  \eqref {opDxB}.
 One verifies readily that
\begin{equation}\label{Dexpst}
\begin{split}
\mathscr{D}_{c,\bx}^{(\af)}&\e^{-\ri c \langle \bx, \bs t\rangle}\overset{\re{Ldef2}}=
\big[-\nabla\cdot(\bs{\rm I}-\bx\bx^{\tr})\nabla+2\af  \bx\cdot\nabla + c^2 \|\bx\|^2\big]\e^{-\ri c\langle \bx, \bs t\rangle}
\\
&=\big[c^2\|\bs t\|^2-(2\af+d+1)\ri c   \bx\cdot\bs t -c^2 (\bx\cdot\bs t)^2 + c^2 \|\bx\|^2\big]\e^{-\ri c\langle \bx, \bs t\rangle}=\mathscr{D}_{c,\bs t}^{(\af)}\e^{-\ri c\langle \bx, \bs t\rangle}.
\end{split}
\end{equation}
Thus, we obtain from  \eqref{Dself1}, \eqref{varphichi2}  and \eqref{Dexpst} that
\begin{equation*}
\begin{split}
&\chi_{n,k}^{(\af)} \int_{\ball^d}  \e^{-\ri
c \langle\bx,\bs t\rangle}\psi_{k,{\ell}}^{\alpha,n}(\bs t;c)\varpi_{\af}(\bs t) \d\bs t\overset{\re{varphichi2}}=\int_{\ball^d} \varpi_{\alpha}(\bs t) \e^{-\ri
c\langle\bx,\bs t\rangle}\mathscr{D}_{c,\bs t}^{(\af)}\psi_{k,{\ell}}^{\alpha,n}(\bs t;c)\d\bs t\\
&\qquad \overset{\re{Dself1}} =\int_{\ball^d}\varpi_{\alpha}(\bs t)\psi_{k,{\ell}}^{\alpha,n}(\bs t;c)\mathscr{D}_{c,\bs t}^{(\af)}\e^{-\ri
c\langle\bx,\bs t\rangle}\d\bs t \overset{\re{Dexpst}}=\int_{\ball^d}\varpi_{\alpha}(\bs t)\psi_{k,{\ell}}^{\alpha,n}(\bs t;c)\mathscr{D}_{c,\bx}^{(\af)}\e^{-\ri c\langle\bx,\bs t\rangle}\d\bs t\\
&\qquad =\mathscr{D}_{c,\bx}^{(\af)}\int_{\ball^d}\e^{-\ri c\langle\bx,\bs t\rangle} \psi_{k,{\ell}}^{\alpha,n}(\bs t;c) \varpi_{\alpha}(\bs t) \d\bs t,
\end{split}
\end{equation*}
or equivalently,  
$$
\mathscr{D}_{c,\bx}^{(\af)}\left({\mathscr F}_c^{(\af)}[\swf_{k,{\ell}}]\right)=\chi_{n,k}^{(\af)} {\mathscr F}_c^{(\af)}[\swf_{k,{\ell}}].
$$
This implies ${\mathscr F}_c^{(\af)}[\swf_{k,{\ell}}]$ is an eigenfunctions
of $\mathscr{D}_{c,\bx}^{(\af)}$ corresponding to the eigenvalue $\chi_{n,k}^{(\af)}$.

On the other hand, by resorting to the spherical-polar coordinates
$\bx = r\bs{\hat x}$ and $\bs \tau = \tau \bs{\hat \tau}$
 with $r,\tau\ge 0$ and $\bs{\hat x},\bs{\hat \tau}\in \sph^{d-1}$,  we further deduce that
\begin{equation}
\begin{split}
   {\mathscr F}_{c}^{(\af)} &[\swf_{k,{\ell}} ](\bx)=\int_{\ball^d}\e^{-\ri
c \langle \bx, \bs \tau\rangle}\swf_{k,{\ell}} (\bs \tau)\omega_{\alpha}^{\ball}(\bs \tau)\d\bs \tau
\\
=&\,  \int_0^1 (1-\tau^2)^{\alpha}  \tau^{n+d-1} \phi_{k}^{\alpha,n}(2\tau^2-1; c)  \d{\tau}\
\int_{\sph^{d-1}} \e^{-\ri  c\tau r \langle {\bs{ \hat{x}}}, \hat {\bs \tau} \rangle}  Y^n_{\ell}(\hat{\bs \tau})
 \d\s({\hat {\bs \tau}})
 \\
 \overset{\eqref{eq:Ffx}}=&\, Y_{\ell}^{n}(\hat \bx) \int_0^1 (1-\tau^2)^{\alpha}  \tau^{n+d-1} \phi_{k}^{\alpha,n}(2\tau^2-1; c)   \frac{ (2\pi)^{\frac{d}{2}}\, (-\i) ^n }{(c \tau r)^{\frac{d-2}2}}  J_{n+\frac{d-2}2}(c\tau r)  \d{\tau},
\end{split}\label{Freduc}
\end{equation}
which shows that $ {\mathscr F}_{c}^{(\af)} [\swf_{k,{\ell}} ](\bx)$
has the spherical component $Y_{\ell}^{n}(\hat \bx)$.
Hence, we conclude that ${\mathscr F}_{c}^{(\af)} [\swf_{k,{\ell}} ](\bx)$ is a multiple of
$\swf_{k,{\ell}} (\bx)$ itself. Thus, for certain $\lambda_{n,k,\ell}^{(\af)} $,
\begin{align}
\label{Fmulti}
{\mathscr F}_{c}^{(\af)} &[\swf_{k,{\ell}} ](\bx) = (-\i)^{n}(-1)^k \lambda_{n,k,\ell}^{(\af)} \swf_{k,{\ell}} (\bx) .
\end{align}
Furthermore, a combination of  \eqref{Freduc} and \eqref{Fmulti} yields
\begin{align}
\label{ITphi}
\begin{split}
(2\pi)^{\frac{d}{2}} &c ^n  \int_0^1 (1-\tau^2)^{\alpha}  \tau^{2n+d-1} \phi_{k}^{\alpha,n}(2\tau^2-1; c)     \frac{J_{n+\frac{d-2}2}(c\tau r)}{ (c\tau r)^{n+\frac{d-2}2} }   \d{\tau}
\\
&
= (-1)^k\lambda_{n,k,\ell}^{(\af)}  \phi_{k}^{\alpha,n}(2r^2-1; c)
= (-1)^{k}\lambda_{n,k}^{(\af)} \phi_{k}^{\alpha,n}(2r^2-1; c),
\end{split}
\end{align}
where the second equality sign reveals that $ \lambda_{n,k,\ell}^{(\af)}
= \lambda_{n,k}^{(\af)}= \lambda_{n,k}^{(\af)}(c)$ is  independent of $\ell$.  Thus    \eqref{eigen1} follows
and $\lambda_{n,k}^{(\af)}$ is real.

We now verify \eqref{Qc}.  By \eqref{eigen1}, one readily checks that
\begin{align*}
  ({{\mathscr F}_c^{(\alpha)}})^{*} \big[ \swf_{k,l}\big] (\bx; c) = \i^{n+2k} \lambda_{n,k}^{(\af)}
 \swf_{k,l} (\bx; c).
\end{align*}
Then \eqref{Qc} is a direct consequence of \eqref{eigen1} and the above equation.

 We next verify that $\lambda_{n,k}^{(\af)}(c)>0.$ Applying the differential operator $(\frac1{4r}\partial_r)^l$ on both sides of  \eqref{ITphi},
followed by the recurrence relation \eqref{DJz} of Bessel functions for differentiation 
leads to
\begin{align}\label{dkint}
\begin{split}
{(-1)^l}\frac{(2\pi)^{\frac{d}{2}}c ^{n+2l} }{4^l} &   \int_0^1 (1-\tau^2)^{\alpha}  \tau^{2n+d+2l-1} \phi_{k}^{\alpha,n}(2\tau^2-1; c)     \frac{J_{n+\frac{d-2}2+l}(c\tau r)}{ (c\tau r)^{n+\frac{d-2}2+l} }   \d{\tau}\\
=& {(-1)^k}\lambda_{n,k}^{(\af)}  \Big(\frac{1}{4r}\partial_{r}^l\phi_{k}^{\alpha,n} (2r^2-1; c)\Big)^{l}.
\end{split}
\end{align}
Taking limits as $r\rightarrow 0$ and letting $l=k,$ yields
\begin{align*}
\frac{\pi^{\frac{d}{2}}c ^{n+2k} }{2^{n+3k-1}\Gamma(n+\frac{d}2+k)}    \int_0^1 (1-\tau^2)^{\alpha}  \tau^{2n+d+2k-1} \phi_{k}^{\alpha,n}(2\tau^2-1; c)     \d{\tau}
=\lambda_{n,k}^{(\af)} \partial_{\eta}^k \phi_{k}^{\alpha,n}(-1; c),
\end{align*}
where we used the 
series representation \eqref{Jexpan} of the Bessel function.

Furthermore, changing variables $\eta=2\tau^2-1$ in the above equation  shows that
\begin{align}\label{dkintEqr}
\frac{ \pi^{\frac{d}{2}}c ^{n+2k} }{2^{2n+4k+\frac{d}2+\alpha}\Gamma(n+\frac{d}2+k)}\int_{-1}^1(1+\eta)^k \phi_{k}^{\alpha,n}(\eta; c) \omega_{\alpha,\beta_n}{(\eta)}    \d{\eta}
=\lambda_{n,k}^{(\af)} (c) \partial_{\eta}^k \phi_{k}^{\alpha,n}(-1; c).
\end{align}
Thanks to \eqref{gtog}, we find from \eqref{Jacobiorth} that as $c$ approaches to zero,
\[
\partial_{\eta}^k \phi_{k}^{\alpha,n}(-1; c)\to \partial_{\eta}^k P_k^{(\alpha,\beta_n)}(-1)= k! \, \kappa_k^{(\alpha,\beta_n)} ,
\]
and
\[
\int_{-1}^1(1+\eta)^k\phi_{k}^{\alpha,n}(\eta; c)\omega_{\af,\beta_n}(\eta)\d\eta\to  \int_{-1}^1(1+\eta)^kP_k^{(\alpha,\beta_n)}(\eta)\omega_{\af,\beta_n}(\eta)\d\eta=\frac{2^{\alpha+\beta_n+2}}{\kappa_{k}^{(\alpha,\beta_n)}}.
\]
Hence, a direct calculation by using  \eqref{knkn} and the above two facts leads to
\begin{equation}\label{lambforc}
\lim_{c\rightarrow 0}\frac{\lambda_{n,k}^{(\alpha)}}{c^{n+2k}}=\frac{(\pi)^{\frac{d}{2}}\Gamma(\beta_n+1)h_k^{(\alpha,\beta_n)} }{2^{4k+2n+d+\alpha}\Gamma(k+\beta_n+1)\Gamma(n+\frac{d}2+k)\kappa_{k}^{(\alpha,\beta_n)}}.
\end{equation}
Then, the equation \eqref{lambforc} implies that for sufficient small $c,$
$\lambda_{n,k}^{(\af)}(c)>0$ for all $n, k\ge 0$ and $\af>-1.$  In fact, this property holds for all
$c>0,$ since  if there exists $\tilde c>0$ such that
$\lambda_{n,k}^{(\af)}(\tilde c)<0,$ we are able to find $c_1>0$ such that $\lambda_{n,k}^{(\af)}(c_1)=0, $
which is not possible.

 We are now in a position to justify  \eqref{orderth}. Let  $ \phi_{k}^{\alpha,n}$  and $ \phi_{k+1}^{\alpha,n}$ be the successive eigenfunctions
of \eqref{ITphi}.  Then an immediate consequence of  \eqref{dkint} with $l=1$ gives
\begin{align*}
&\frac{(2\pi)^{\frac{d}{2}} c ^{n+2}}4   \int_0^1 (1-\tau^2)^{\alpha}  \tau^{2n+d+1} \phi_{k}^{\alpha,n}(2\tau^2-1; c)     \frac{J_{n+\frac{d}2}(c\tau r)}{ (c\tau r)^{n+\frac{d}2} }   \d{\tau}
= \lambda_{n,k}^{(\af)}  \big(\phi_{k}^{\alpha,n}\big)'(2r^2-1; c),
\\
&\frac{(2\pi)^{\frac{d}{2}} c ^{n+2}}4    \int_0^1 (1-\tau^2)^{\alpha}  \tau^{2n+d+1} \phi_{k+1}^{\alpha,n}(2\tau^2-1; c)     \frac{J_{n+\frac{d}2}(c\tau r)}{ (c\tau r)^{n+\frac{d}2} }   \d{\tau}
= \lambda_{n,k+1}^{(\af)}  \big(\phi_{k+1}^{\alpha,n}\big)'(2r^2-1; c).
\end{align*}
Multiplying the first equation by
$\phi_{k+1}^{\alpha,n}(2r^2-1;c) \omega_{\alpha}(r^2) r^{2n+d+1}$ and  integrating the resultant equation over $(0,1)$,
we derive from the second  equation above that
\begin{align*}
 \lambda_{n,k}^{(\af)}
 &\int_0^1 \big[  \big(\phi_{k}^{\alpha,n}\big)'(2r^2-1; c)\phi_{k+1}^{\alpha,n} (2r^2-1; c)\omega_{\alpha}(r^2) r^{2n+d+1}\d r
 \\
 =&\frac{(2\pi)^{\frac{d}{2}} c ^{n+2}}4 \int_0^1  \int_0^1  (\tau r)^{2n+d+1} \phi_{k}^{\alpha,n}(2\tau^2-1; c)
  \phi_{k+1}^{\alpha,n} (2r^2-1; c)   \frac{J_{n+\frac{d}2}(c\tau r)}{ (c\tau r)^{n+\frac{d}2} }\omega_{\alpha}(\tau^2) \omega_{\alpha}(r^2)   \d{\tau}   \d{r}
\\
=&\lambda_{n,k+1}^{(\af)}\int_0^1 \big[  \big(\phi_{k+1}^{\alpha,n}\big)'(2r^2-1; c)\phi_{k}^{\alpha,n} (2r^2-1; c)\omega_{\alpha}(r^2) r^{2n+d+1}\d r.
\end{align*}
which gives
\begin{align}
\label{difflam}
  \lambda_{n,k}^{(\af)}- \lambda_{n,k+1}^{(\af)} = \lambda_{n,k}^{(\af)}
  \left( 1 -\frac{   \displaystyle \int_{-1}^1  \big(\phi_{k}^{\alpha,n}\big)'(\eta; c)   \phi_{k+1}^{\alpha,n} (\eta; c)
 \omega^{\alpha,\beta_n+1}(\eta) \d \eta }{   \displaystyle  \int_{-1}^1  \big(\phi_{k+1}^{\alpha,n}\big)'(\eta; c)   \phi_{k}^{\alpha,n} (\eta; c)
 \omega^{\alpha,\beta_n+1}(\eta) \d \eta}   \right).
\end{align}

Now as $c\to 0$, $\phi_{k}^{\alpha,n}(\eta)\to P^{(\alpha,\beta_n)}_k(\eta)$
and $(\phi_{k}^{\alpha,n})'(\eta)\to \partial_{\eta}P^{(\alpha,\beta_n)}_k(\eta)$.
The numerator in \eqref{difflam} approaches
\begin{align*}
\int_{-1}^1 \partial_{\eta} P^{(\alpha,\beta_n)}_k(\eta)(1+\eta) P^{(\alpha,\beta_n)}_{k+1}(\eta)
\omega_{\alpha,\beta_n} \d \eta = 0.
\end{align*}
To estimate the denominator, we resort the following identity,
\begin{align*}
h^{(\alpha,\beta_n)}_{k+1} &\partial_{\eta}P^{(\alpha,\beta_n)}_{k+1}(\eta)(1+\eta)  = \frac{k+\alpha+\beta_n+2}{2} h^{(\alpha+1,\beta_n+1)}_{k} P^{(\alpha+1,\beta_n+1)}_{k}(\eta)(1+\eta)
\\
=&\, \frac{k+\alpha+\beta_n+2}{2k+\alpha+\beta_n+3} \big[ (k+\beta_n+1) h^{(\alpha+1,\beta_n)}_{k}  P^{(\alpha+1,\beta_n)}_{k}(\eta)
+(k+1) h^{(\alpha+1,\beta_n)}_{k+1} P^{(\alpha+1,\beta_n)}_{k+1}(\eta)   \big]
\\
=&\,  \frac{k+\alpha+\beta_n+2}{2k+\alpha+\beta_n+3}
 \Big[ (k+\beta_n+1) \sum_{\nu=0}^k
 \frac{  (\beta_n+\nu+1)_{k-\nu} (\alpha+\beta_n+2\nu+1)   }
 {  (\alpha+\beta_n+\nu+1)_{k+1-\nu} }   h^{(\alpha,\beta_n)}_{\nu} P^{(\alpha,\beta_n)}_{\nu}(\eta)
 \\
 & + (k+1)  \sum_{\nu=0}^{k+1} \frac{  (\beta_n+\nu+1)_{k+1-\nu} (\alpha+\beta_n+2\nu+1)   }
 {  (\alpha+\beta_n+\nu+1)_{k+2-\nu} }   h^{(\alpha,\beta_n)}_{\nu} P^{(\alpha,\beta_n)}_{\nu}(\eta)\Big]
 \\
 =&\,
 \sum_{\nu=0}^{k+1}
 \frac{  (\beta_n+\nu+1)_{k+1-\nu} (\alpha+\beta_n+2\nu+1)   }
 {  (\alpha+\beta_n+\nu+1)_{k+1-\nu} }   h^{(\alpha,\beta_n)}_{\nu} P^{(\alpha,\beta_n)}_{\nu}(\eta)
 ,
\end{align*}
where  the second equality sign is derived from \cite[p.\,71, (4.5.4)]{Szego75} and  the third equality sign is derived from \cite[Theorem 7.1.3]{Askey}. 
As a result, the denominator approches
\begin{align*}
\int_{-1}^1 &\partial_{\eta}P^{(\alpha,\beta_n)}_{k+1}(\eta)(1+\eta) P^{(\alpha,\beta_n)}_{k}(\eta) \omega_{\alpha,\beta_n}(\eta)\d \eta
 \\
&= \int_{-1}^1  \frac{  (\beta_n+k+1) (\alpha+\beta_n+2k+1)   h_{k}^{(\alpha,\beta_n)} }
 {  (\alpha+\beta_n+k+1)   h_{k+1}^{(\alpha,\beta_n)} }
  P^{(\alpha,\beta_n)}_{k}(\eta) P^{(\alpha,\beta_n)}_{k}(\eta) \omega_{\alpha,\beta_n}(\eta)\d \eta
  \\
&  =
2^{\alpha+\beta_n+2}  \sqrt{\frac{(k+1) (k+\beta_n+1)(2k+\alpha+\beta_n+1) (2k+\alpha+\beta_n+3)  }{(k+\alpha+1)(k+\alpha+\beta_n+1)  }}.
\end{align*}
By making $c$ sufficiently small,  the fraction on the right of the \eqref{difflam}
is of absolute values less than unity and
$$
 \lambda_{n,k}^{(\af)}- \lambda_{n,k+1}^{(\af)} = \lambda_{n,k}^{(\af)} (1+ \mathcal{O}(1)) > 0.
$$

Since for $c\neq 0$ and $ \lambda_{n,k}^{(\af)}$ for fixed $n$ are all district and positive,
the ordering in \eqref{orderth} must hold.
\end{proof}

\section{Evaluation of ball PSWFs and connections with some existing PSWFs}\label{sect:5}

In this section, we present an efficient algorithm to evaluate the PSWFs and their associated eigenvalues.  We also illustrate some connections with e.g., circular PSWFs introduced in literature.
\subsection{Spectrally accurate Bouwkamp algorithm}
As with  the Slepian basis,  an efficient approach to evaluate  the PSWFs  is the Bouwkamp-type algorithm (cf. \cite{Bouw50,XiaoH.R01,Boyd.acm}).
We start with  the  differential equation \eqref{varphichi2}  of the PSWFs $\{\psi^{\alpha,n}_{k,\ell}\}$,
which can be regarded as a perturbation of \eqref{Ldefr} for the ball polynomials $\{P^{\alpha,n}_{j,\ell}\}$
 here.
In view of  \eqref{orthnPkl} and \eqref{psinew},  we  can simply expand $\psi^{\alpha,n}_{k,\ell}(\bs x)= \phi^{\af,n}_{k}(2\|\bs x\|^2-1)Y^n_{\ell}(\bs x)$  in  an infinite series in  $\{P^{\alpha,n}_{j,\ell}\}_{j=0}^{\infty}$,
\begin{align}
\label{ExpanPsi}
\begin{split}
\psi^{\alpha,n}_{k,\ell}(\bs x;c)
=  \sum_{j=0}^{\infty}
\beta^{n,k}_{j} P^{\alpha,n}_{j,\ell}(\bs x).
\end{split}
\end{align}
Thanks to  the definition  \eqref{orthnPkl} of the ball polynomials and the  three-term recurrence relation \eqref{Jacobi} of the normalized Jacobi polynomials,
we derive that  for any $\ell \in  \Upsilon_{\!n}^d\, $ and  $n,j\in \NN$,
\begin{align}
\label{TTRBall}
\|\bs x\|^2 P^{\alpha,n}_{j,\ell}(\bs x) = \frac{a^{(\alpha,\beta_n)}_j}{2} P^{\alpha,n}_{j+1,\ell}(\bs x)
+ \frac{1+b^{(\alpha,\beta_n)}_j}{2} P^{\alpha,n}_{j,\ell}(\bs x) + \frac{a^{(\alpha,\beta_n)}_{j-1}}{2} P^{\alpha,n}_{j-1,\ell}(\bs x).
\end{align}
Substituting the  expansion \eqref{ExpanPsi} into \eqref{varphichi2} and using the
three-term recurrence  \eqref{TTRBall}
together with the Sturm-Liouville equation  \eqref{Ldef}, we obtain
\begin{equation*}
\begin{split}
\sum_{j=0}^{\infty}&\bigg[ \Big( \gamma_{n+2j}^{(\af)} +
\frac{ b_{j}^{(\af,\beta_n)} c^2 +c^2}{2} \Big)  \beta_{j}^{n,k}
 + \frac{a_{j-1}^{(\af,\beta_n)}  c^2}{2}  \beta_{j-1}^{n,k}
 + \frac{a_{j}^{(\af,\beta_n)}  c^2}{2}  \beta_{j+1}^{n,k}
- \chi^{(\af)}_{n,k}(c)  \beta_{j}^{n,k}  \bigg]  {P}_{j,\ell}^{\af, n}
= 0.
\end{split}
\end{equation*}
As a result, the expansion coefficients $\{\beta_{j}^{n,k}\}_{j=0}^{\infty}$ in \eqref{ExpanPsi} are determined by
the following three-term recurrence relation:
\begin{equation}
\label{eigsys}
 \Big [  \gamma_{n+2j}^{(\af)} +
 \frac{ (b_{j}^{(\af,\beta_n)}+1)  c^2}{2} - \chi^{(\af)}_{n,k}(c) \Big] \beta_{j}^{n,k}
 + \frac{a_{j-1}^{(\af,\beta_n)}  c^2}{2}  \beta_{j-1}^{n,k}
 + \frac{a_{j}^{(\af,\beta_n)}  c^2}{2}  \beta_{j+1}^{n,k} = 0, \quad j\ge 0.
\end{equation}

\begin{rem}\label{eigenequiv}
The  matrix eigen-problem \eqref{eigsys} can be equivalently deduced  from evaluating the radial component $\phi^{\af,n}_{k}$
of $\psi^{\alpha,n}_{k,\ell}(\bs x)= \phi^{\af,n}_{k}(2\|\bs x\|^2-1)Y^n_{\ell}(\bs x)$  in terms of
 Jacobi polynomials with the unknown coefficients $\{\beta_{j}^{n,k}\}$:
\begin{equation}\label{genexp}
\phi_{k}^{\alpha,n}(\eta; c)=\sum_{j=0}^{\infty}\beta_{j}^{n,k} P_{j}^{(\af,\beta_n)}(\eta).
\end{equation}
Indeed, from \eqref{phieig}, we have
\begin{equation}\label{eigin1dr}
 \bigg[ -\frac{4}{\omega_{\af,\beta_n}(\eta)} \partial_{\eta} \left(\omega_{\af+1,\beta_n+1}(\eta)  \partial_{\eta} \right) +   \frac{c^2(\eta+1)}{2}  +\gamma_{n}^{(\alpha)}  \bigg]\phi_{k}^{\alpha,n}(\eta; c)  =\chi^{(\af)}_{n,k}(c) \,  \phi_{k}^{\alpha,n}(\eta; c).
\end{equation}
Substituting this expansion into \eqref{eigin1dr} and using the
three-term recurrence  \eqref{Jacobi}
together with the Sturm-Liouville equation  \eqref{JacobiSL}, we derive
\begin{equation*}
\begin{split}
\sum_{j=0}^{\infty}&\bigg[ \Big( 4 \lambda_j^{(\af,\beta_n)} +\gamma_{n}^{(\af)} +
\frac{ b_{j}^{(\af,\beta_n)} c^2 +c^2}{2} \Big)  \beta_{j}^{n,k}
 + \frac{a_{j-1}^{(\af,\beta_n)}  c^2}{2}  \beta_{j-1}^{n,k}
 + \frac{a_{j}^{(\af,\beta_n)}  c^2}{2}  \beta_{j+1}^{n,k}
\bigg]  {P}_{j}^{(\af, \beta_n)}(\eta)
\\
&=\chi^{(\af)}_{n,k}(c)  \sum_{j=0}^{\infty}  \beta_{j}^{n,k} P_{j}^{(\af,\beta_n)}(\eta),\quad \eta\in(-1,1).
\end{split}
\end{equation*}
Then we can obtain \eqref{eigsys} from the above.
\end{rem}

Thanks to \eqref{eigsys}, we now use the Bouwkamp-type algorithm to evaluate $\big\{\psi^{\alpha,n}_{k,\ell}, \chi_{n,k}^{(\alpha)}\big\}$ with $2k+n\le N$.
Following  the truncation rule  in \cite{Boyd.acm,wang2009new}, we set $ M=2N+2\alpha+30$ and
suppose   $\big\{\tilde \psi^{\alpha,n}_{k,\ell}, \tilde \chi_{n,k}^{(\alpha)}\big\}$ to be the  approximation of $\big\{{\psi}^{\alpha,n}_{k,\ell}, \chi_{n,k}^{(\alpha)}\big\}$ with
\begin{align*}
 \tilde{\psi}^{\alpha,n}_{k,\ell}(\bs x;c)
= \sum_{j=0}^{\lceil \frac{M-n}{2} \rceil}
\tilde \beta^{n,k}_{j} P^{\alpha,n}_{j,\ell}(\bs x),\quad  2k+n\le N.
\end{align*}
Denote  $K=\lceil \frac{M-n}{2} \rceil$. Then the Bouwkamp-type algorithm gives  the following  finite algebraic eigen-system for $\{ \tilde \beta^{n,k}_{j}\}_{j=0}^K$ and
$\tilde \chi_{n,k}^{(\alpha)}$,
\begin{equation}\label{Feigsys}
({\bs A}-\tilde \chi_{n,k}^{(\alpha)}\cdot{\bs I})\vec{\beta}^{n,k}=\bs 0,
\end{equation}
where
$\vec{\beta}^{n,k}=(\tilde\beta^{n,k}_{0},\tilde\beta^{n,k}_{1},\dots,\tilde\beta^{n,k}_{K})$
and ${\bs A}$ is the $(K+1)\times (K+1)$ symmetric tridiagonal matrix whose nonzero entries are given by
\begin{equation}\label{333term}
\begin{split}
A_{j,j}=\gamma_{n+2j}^{(\af)}+\big(b_{j}^{(\af,\beta_n)}+1\big)\cdot \frac{c^2}{2};\quad  A_{j,j+1}=A_{j+1,j}=a_{j}^{(\af,\beta_n)} \cdot \frac{c^2}{2}, \quad  0\le j \le K.
\end{split}
\end{equation}

We next introduce  a formula to compute  the eigenvalues $\big\{\lambda_{n,k}^{(\alpha)}(c)\big\}$ associated with the integral operator \eqref{bdlim} in very stable manner.
\begin{thm}\label{complamda} For any $\af>-1$ and $c>0,$ we have
\begin{equation}\label{lamcomp}
\lambda_{n,k}^{(\af)}(c)=\frac{\pi^{\frac{d}{2}}c^n \sqrt{\Gamma(\af+1) }}{
   2^{n-\frac{1}{2}}\sqrt{\Gamma(n+\frac{d}{2})\Gamma(\af+n+d/2+1) }}\cdot\frac{\beta_0^{n,k}}{\phi_{k}^{\alpha,n}(-1; c)},
\end{equation}
where $\beta_0^{n,k}$ is given in \eqref{genexp}.
\end{thm}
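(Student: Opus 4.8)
The plan is to read off \eqref{lamcomp} from the integral identity \eqref{ITphi} established inside the proof of Theorem \ref{th:lam}, by evaluating it at the centre of the ball, that is, by letting $r\to 0$ (the case $l=0$ of the differentiation argument used there). Writing $\nu=n+\frac{d-2}2$, the small-argument behaviour of the Bessel function follows from the leading term ($m=0$) of the power series \eqref{Jexpan}, namely $J_\nu(z)/z^\nu\to 1/(2^\nu\Gamma(\nu+1))=1/\big(2^{\,n+\frac{d-2}2}\Gamma(n+\frac d2)\big)$ as $z\to 0$. Since this limit is independent of $\tau$ and the convergence is uniform on $[0,1]$, I would pass it through the $\tau$-integral by dominated convergence, so that the right-hand side of \eqref{ITphi} tends to $(-1)^k\lambda_{n,k}^{(\af)}(c)\,\phi_{k}^{\af,n}(-1;c)$ while the left-hand side tends to
\begin{equation*}
\frac{(2\pi)^{\frac d2}c^n}{2^{\,n+\frac{d-2}2}\Gamma(n+\tfrac d2)}\int_0^1(1-\tau^2)^{\af}\tau^{2n+d-1}\phi_{k}^{\af,n}(2\tau^2-1;c)\,\d\tau .
\end{equation*}

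The next step is to collapse the remaining $\tau$-integral to a single Jacobi moment. With the substitution $\eta=2\tau^2-1$ (so $\d\tau=\d\eta/(4\tau)$, $\tau^{2n+d-2}=((1+\eta)/2)^{\beta_n}$ and $1-\tau^2=(1-\eta)/2$, recalling $\beta_n=n+\frac d2-1$), the integral becomes $2^{-(\af+\beta_n+2)}\int_{-1}^1\omega_{\af,\beta_n}(\eta)\,\phi_{k}^{\af,n}(\eta;c)\,\d\eta$. Inserting the Jacobi expansion \eqref{genexp} and using that $P_0^{(\af,\beta_n)}\equiv 1/h_0^{(\af,\beta_n)}$ is constant, the orthonormality relation \eqref{Jacobiorth} annihilates every term except $j=0$, leaving $\int_{-1}^1\omega_{\af,\beta_n}\phi_{k}^{\af,n}\,\d\eta=2^{\af+\beta_n+2}h_0^{(\af,\beta_n)}\beta_0^{n,k}$; hence the $\tau$-integral equals exactly $h_0^{(\af,\beta_n)}\beta_0^{n,k}$.

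Finally I would assemble the constants and solve for the eigenvalue. Combining the two limits yields
\begin{equation*}
\frac{(2\pi)^{\frac d2}c^n}{2^{\,n+\frac{d-2}2}\Gamma(n+\tfrac d2)}\,h_0^{(\af,\beta_n)}\beta_0^{n,k}=(-1)^k\lambda_{n,k}^{(\af)}(c)\,\phi_{k}^{\af,n}(-1;c),
\end{equation*}
and substituting $h_0^{(\af,\beta_n)}=\big(\Gamma(\af+1)\Gamma(n+\tfrac d2)\big)^{1/2}\big(2\,\Gamma(\af+n+\tfrac d2+1)\big)^{-1/2}$ (the $n=0$ instance of $h_n^{(\af,\beta)}$ with $\beta=\beta_n$, using $(\af+\beta_n+1)\Gamma(\af+\beta_n+1)=\Gamma(\af+n+\tfrac d2+1)$) and collapsing the powers of $2$ via $2^{\frac d2}/2^{\,n+\frac{d-2}2}=2^{1-n}$ reproduces precisely the constant $\pi^{\frac d2}\sqrt{\Gamma(\af+1)}\,/\,\big(2^{\,n-1/2}\sqrt{\Gamma(n+\tfrac d2)\Gamma(\af+n+\tfrac d2+1)}\big)$ appearing in \eqref{lamcomp}. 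The part that requires genuine care here is not any deep analysis but the bookkeeping of the Gamma-function and power-of-two factors, together with the sign: the factor $(-1)^k$ inherited from $(-\i)^{n+2k}$ in \eqref{eigen1}--\eqref{Fmulti} must be matched against the parity of the radial eigenfunction at the centre (so that $(-1)^k\phi_{k}^{\af,n}(-1;c)$ and $\beta_0^{n,k}$ carry matching signs, consistent with the positivity $\lambda_{n,k}^{(\af)}(c)>0$ already proved), which is what produces the clean, sign-definite expression \eqref{lamcomp}; I would state the chosen normalisation of $\phi_{k}^{\af,n}$ explicitly to make this cancellation unambiguous.
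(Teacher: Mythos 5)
Your route is the same as the paper's: evaluate \eqref{ITphi} in the limit $r\to 0$ (the $l=0$ member of the differentiated family \eqref{dkint}), substitute $\eta=2\tau^2-1$, use the expansion \eqref{genexp} and the orthogonality \eqref{Jacobiorth} to collapse the integral to $h_0^{(\alpha,\beta_n)}\beta_0^{n,k}$, and assemble the constants; your Gamma-function and power-of-two bookkeeping is correct. The genuine gap is in your final paragraph, where the sign is handled by assertion rather than proof, and the assertion is in fact backwards. What your computation establishes is
\begin{equation*}
\frac{\pi^{\frac{d}{2}}c^n\sqrt{\Gamma(\alpha+1)}}{2^{n-\frac{1}{2}}\sqrt{\Gamma(n+\frac{d}{2})\,\Gamma(\alpha+n+\frac{d}{2}+1)}}\;\beta_0^{n,k}
=(-1)^k\,\lambda_{n,k}^{(\alpha)}(c)\,\phi_k^{\alpha,n}(-1;c),
\end{equation*}
i.e.\ \eqref{lamcomp} with an extra factor $(-1)^k$, and this factor cannot be absorbed. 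Your own (correct) observation that $\beta_0^{n,k}$ and $(-1)^k\phi_k^{\alpha,n}(-1;c)$ carry matching signs --- forced by the positivity $\lambda_{n,k}^{(\alpha)}(c)>0$ from Theorem \ref{th:lam} --- says precisely that $\beta_0^{n,k}/\phi_k^{\alpha,n}(-1;c)$ has sign $(-1)^k$, so the right-hand side of \eqref{lamcomp} as printed is \emph{negative} whenever $k$ is odd, not equal to $\lambda_{n,k}^{(\alpha)}(c)$. A concrete check: for $k=1$ and $0<c\ll 1$, the perturbation expansion in Theorem \ref{cpertu} gives $\beta_0^{n,1}=c^2B_{-1,1}^{\alpha,n}+O(c^4)>0$, while $\phi_1^{\alpha,n}(-1;c)\to P_1^{(\alpha,\beta_n)}(-1)=-(\beta_n+1)/h_1^{(\alpha,\beta_n)}<0$. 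So the honest conclusion of your argument is \eqref{lamcomp} multiplied by $(-1)^k$; claiming that the parity matching ``produces the clean, sign-definite expression \eqref{lamcomp}'' conflates $\phi_k^{\alpha,n}(-1;c)$ with $(-1)^k\phi_k^{\alpha,n}(-1;c)$ in the denominator. (For what it is worth, the paper's own proof commits the same slip: it invokes \eqref{dkintEqr} ``with $k=0$'', but \eqref{dkintEqr} was derived from \eqref{dkint} with $l=k$, where $(-1)^l$ and $(-1)^k$ cancel; at $l=0$ the surviving $(-1)^k$ is silently dropped. Your more careful bookkeeping exposes this discrepancy rather than reproducing the printed formula, and the correct fix is to state the result with the factor $(-1)^k$, equivalently with $\beta_0^{n,k}/\phi_k^{\alpha,n}(-1;c)$ replaced by its absolute value.)

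A second, smaller omission: you divide by $\phi_k^{\alpha,n}(-1;c)$ without showing it is nonzero, and without this \eqref{lamcomp} is not even well defined. The paper devotes the first part of its proof to exactly this point: evaluating \eqref{phieig} at $\eta=-1$ gives $-2(\beta_n+1)\,\partial_\eta\phi_k^{\alpha,n}(-1;c)=\frac{1}{4}\big(\chi_{n,k}^{(\alpha)}(c)-\gamma_n^{(\alpha)}\big)\phi_k^{\alpha,n}(-1;c)$, so $\phi_k^{\alpha,n}(-1;c)=0$ would force $\partial_\eta\phi_k^{\alpha,n}(-1;c)=0$, and repeated differentiation of the equation then forces every derivative to vanish at $\eta=-1$, hence $\phi_k^{\alpha,n}\equiv 0$, a contradiction. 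Your proof needs this (or an equivalent) argument before the division in the last step is legitimate.
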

\begin{proof} We find  from \eqref{phieig} that
\begin{equation*}
-2(\beta_n+1)\partial_{\eta}\phi_{k}^{\af,n}(-1)=\frac{1}{4}(\chi_{n,k}^{(\af)}-\gamma_{n}^{(\af)})\phi_{k}^{\af,n}(-1).
\end{equation*}
If $\phi_{k}^{\af,n}(-1)$ vanishes, then so does $\partial_\eta\phi_{k}^{\af,n}(-1).$ Differentiating  \eqref{phieig} shows that if $\phi_{k}^{\af,n}(-1)$ and $\partial_\eta\phi_{k}^{\af,n}(-1)$ vanish,  so does $\partial_x^2\phi_{k}^{\af,n}(-1).$ Repeated differentiation implies  that if $\phi_{k}^{\af,n}(-1)=0,$ then
$\phi_{k}^{\af,n}(\eta)\equiv 0.$  This  results in the contradiction,
so we have  $\phi_{k}^{\alpha,n}(-1; c)\not=0$ for any $k\geq 0$ and $n\ge 0$.

Next, we obtain from \eqref{dkintEqr} with $k=0$ that
\begin{equation*}
\frac{\pi^{\frac{d}{2}}{ c}^n}{2^{2n+\af+\frac{d}{2}}\Gamma(n+\frac{d}2)}   \int_{-1}^1
\phi_{k}^{\alpha,n}(\eta; c)\omega_{\af,\beta_n}(\eta)\d\eta=  \lambda_{n,k}^{(\af)}\phi_{k}^{\alpha,n}(-1; c).
\end{equation*}
This yields
\begin{equation*}
\begin{split}
\lambda_{n,k}^{(\af)}(c)&=\frac{\pi^{\frac{d}{2}}c^n }{
   2^{2n+\af+\frac{d}{2}}\Gamma(n+\frac{d}2)\phi_{k}^{\alpha,n}(-1; c)}\int_{-1}^1\phi_{k}^{\alpha,n}(\eta; c)\omega_{\af,\beta_n}(\eta)\d\eta\\
&=\frac{\pi^{\frac{d}{2}}c^n }{
   2^{2n+\af+\frac{d}{2}}\Gamma(n+\frac{d}{2})\phi_{k}^{\alpha,n}(-1; c)}\int_{-1}^1\Big(\sum_{j=0}^\infty
\beta_{j}^{n,k} P_{j}^{(\af,\beta_n)}(\eta)\Big)\om_{\af,\beta_n}(\eta)
d\eta\\
&=\frac{\pi^{\frac{d}{2}}c^n \beta_0^{n,k} h_0^{(\af,\beta_n) }}{
   2^{n-1}\Gamma(n+\frac{d}{2})\phi_{k}^{\alpha,n}(-1; c)}\\
&=\frac{\pi^{\frac{d}{2}}c^n \sqrt{\Gamma(\af+1) }}{
   2^{n-\frac{1}{2}}\sqrt{\Gamma(n+\frac{d}{2})\Gamma(\af+n+d/2+1) }}\cdot\frac{\beta_0^{n,k}}{\phi_{k}^{\alpha,n}(-1; c)}.
\end{split}
\end{equation*}
The proof is now completed.
\end{proof}

\subsection{Connection with existing works}

Below, we particularly look at  the ball PSWFs with $d=1,2$ and special parameter $\alpha$, and demonstrate  their connections with existing PSWFs.


For
$d=1$, one has  $\Upsilon_{\!n}^1=\{ 1\}$ (cf.  \eqref{indexsetA}) for $n=0,1 $ and $\Upsilon_{\!n}^d=\emptyset$ for $n\ge 2$.
Recall the formula in \cite[Theorem 4.1]{Szego75} with a  different normalisation for Jacobi polynomials,
\begin{align*}
&P^{(\alpha,\alpha)}_{2k}(\eta) =2^{\alpha+\frac12}P^{(\alpha,-\frac12)}_{k}(2\eta^2-1) ,
\quad P^{(\alpha,\alpha)}_{2k+1}(\eta) =2^{\alpha+\frac12}\eta P^{(\alpha,\frac12)}_{k}(2\eta^2-1) , \quad k\ge 0.
\end{align*}
Then by Remark \ref{exaples},  
 \begin{align*}
     &P^{\alpha,0}_{k,1} (x)= P^{(\alpha,-\frac12)}_{k}(2x^2-1) Y^0_1(x) = 2^{-\alpha-1} P^{(\alpha,\alpha)}_{2k}(x), \quad k\ge 0,
     \\
     &P^{\alpha,1}_{k,1} (x)= P^{(\alpha,\frac12)}_{k}(2x^2-1) Y^1_1(x) = 2^{-\alpha-1} P^{(\alpha,\alpha)}_{2k+1}(x),  \ \ \quad k\ge 0.
 \end{align*}
 The expansion \eqref{ExpanPsi} is then reduced to
 \begin{align*}
&\psi^{\alpha,0}_{k,1}(x;c)= 2^{-\alpha-1} \sum_{j=0}^{\infty}
\beta^{0,k}_{j} P^{(\alpha,\alpha)}_{2j}(x):=\psi^{(\alpha)}_{2k}(x;c),  \quad\ \  k\ge 0,
\\
&\psi^{\alpha,1}_{k,1}(x;c)= 2^{-\alpha-1} \sum_{j=0}^{\infty}
\beta^{1,k}_{j} P^{(\alpha,\alpha)}_{2j+1}(x):=\psi^{(\alpha)}_{2k+1}(x;c), \quad   k\ge 0.
\end{align*}
It  implies that the Bouwkamp algorithm for $d=1$ here is  exactly reduced to  the even/odd decoupled one in one dimension,
see  \cite{Boyd.acm,Slep61} for $\alpha=0$ and \cite{wang2009new} for general $\alpha>-1$  for details.
In particular,  Boyd \cite{Boyd.acm} suggested a cut-off  $M=2N+30$ for  evaluating  the
 Slepian basis $\{\psi_n^{(0)}\}_{n=0}^N$.  In \cite{wang2009new}, we expand $\{\psi_n^{(\af)}(x;c)\}$ in terms of the normalized Gegenbauer polynomials,
\begin{equation}
{\psi_n^{(\af)}(x; c)}=\sum_{k=0}^{\infty}\beta_k^n\ \gi_k(x)
\quad{\rm with}\quad \beta_k^n=\int_{-1}^1\psi_n^{(\af)}(x;c)G_k^{(\af)}(x)\omega_\af(x)\d x,
\end{equation}
where $\gi_k(x)=2^{-\af-1}P_k^{(\af,\af)}(x),\; k\geq 0.$ Here, we use the truncation $M = 2N + 2\af + 30$ for the computations of
$\{\psi_n^{(\af)}\}_{n=0}^N.$ We also notice that $\beta_k^n = 0$ if $n + k$ is odd, which allows us to obtain a symmetric tridiagonal system, and efficient eigen-solvers can be applied.


To explore the connection  in two dimensions,   we  denote
 $$
 \psi_{n,k}^{(\alpha)}(r;c)=r^{n+\frac{d-1}{2}}\phi_{k}^{\alpha,n}(2r^2-1; c),
 $$
and then transform \eqref{Oprpsi} and  \eqref{ITphi} into
\begin{align}\label{psi0}
\begin{split}
&\Big[-(1-r^2)^{-\alpha}\partial_r(1-r^2)^{\alpha+1}\partial_r
+\frac{(2n+d-1)(2n+d-3)}{4r^2}+c^2r^2\Big]
\psi_{n,k}^{(\alpha)}(r;c)
\\
  =&\Big[\chi_{n,k}^{(\alpha)}(c)+\frac{(d-1)(4\alpha+d+1)}4\Big] \psi_{n,k}^{(\alpha)}(r;c) ,
  \end{split}
\end{align}
 and
 \begin{align}
 \label{psi0Int}
\begin{split}
 \int_0^1  (1-\tau^2)^{\alpha} \psi_{n,k}^{(\alpha)}(\tau; c)  J_{n+\frac{d-2}2}(c\tau r) \sqrt{c\tau r}\,   \d{\tau}
= \frac{c ^{\frac{d-1}{2}} (-1)^{k} }{(2\pi)^{\frac{d}{2}} } \lambda_{n,k}^{(\alpha)}(c)     \psi_{n,k}^{(\alpha)}(r; c),
\end{split}
\end{align}
respectively.   In particular, for  $d=2$ and  $\alpha=0$, we have
\begin{align}\label{psi0Dif20}
&\Big[-\partial_r(1-r^2)\partial_r
+\frac{n^2-\frac14}{r^2}+c^2r^2\Big]
\psi_{n,k}^{(0)}(r;c)
  =\Big[\chi_{n,k}^{(0)}(c)+\frac{3}4\Big] \psi_{n,k}^{(0)}(r;c) ,
\end{align}
and
\begin{align}
 \label{psi0Int20}
& \int_0^1  \psi_{n,k}^{(0)}(\tau; c)  J_{n}(c\tau r) \sqrt{c\tau r}\,   \d{\tau}
= \frac{(-1)^{k} \sqrt{c}  }{ 2\pi  }\, \lambda_{n,k}^{(0)}(c)\,     \psi_{n,k}^{(0)}(r; c).
\end{align}
Indeed,  \eqref{psi0Dif20} defines  the {\em generalized prolate spheroidal wave functions}  $\psi_{n,k}^{(0)}(r;c)$ in two dimensions   in \cite[(25)]{Slep64}.
Slepian \cite{Slep64} expanded $\psi_{n,k}^{(0)}(r;c)$ in a series of hypergeometric functions:
$$
\psi_{n,k}^{(0)}(r;c) = \sum_{j=0}^{\infty} d_j^{n,k}  r^{n+\frac12} {}_2F_1(-j, j+n+1; n+1; r^2),
$$
then used  the Bouwkamp algorithm for  solving \eqref{psi0Dif20}.
Actually, by simply setting
$$
  d_j^{n,k} = (-1)^j\, \binom{j+n}{j}^{-1} \sqrt{\frac{2}{2j+n+1}} \, \beta^{n,k}_j,
$$
one can also obtain   the infinite eigen-system \eqref{eigsys} for $d=2$ and $\alpha=0$.
\begin{rem}\label{with25} More precisely,
we can find the relation between $\{\psi_{n,k}(r;c),\chi_{n,k}(c)\}$ {\rm (cf. \cite{Slep64})}  and $\{\psi_{n,k}^{(\af)}(r;c),\chi_{n,k}^{(\af)}(c)\}$ from \eqref{psi0} and \eqref{ITphi} with $d=2$ and $\af=0,$
\begin{equation}\label{relatA}
\psi_{n,k}(r;c)=\sqrt{r}\psi_{ n,k}^{(0)}(r;c),\quad \chi_{n,k}(c)=\chi_{n,k}^{(0)}(c)+\frac{3}{4},\quad \lambda_{n,k}=c\Big(\sqrt{c}\lambda_{n,k}^{(0)}(c)/2\pi\Big)^2.
\end{equation}
It is seen that the eigen-functions therein are singular at $r=0.$
\end{rem}

While for $d=3$ and $\alpha=0$, Slepian considered  the eigenvalue problem  \eqref{Fmulti} of the finite Fourier transform,
 and then reduced it to
  \begin{align}
 \label{psi0Intd0}
\begin{split}
 \int_0^1  \psi_{n,k}^{(0)}(\tau; c)  J_{n+\frac{d-2}2}(c\tau r) \sqrt{c\tau r}\,   \d{\tau}
= \frac{(-1)^{k} \sqrt{c}  }{ 2\pi  }\,  \frac{ \sqrt{c} \lambda_{n,k}^{(0)}(c)}{\sqrt{ 2\pi  }}\,     \psi_{n,k}^{(0)}(r; c).
\end{split}
\end{align}
After a comparison between \eqref{psi0Intd0} with \eqref{psi0Int20},  Slepian finally evaluated the generalized PSWFs $ \psi_{n,k}^{(0)}(r; c)$ for $d=3$  in the absence of  its Sturm-Liouville differential equation   by solving \eqref{psi0Dif20} with $J_n$
and $\lambda_{n,k}^{(0)}$ replaced by $J_{n+\frac{d-2}2}$ and $ \frac{ \sqrt{c} \lambda_{n,k}^{(0)}}{\sqrt{ 2\pi  }}$,
respectively.


\subsection{Numerical results}
We first  present numerical results obtained from the previously described algorithms for $\chi_{n,k}^{(\af)}(c)$ and $\lambda_{n,k}^{(\af)}(c)$ with $d=2.$ In Table \ref{tab1},  we  tabulate  the numerical results  and compare with  {\cite[Table I]{Slep64}} with  $\alpha=0,$ and  for various choices of the parameters $c, n,k$.  Indeed, we are able to provide many more significant digits, and it shows  our formulation and algorithm in this special case are more stable. 
In Tables \ref{tab2}, we report  the values of $\psi_{n,k}^{(\af)}(r;c)\, (\af=0)$  corresponding to the eigenvalues displayed in Table \ref{tab1}.
For various $c, n$ and $k$, these results are accurate to at least $6$ digits with respect to those were given in {\cite[Table II]{Slep64}}.
On the other hand,  compared with our results with those obtained by the scheme in \cite{Amodio13}, we observe that to achieve same accuracy, the approach in \cite{Amodio13} needed about $10000$ points, while only about $2(n+2k)+30$ points are  required for the method herein.

\begin{table}[!th] \centering
\caption{\small The case $d=2$: $\chi_{n,k}^{(\af)}(c)$ and $\lambda_{n,k}^{(\af)}(c)$ with
$\af=0$.}
\label{tab1}
{\small
\begin{tabular}{lclclclclcl}
  \hline
   $c$ & $n$ &  $k$ & $\chi_{n,k}$ {\cite[Table I]{Slep64}}  & $\chi_{n,k}^{(0)}(c)+\frac{3}{4}$ & $\lambda_{n,k}$ {\cite[Table I]{Slep64}}&$c\Big(\sqrt{c}\lambda_{n,k}^{(0)}(c)/2\pi\Big)^2$ \\
  \hline\hline
$0.1$& $0$  &   $0$ & $7.5499895e-01 $ &  $7.549989583334328e-01$&$2.4968775e-03 $   &$ 2.496877494303882e-03$\\
$0.5$& $0$  &   $0$ & $8.7434899e-01 $ & $8.743489971815857e-01$&$ 6.0585348e-02$   &  $ 6.058534466942055e-02$\\
$1$& $0$  &     $0$& $1.2395933e+00 $ &  $1.239593258779101e+00$&$2.2111487e-01 $   &$2.211148636497345e-01$\\
$4$& $0$  &     $0$& $6.5208586e+00 $ &  $ 6.520858597472127e+00$&$ 9.7495117e-01 $   &$   9.749510755184038e-01$\\
$10$& $0$  &    $0$& $1.8690110e+01$ &  $ 1.869010993969090e+01$&$9.9999957e-01 $   &$ 9.999995234517773e-01$\\
\hline
$2$& $1$  &   $0$ & $6.3394615e+00 $ & $  6.339461594016627e+00$&$1.6123183e-01 $   &$1.612318294915764e-01 $\\
$2$& $1$  &   $1$ & $1.7912353e+01 $ & $ 1.791235348206654e+01$&$1.8549511e-04 $   &$1.854950923417457e-04 $\\
$2$& $1$  &   $2$ & $3.7820310e+01 $ & $ 3.782031001324489e+01 $&$1.9082396e-08 $   &$ 1.908239530607290e-08  $\\
$2$& $1$  &   $3$ & $6.5789319e+01 $ & $ 6.578931995056144e+01 $&$4.9988893e-13 $   &$  4.998888383640053e-13 $\\
$2$& $2$  &   $0$ & $1.1710916e+01 $ & $ 1.171091633298800e+01 $&$1.9088335e-02 $   &$ 1.908833481911065e-02 $\\
\hline
\end{tabular} }
\end{table}

\begin{table}[!th] \centering
\caption{\small The case $d=2:$ $\psi_{n,k}^{(\af)}(r;c)\triangleq r^n\phi_{k}^{\af,n}(2r^2-1;c)$ with
$\af=0.$}
\label{tab2}
{\small
\begin{tabular}{llllllll}
  \hline
$r$ &  $c$ & \vs $\sqrt{r}\psi_{0,0}^{(0)}(r;c)$ &\vs\;\; $\psi_{0,0}^{(0)}(r;c)$({\cite[Table II]{Slep64}})&\;\; $T_{0,0}(r;c)$(\cite{Amodio13}) \\
  \hline\hline
$0.1$ & $1$&   $\;\;\; 4.746377794187660e-01$&\vs$\;\;\;4.74638e-01  $& $\;\;\; 4.7463759e-01 $\\
$0.2$ & $1$&  $\;\;\; 6.687764918417400e-01$&\vs$\;\;\;6.68776e-01 $&$\;\;\;6.6877647e-01 $\\
$0.3$ & $1$&  $\;\;\;8.140701934306384e-01 $&\vs$\;\;\;8.14070e-01 $&$\;\;\;8.1407035e-01 $\\
$0.5$ & $1$&   $\;\;\; 1.030440043954435e+00$&\vs$\;\;\;1.03044e+00 $&$\;\;\;1.0304405e+00 $\\
$0.8$ & $1$&   $\;\;\;1.241572788028936e+00 $&\vs$\;\;\;1.24157e+00 $&$\;\;\;1.2415737e+00 $\\
$1$ & $1$  &   $\;\;\; 1.326266154743105e+00$&\vs$\;\;\;1.32627e+00$&$\;\;\;1.3262673e+00 $\\
\hline
$r$ &  $c$ & \vs $\sqrt{r}\psi_{2,3}^{(0)}(r;c)$ &\vs\;\; $\psi_{2,3}^{(0)}(r;c)${\cite[Table II]{Slep64}}&\;\; $T_{2,3}(r;c)$(\cite{Amodio13}) \\
  \hline\hline

$0.4$ & $1$&  $\;\;\;1.222417855043133e+00 $&\vs$\;\;\;1.22242e+00$&$\;\;\;1.2224159e+00$\\
$0.5$ & $1$&  $\;\;\; 5.021247272944478e-01$&\vs$\;\;\;5.02125e-01$&$\;\;\;5.0212393e-01$\\
$0.6$ & $1$&  $-7.286501244358855e-01 $&\vs$-7.28650e-01$&$-7.2864896e-01$\\

$0.8$ & $2$&  $ -9.788937888170204e-02 $&\vs$-9.78895e-02$&$-9.7889226e-02$\\
$0.9$ & $2$&  $\;\;\; 1.731187946953650e+00 $&\vs$\;\;\;1.73119e+00$&$\;\;\;1.7311852e+00$\\
$1$ & $2$  &  $-4.239904747895277e+00 $     &\vs$-4.23990e+00 $&$-4.2398981e+00 $\\
\hline
\end{tabular} }
\end{table}

%

\begin{figure}
  \centering
  \subfigure[Graph of $\chi_{n,k}^{(0)}(c)$ with $c=20$ and $d=2$.]{
    \includegraphics[width=2.7in]{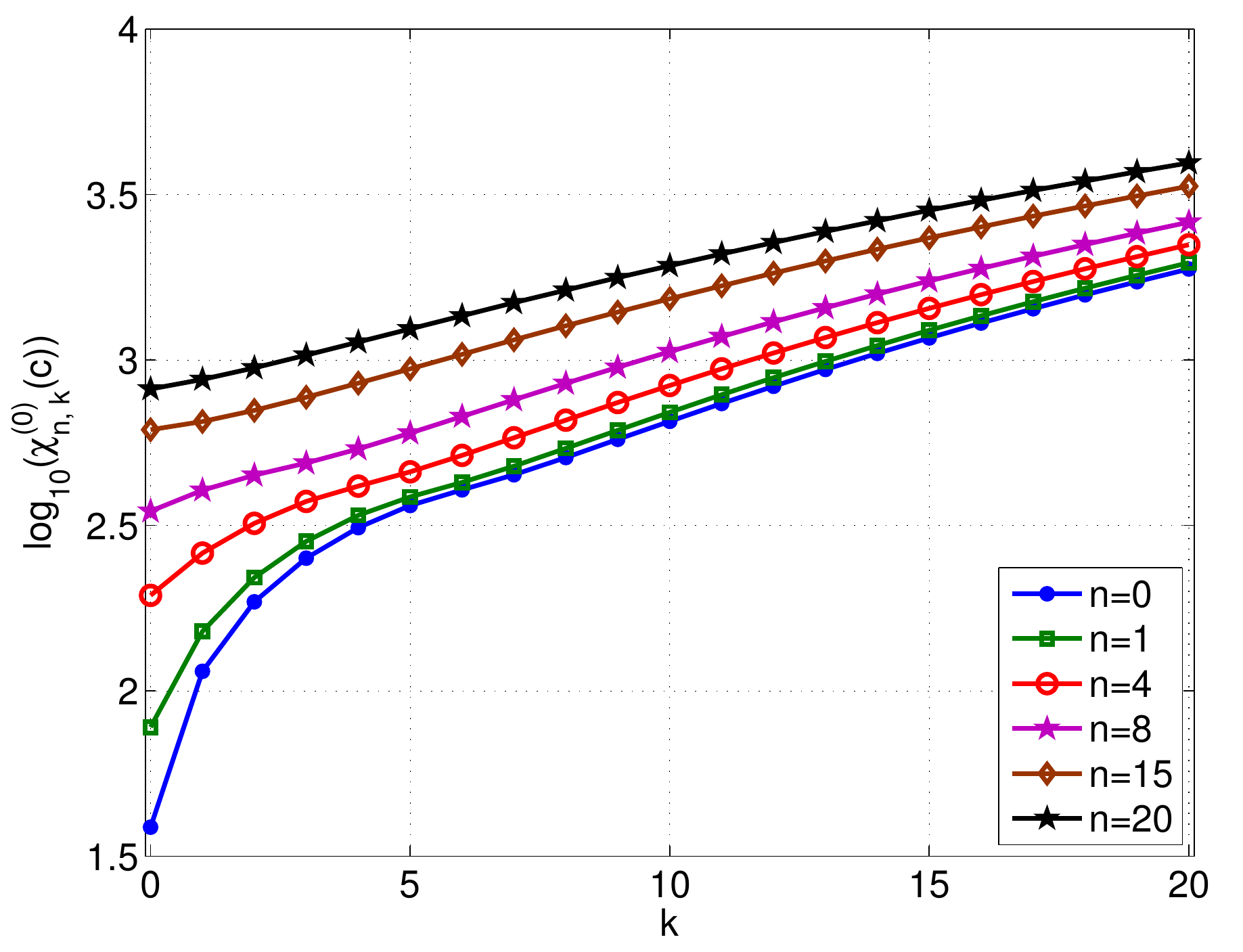}}
  \hspace{0in}
  \subfigure[Graph of $\lambda_{n,k}^{(0)}(c)$ with $c=4$ and $d=2$.]{
    \includegraphics[width=2.7in]{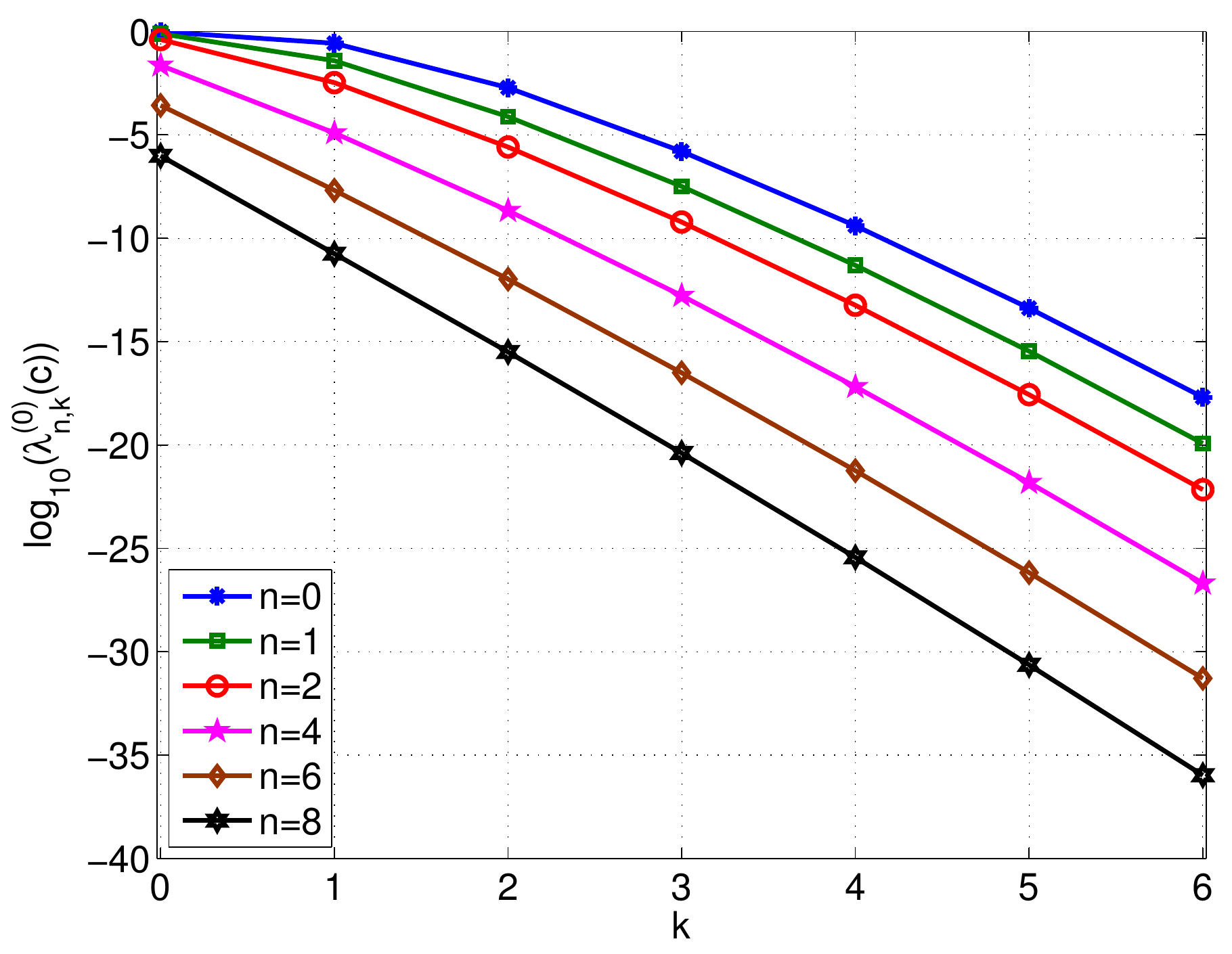}}
  \vfill
  \subfigure[Graph of $\chi_{n,k}^{(1)}(c)$ with $c=10$ and $d=3$.]{
    \includegraphics[width=2.7in]{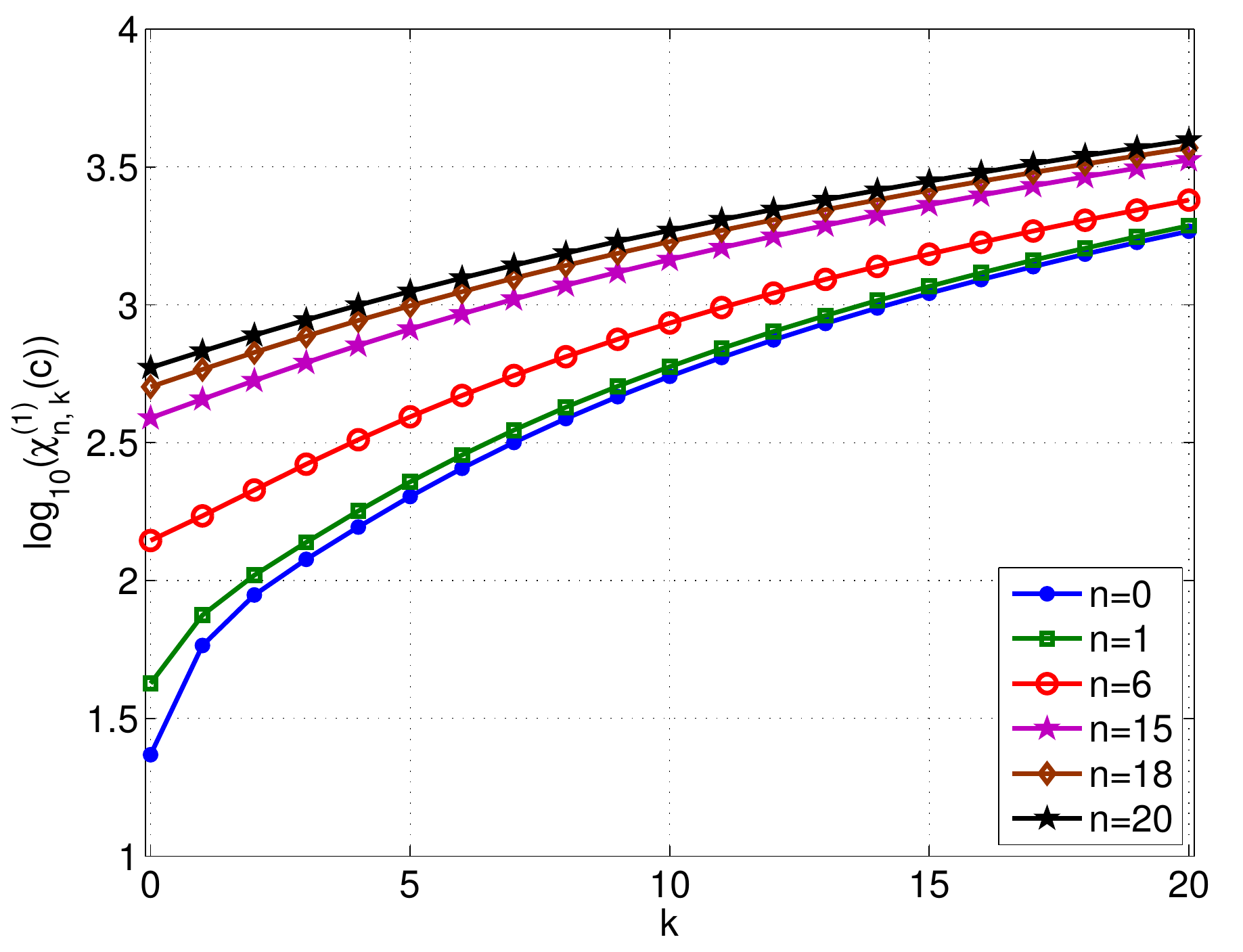}}
  \hspace{0in}
  \subfigure[Graph of $\lambda_{n,k}^{(1)}(c)$ with $c=2$ and $d=3$.]{
    \includegraphics[width=2.7in]{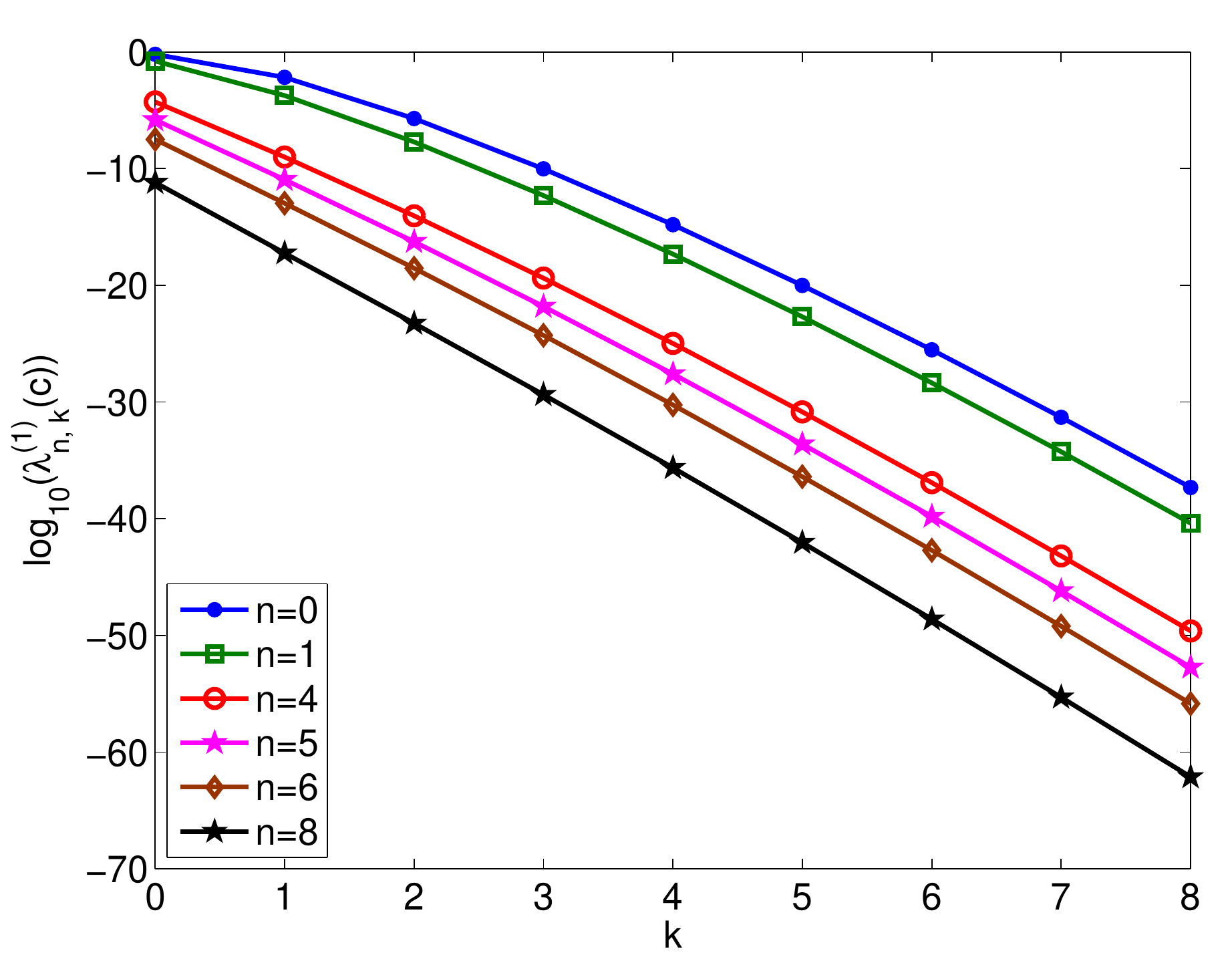}}
  \caption{\small Graphs of $\chi_{n,k}^{(0)}(c)$ and  $\lambda_{n,k}^{(0)}(c).$ }
\label{lamchid2d3}
\end{figure}

In Figure \ref{lamchid2d3} (a)-(b),   we plot $\chi_{n,k}^{(0)}(c)$  and
$\lambda_{n,k}^{(0)}(c)$ versus $k$ in the $2$-dimensional case. It indicates that, for fixed $n$ and $c>0$, $\chi_{n,k}^{(0)}$ becomes larger as $k$ increases,
while $\lambda_{n,k}^{(0)}$ decays exponentially as $k$ grows.  In Figure \ref{psiD2D3} (a)-(b), we depict
the radial component $\psi_{n,k}^{(0)}(r;c)\triangleq r^n\phi_{k}^{0,n}(2r^2-1;c)$ versus $r\in [0,1.5]$ for $n=0,2, k=0,1,2,3$ and $c=2, 10.$ Figures \ref{surfpsi2d1} -\ref{surfpsi2d2}  show  surfaces and contours of $\psi_{k,l}^{\af, n}(x;c)$ with different $c,k,n$ and $l$ with $d=2,\af=0.$


\begin{figure}
  \centering
  \subfigure[Graph of $\psi_{n,k}^{(0)}(r;c)$ with  $c=2$ and $d=2$.]{
    \includegraphics[width=2.7in]{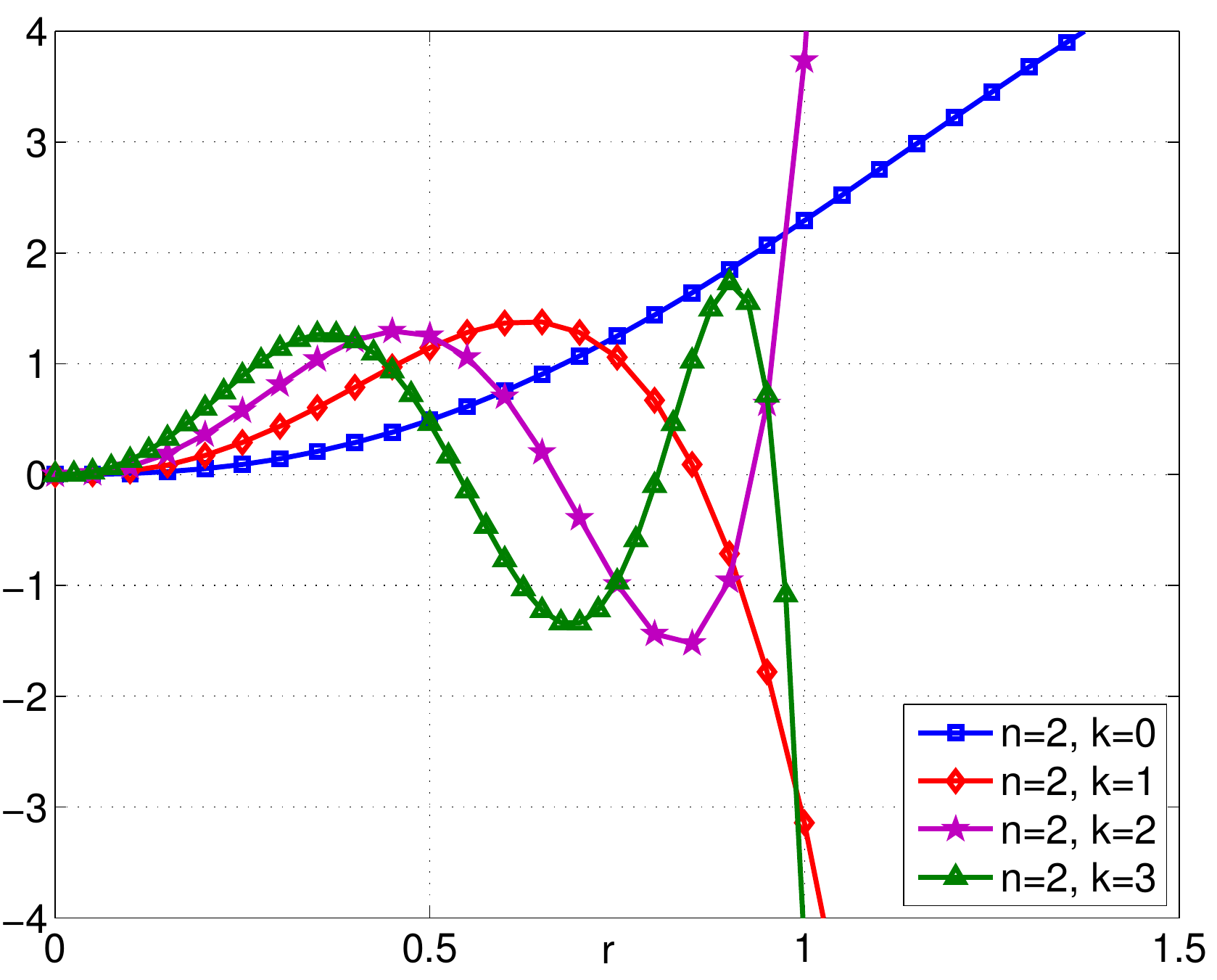}}
  \hspace{0in}
  \subfigure[Graph of $\psi_{n,k}^{(0)}(r;c)$ with  $c=10$ and $d=2$.]{
    \includegraphics[width=2.7in]{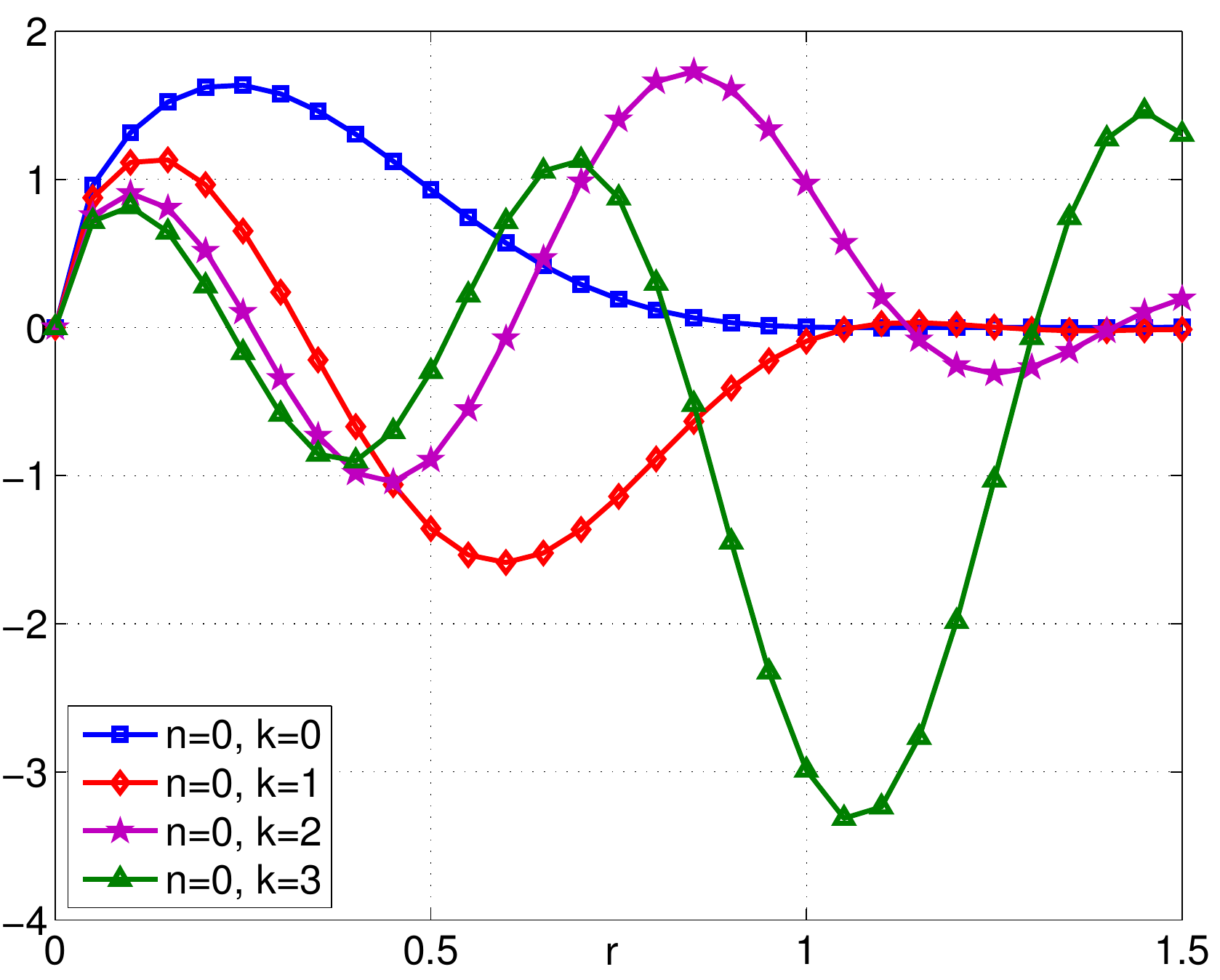}}
  \vfill
  \subfigure[Graph of $\psi_{n,k}^{(0)}(r;c)$ with  $c=2$ and $d=3$.]{
    \includegraphics[width=2.7in]{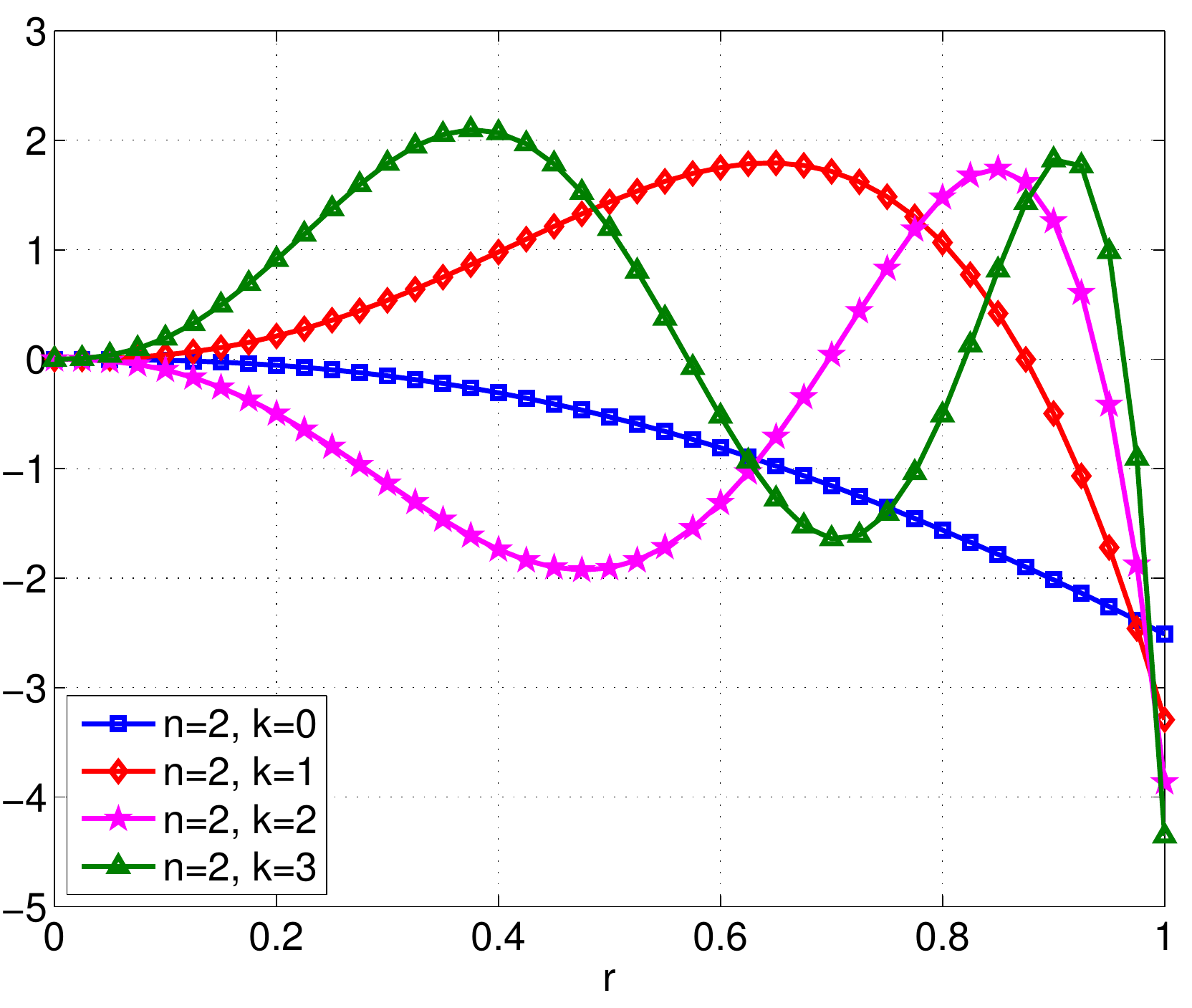}}
  \hspace{0in}
  \subfigure[Graph of $\psi_{n,k}^{(1)}(r;c)$ with  $c=10$ and $d=3$.]{
    \includegraphics[width=2.7in]{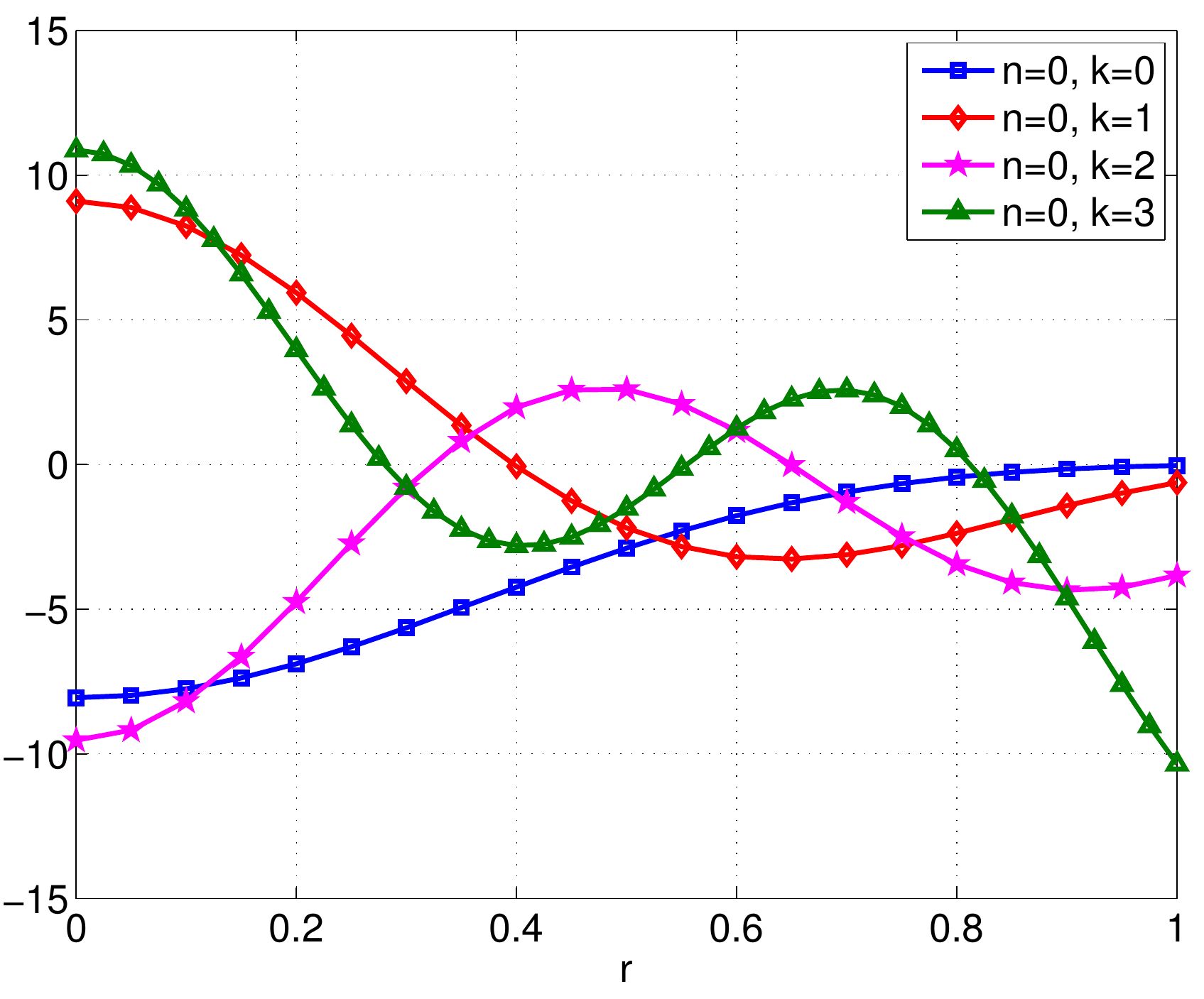}}
  \caption{\small  Graphs of $\psi_{n,k}^{(\af)}(r;c)$ in $2$-dimension and $3$-dimension. }
\label{psiD2D3}
\end{figure}

%
\begin{figure}
  \centering
  \subfigure[$(\af, n, k, l)=(0,1,0,1).$]{
    \includegraphics[width=2.7in]{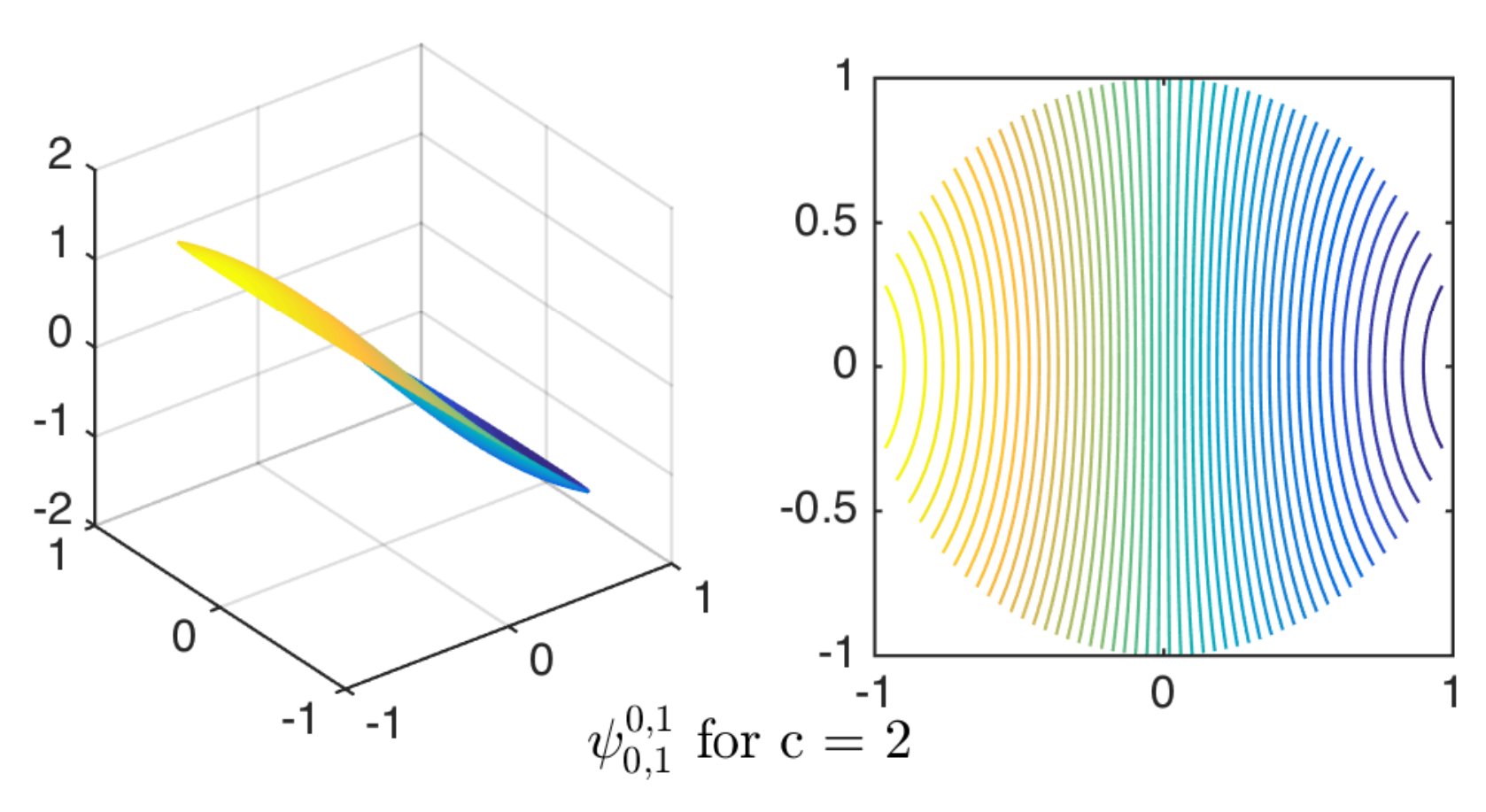}}
  \hspace{0in}
  \subfigure[$(\af, n, k, l)=(0,1,0,2).$]{
    \includegraphics[width=2.7in]{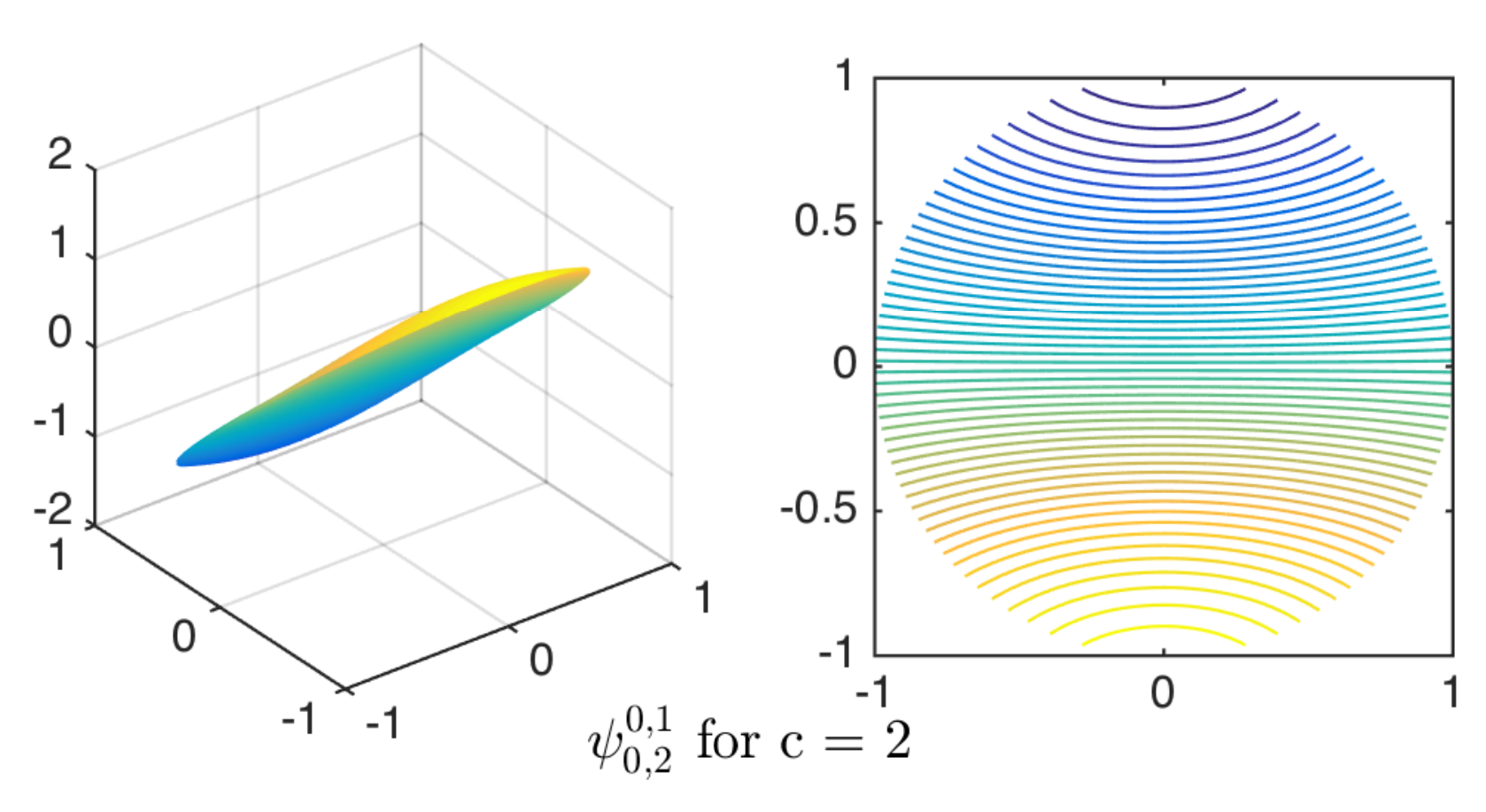}}
  \vfill
  \subfigure[$(\af, n, k, l)=(0,2,0,1).$]{
    \includegraphics[width=2.7in]{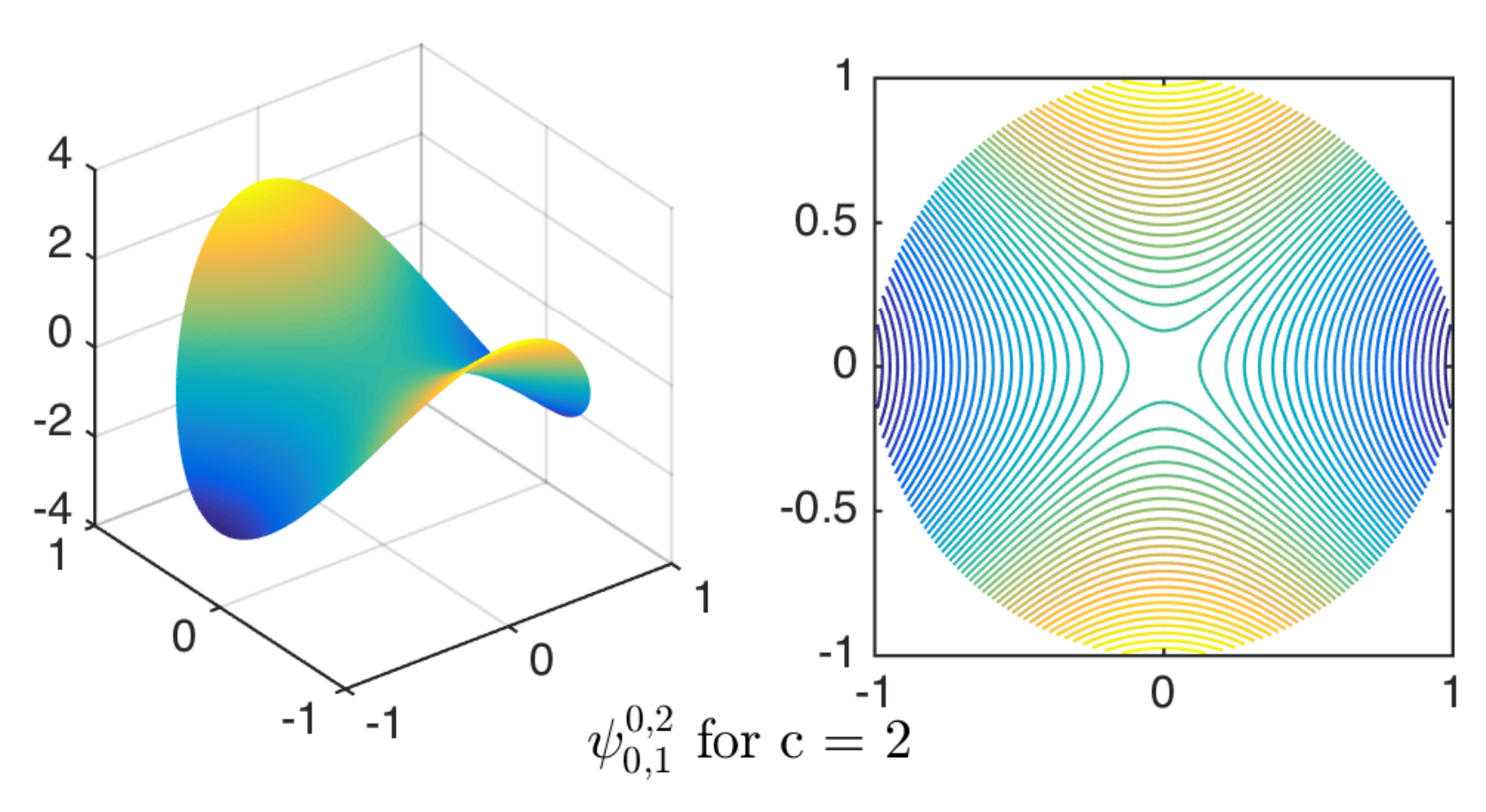}}
  \hspace{0in}
  \subfigure[$(\af, n, k, l)=(0,2,0,2).$]{
    \includegraphics[width=2.7in]{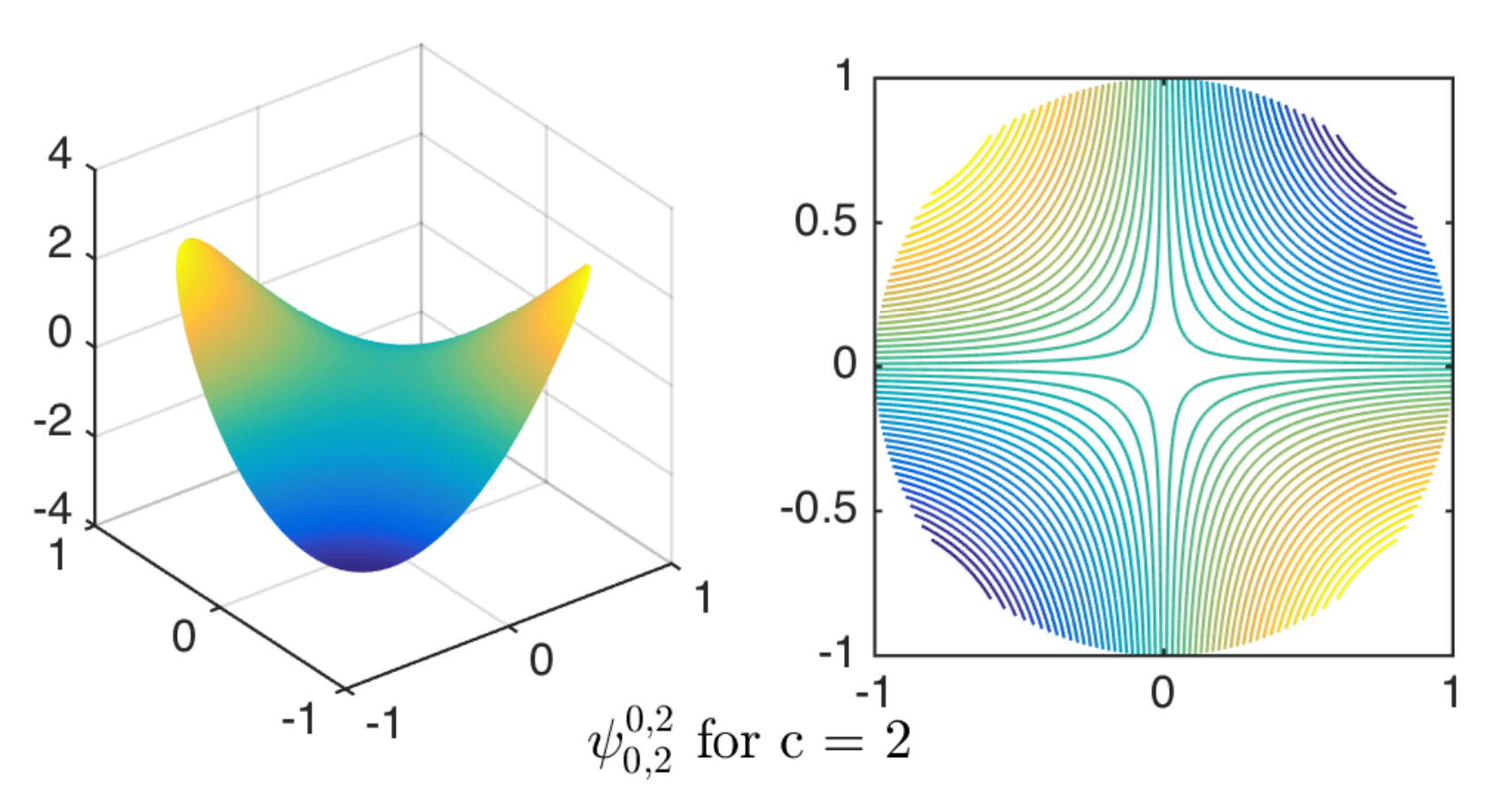}}
  \caption{Eigenfunctions  $\psi_{k,l}^{\af,n}$ with $c=2$ in $2$-dimension.}\label{surfpsi2d1}
\end{figure}

\begin{figure}
  \centering
  \subfigure[$(\af, n, k, l)=(0,0,0,1).$]{
    \includegraphics[width=2.7in]{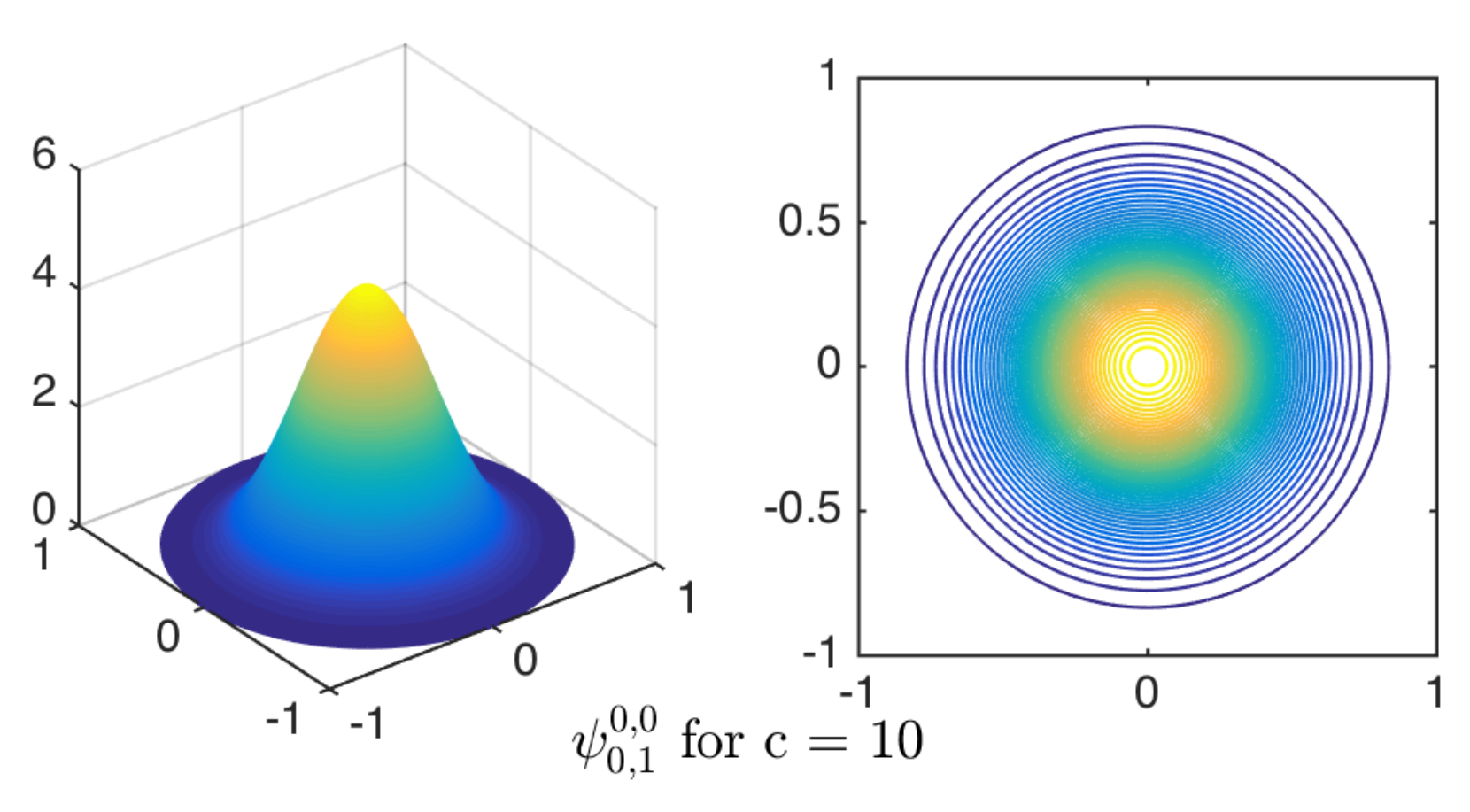}}
  \hspace{0in}
  \subfigure[$(\af, n, k, l)=(0,0,1,1).$]{
    \includegraphics[width=2.7in]{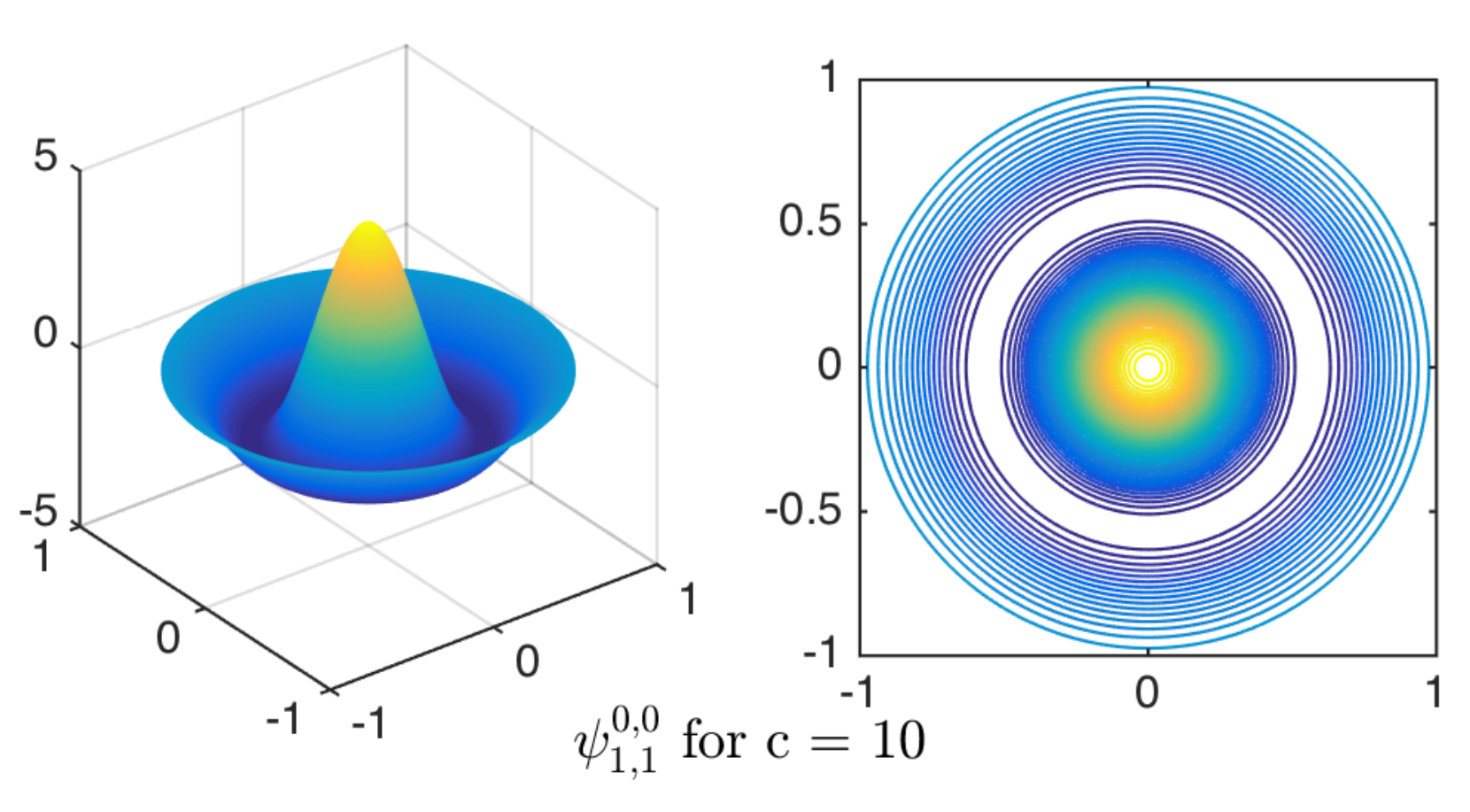}}
  \vfill
  \subfigure[$(\af, n, k, l)=(0,0,2,1).$]{
    \includegraphics[width=2.7in]{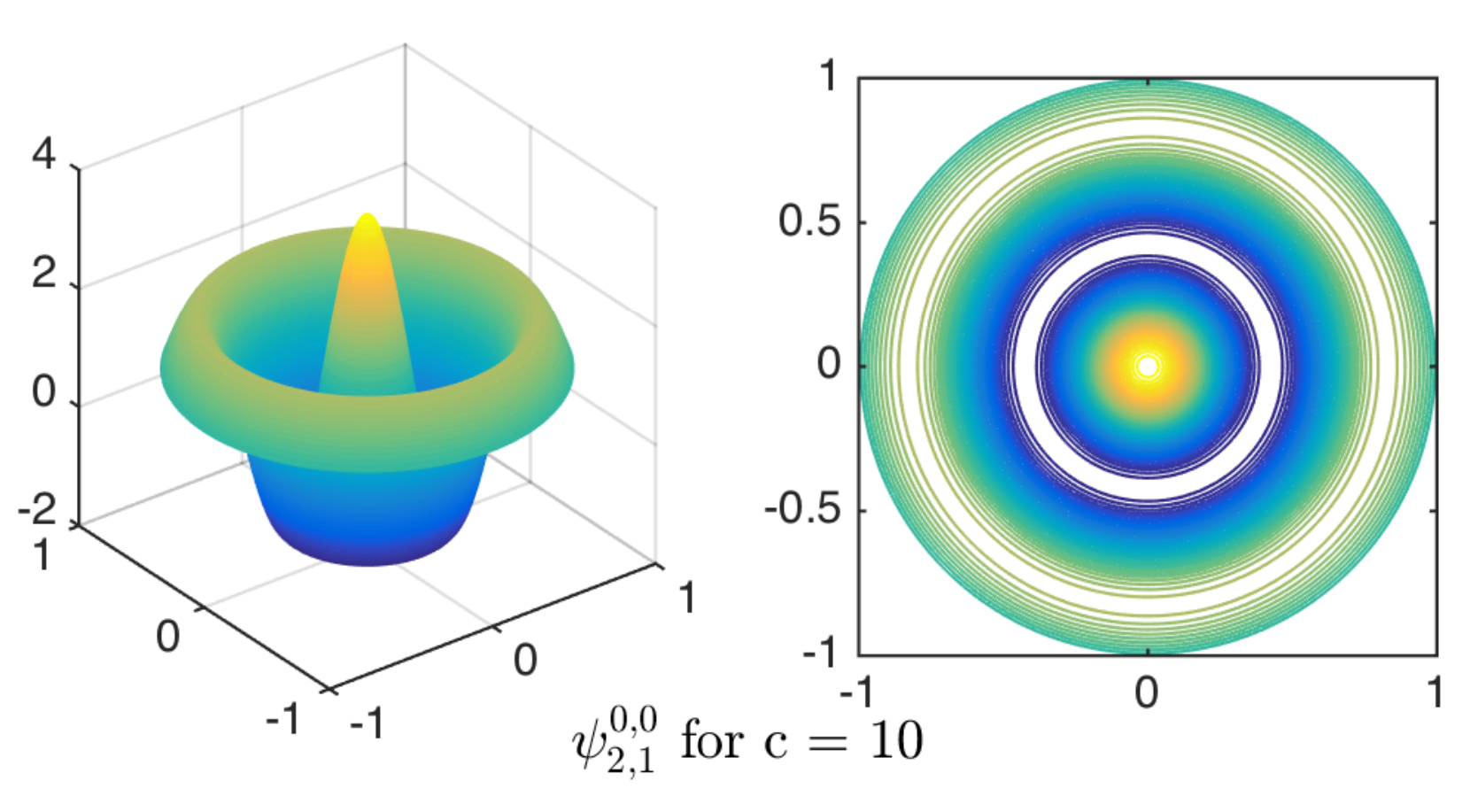}}
  \hspace{0in}
  \subfigure[$(\af, n, k, l)=(0,0,3,1).$]{
    \includegraphics[width=2.7in]{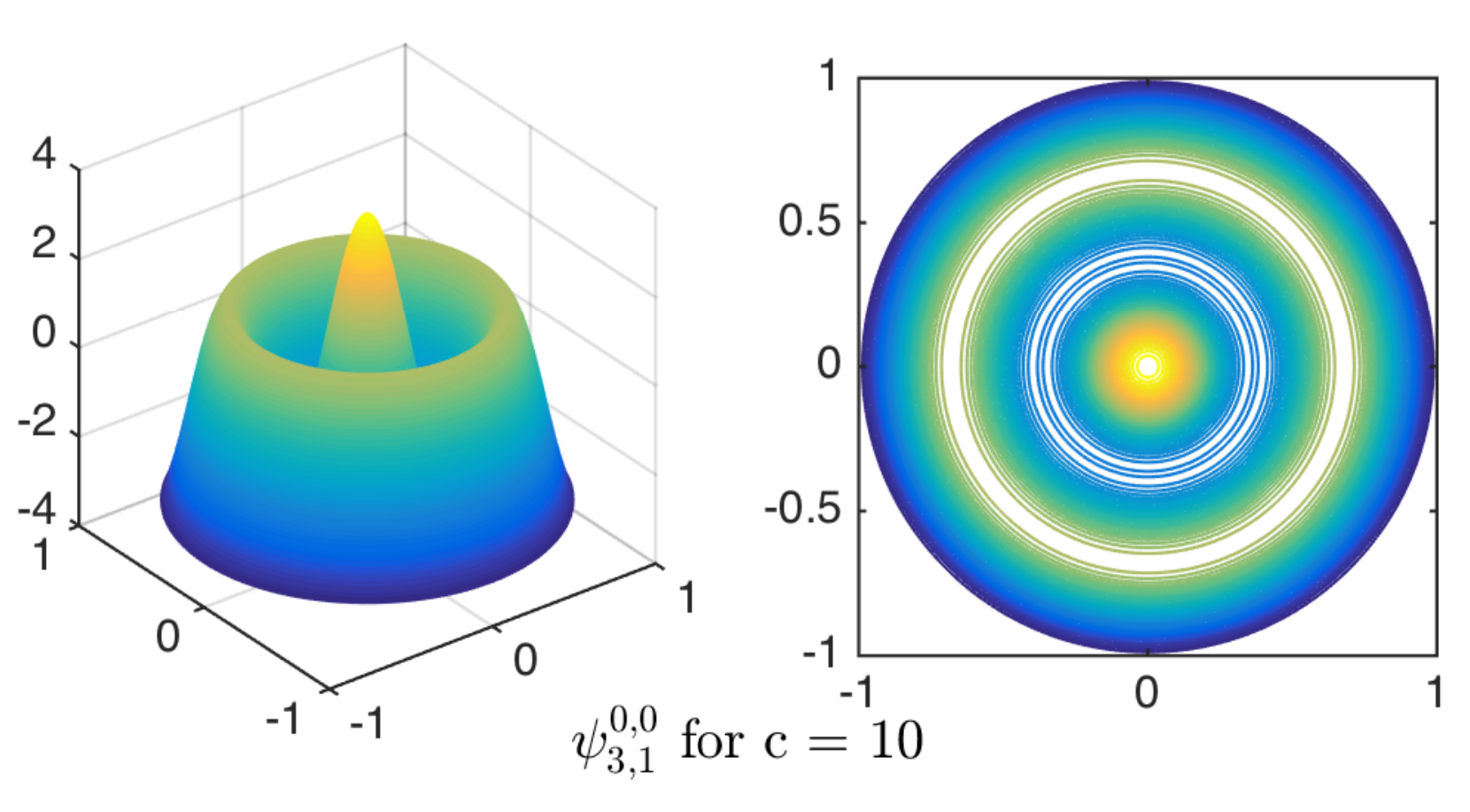}}
  \caption{Eigenfunctions  $\psi_{k,l}^{\af,n}$  with $c=10$ in $2$-dimension.}\label{surfpsi2d2}
\end{figure}

\begin{table}[!th] \centering
\caption{\small The case $d=3$: $\chi_{n,k}^{(1)}(c)$ and $\lambda_{n,k}^{(1)}(c)$ }
\label{tab3}
{\small
\begin{tabular}{llllllll}
  \hline
   $c$ & $n$ &  $k$ &\vs $\chi_{n,k}^{(1)}(c)$ &\vs $\lambda_{n,k}^{(1)}(c)$ \\
  \hline\hline
$0.1$& $0$  &  $0$ & $4.285325573224633e-03$\vs&$ 1.675003294483135e+00$\\
$0.5$& $0$  &   $0$ & $ 1.069001304053325e-01$&$ 1.662771473208847e+00 $\\
$1$& $0$  &  $0$ & $4.246991437751348e-01$&$1.625460618463697e+00$\\
$4$& $0$  &  $0$ & $ 5.948719383823520e+00 $&$ 1.102600593723482e+00$\\
$10$& $0$  &   $0$ & $ 2.333891804161449e+01 $&$4.186593008319554e-01$\\
\hline
$2$& $1$  &   $0$ & $  8.182057327887621e+00$&$4.238871423701353e-01 $\\
$2$& $1$  &   $1$ & $  2.609221756616164e+01$&$ 8.233874011948259e-03$\\
$2$& $1$  &   $2$ & $ 5.205150235186056e+01$&$  6.516928432939569e-05$\\
$2$& $1$  &   $3$ & $8.603255633419086e+01   $&$  2.809367682507114e-07$\\
$2$& $1$  &   $4$ & $ 1.280223716202459e+02  $&$ 7.613268689084687e-10$\\
\hline
\end{tabular} }
\end{table}

\begin{table}[!thb] \centering
\caption{\small The case $d=3:$ $\psi_{n,k}^{(\af)}(r;c)\triangleq r^n\phi_{k}^{\af,n}(2r^2-1;c)$ with
$\af=0, 1, 2.$}
\label{tab4}
{\small
\begin{tabular}{llllllll}
  \hline
$r$ &  $c$ & $n$ &  $k$ &\vs $\psi_{n,k}^{(0)}(r;c)$ &\vs $\psi_{n,k}^{(1)}(r;c)$&\vs $\psi_{n,k}^{(2)}(r;c)$ \\
  \hline\hline
$0.1$ & $1$& $0$  &  $0$ & $\;\;\;  5.805625733654062e-01 $&$\;\;\;  2.820561183868252e+00 $&$\;\;\; 3.687764193662462e+00$\\

$0.2$ & $1$& $0$  &  $0$ & $\;\;\; 8.186066482900428e-01 $&$\;\;\;2.814575764166440e+00 $&$\;\;\;  3.681662607508843e+00 $\\
$0.5$ & $1$& $0$  &  $0$ & $\;\;\;1.267632861585855e+00$&$\;\;\; 2.772954660597707e+00  $&$\;\;\; 3.639182765466543e+00 $\\
$1$ & $1$  & $0$  &  $0$ & $\;\;\; 1.662390750491349e+00 $&$\;\;\;2.628204021066972e+00 $&$\;\;\;3.490731274213273e+00$\\
$1.3$ & $1$& $0$  &  $0$ & $\;\;\;1.765639810965165e+00 $&$\;\;\;2.500277467362624e+00 $&$\;\;\;3.358563656867405e+00 $\\
$2$ & $2$& $0$  &  $0$ & $\;\;\; 3.553627999772212e-01 $&$\;\;\;8.545596995365403e-01 $&$\;\;\;1.510596282792738e+00$\\
\hline
$0.1$ & $1$& $2$  &  $3$ & $-1.893124346916359e-01 $&$-7.943270542522487e-01  $&$ -1.008278981214814e+00 $\\
$0.2$ & $1$& $2$  &  $3$ & $-8.958937078881810e-01 $&$-2.580441975019594e+00 $&$-3.177610574908396e+00 $\\
$0.5$ & $1$& $2$  &  $3$ & $ -1.239366584847178e+00 $&$ -8.701135484764851e-01$&$\;\;\;3.812964021006710e-01 $\\
$1$ & $2$  & $2$  &  $3$ & $\;\;\; 4.355438266567036e+00 $     &$\;\;\; 2.314178264971302e+01 $&$\;\;\; 7.372606028015183e+01 $\\
$1.3$ & $2$& $2$  &  $3$ & $\;\;\;  5.467434735434442e+02 $     &$\;\;\;1.160117778266639e+03 $&$\;\;\;  2.449778131304879e+03$\\
$2$ & $2$& $2$  &  $3$ & $\;\;\;4.569351866698169e+04 $     &$\;\;\; 6.922735069954877e+04 $&$\;\;\; 1.335271987655634e+05  $\\
\hline
\end{tabular} }
\end{table}
In Figure \ref{lamchid2d3} (c)-(d), we depict that $\chi_{n,k}^{(1)}(c)$ and $\lambda_{n,k}^{(1)}(c)$ for various $k$ in the $3$-dimensional case. It is clear that $\chi_{n,k}^{(1)}(c)$  (resp. $\lambda_{n,k}^{(1)}(c)$) become larger (resp. smaller) as $k$ increases. Some values of $\chi_{n,k}^{(1)}(c)$ and $\lambda_{n,k}^{(1)}(c)$ for a large set of parameter values
are given in Table \ref{tab3}. We plot in Figure \ref{psiD2D3} (c)-(d)  some samples of the $\psi_{n,k}^{(\alpha)}(r;c)$ with $d=3.$ We tabulate some values of
$\psi_{n,k}^{(\alpha)}(r;c)$ with $d=3$ in Table \ref{tab4} computed by the aforementioned method. Figures \ref{surfpsi3d1}-\ref{surfpsi3d2} visualize  of $\psi_{k,l}^{\af, n}(\bx;c)$ with different $k,l,n,\af$ and $c$ with $d=3.$

%
%
%
%
%


\begin{figure}
  \centering
  \subfigure[$(\af, n, k, l)=(0,0,0,1).$]{
    \includegraphics[width=1.7in]{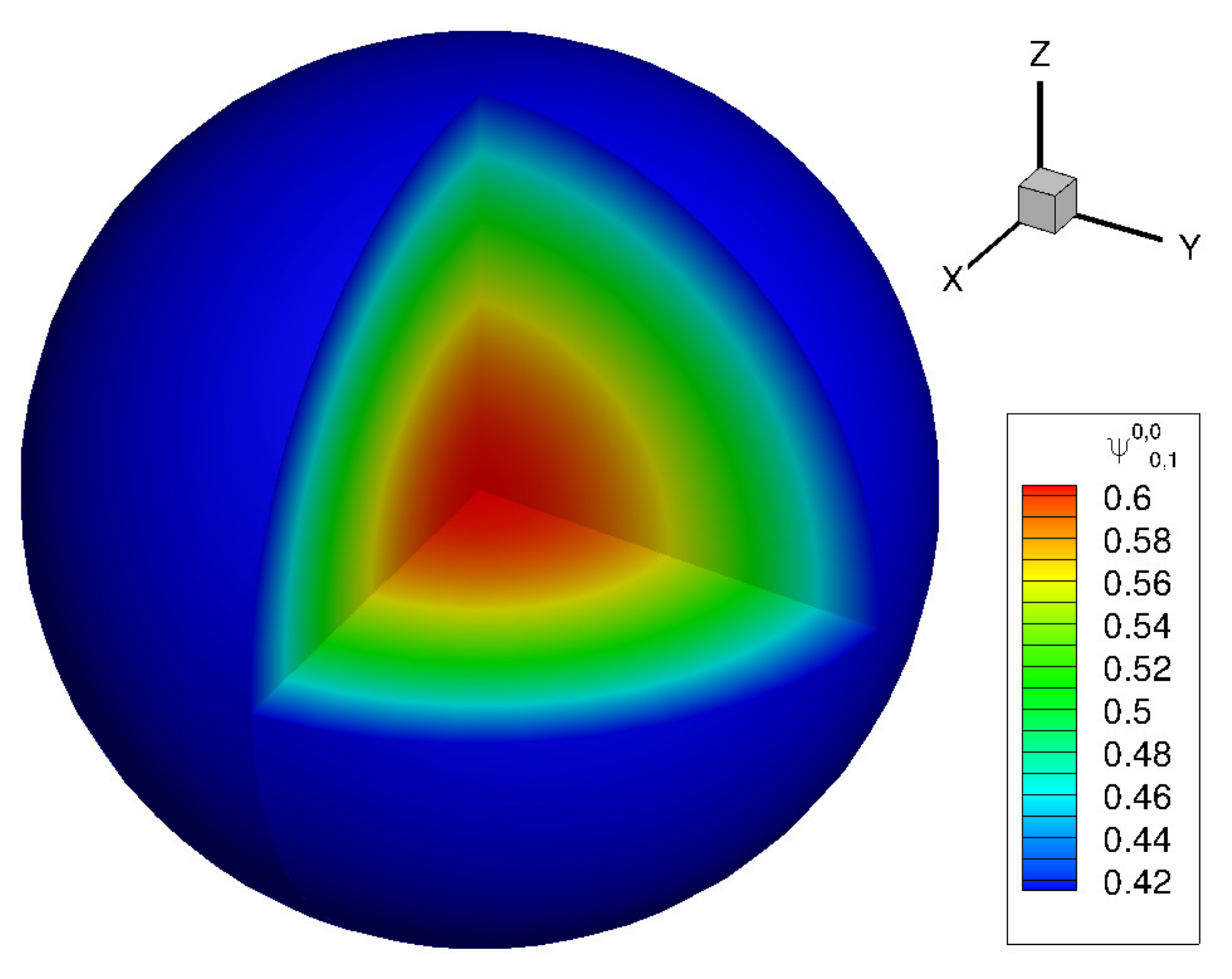}}
  \hspace{0in}
  \subfigure[$(\af, n, k, l)=(0,0,1,1).$]{
    \includegraphics[width=1.7in]{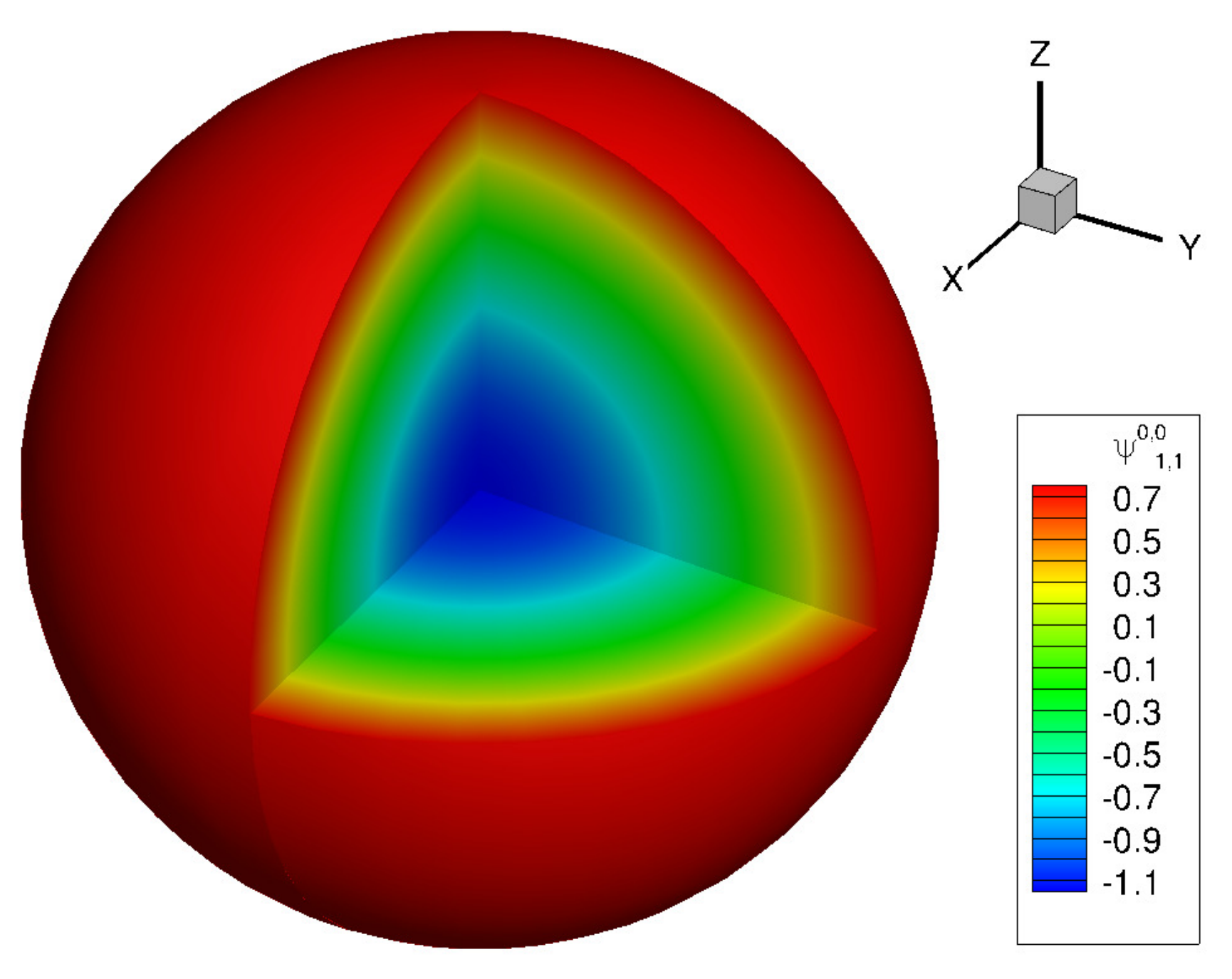}}
  \hspace{0in}
  \subfigure[$(\af, n, k, l)=(0,0,2,1).$]{
    \includegraphics[width=1.7in]{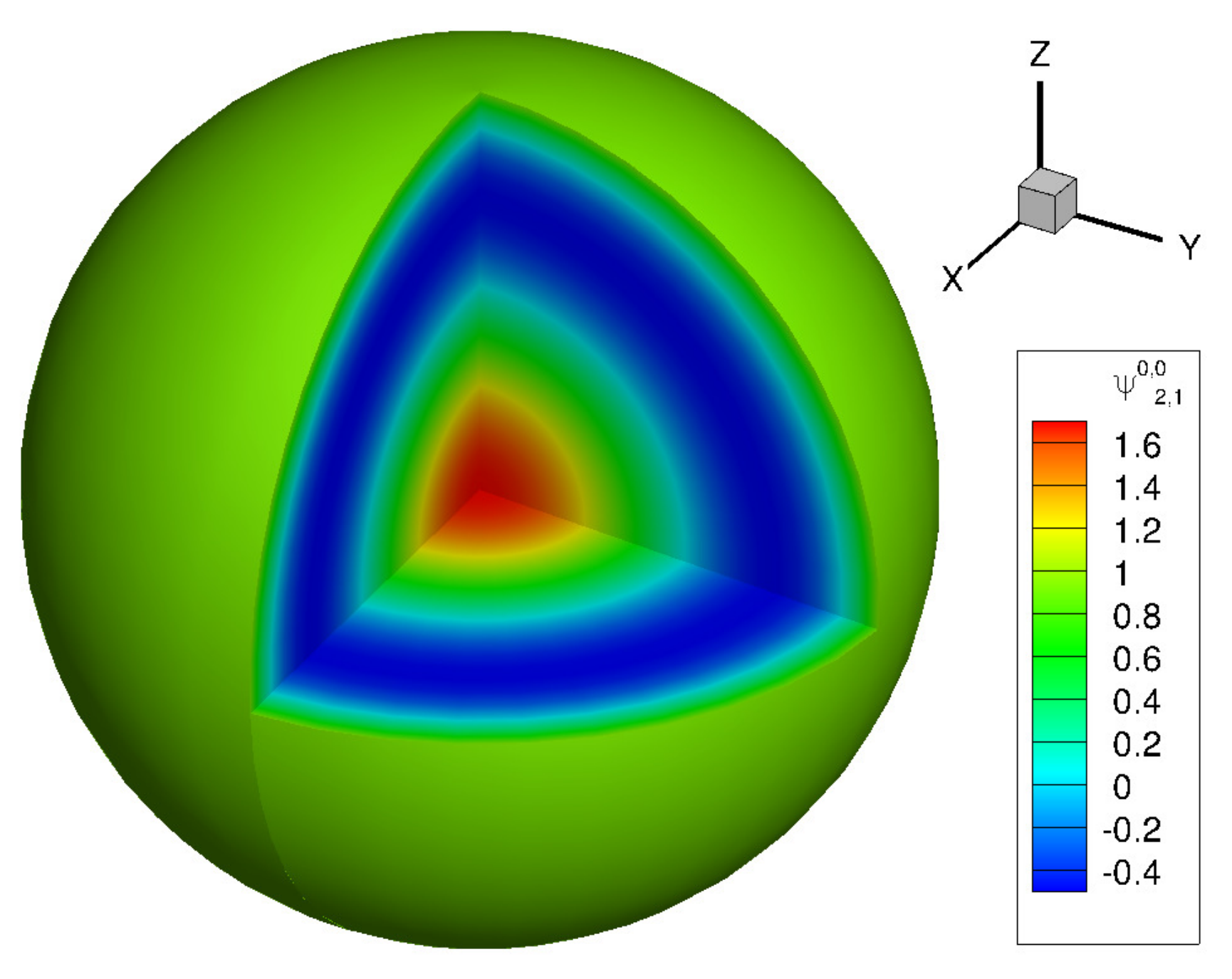}}
  \vfill
  \subfigure[$(\af, n, k, l)=(0,2,0,1).$]{
    \includegraphics[width=1.7in]{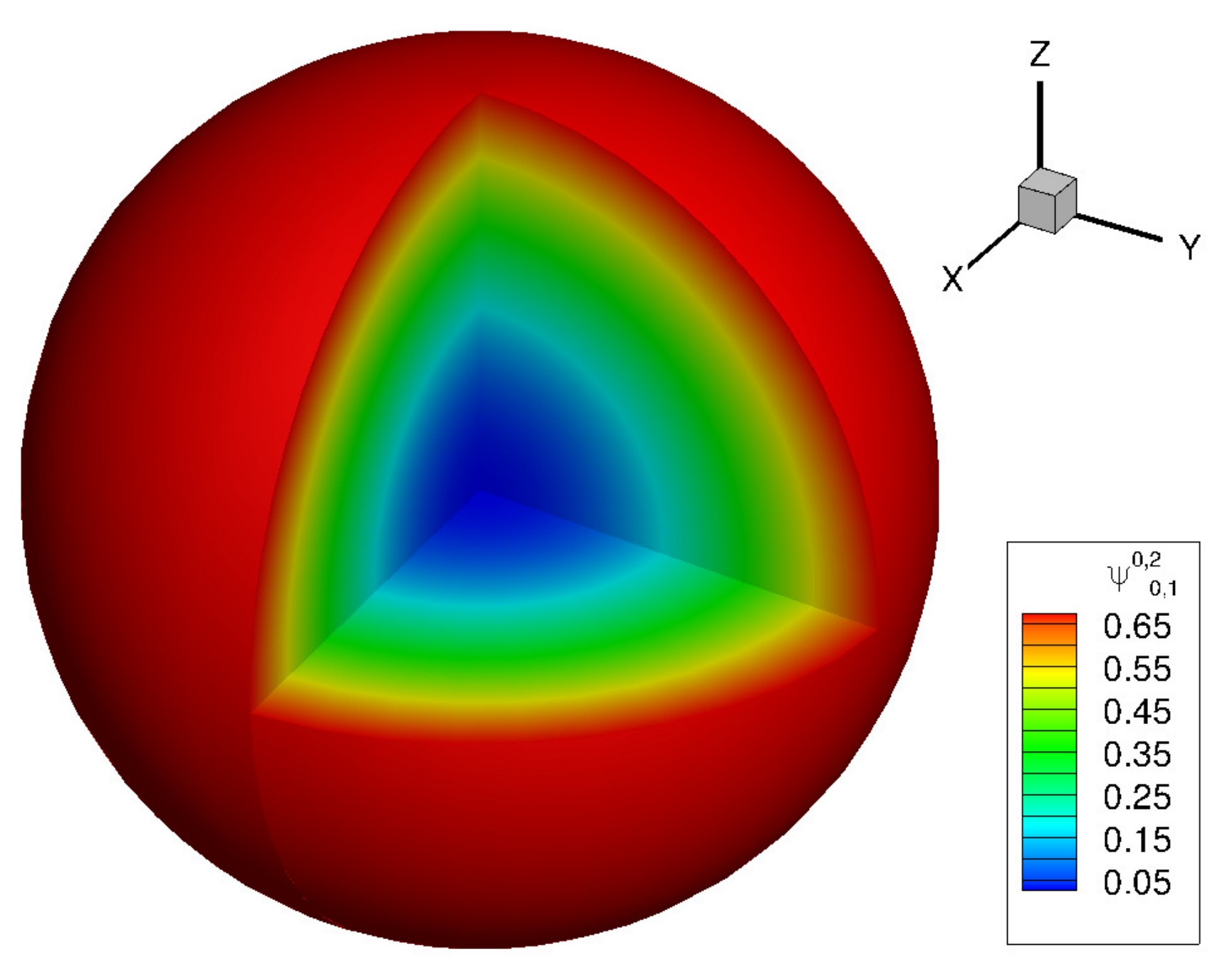}}
  \hspace{0in}
  \subfigure[$(\af, n, k, l)=(0,2,1,1).$]{
    \includegraphics[width=1.7in]{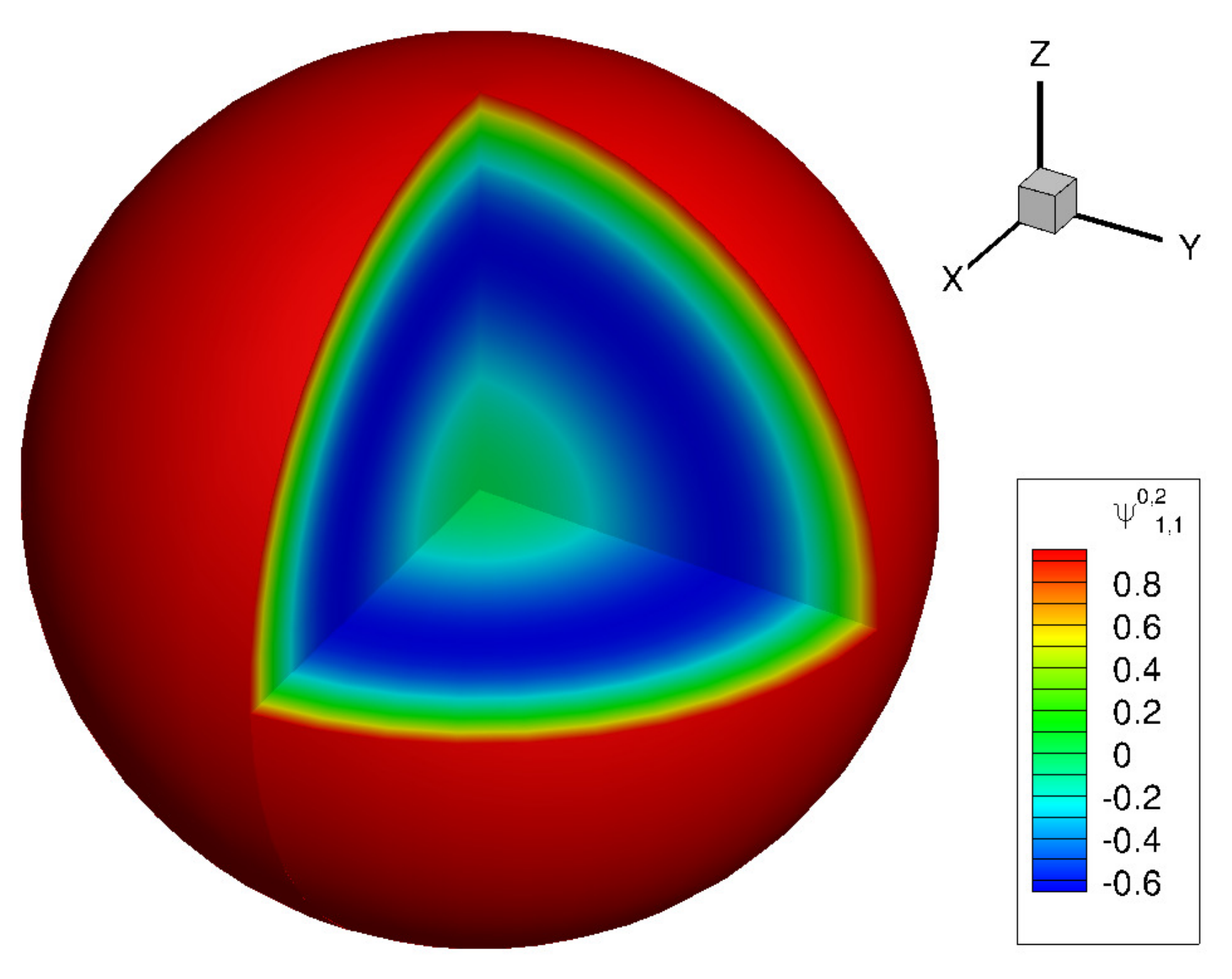}}
    \hspace{0in}
  \subfigure[$(\af, n, k, l)=(0,2,2,1).$]{
    \includegraphics[width=1.7in]{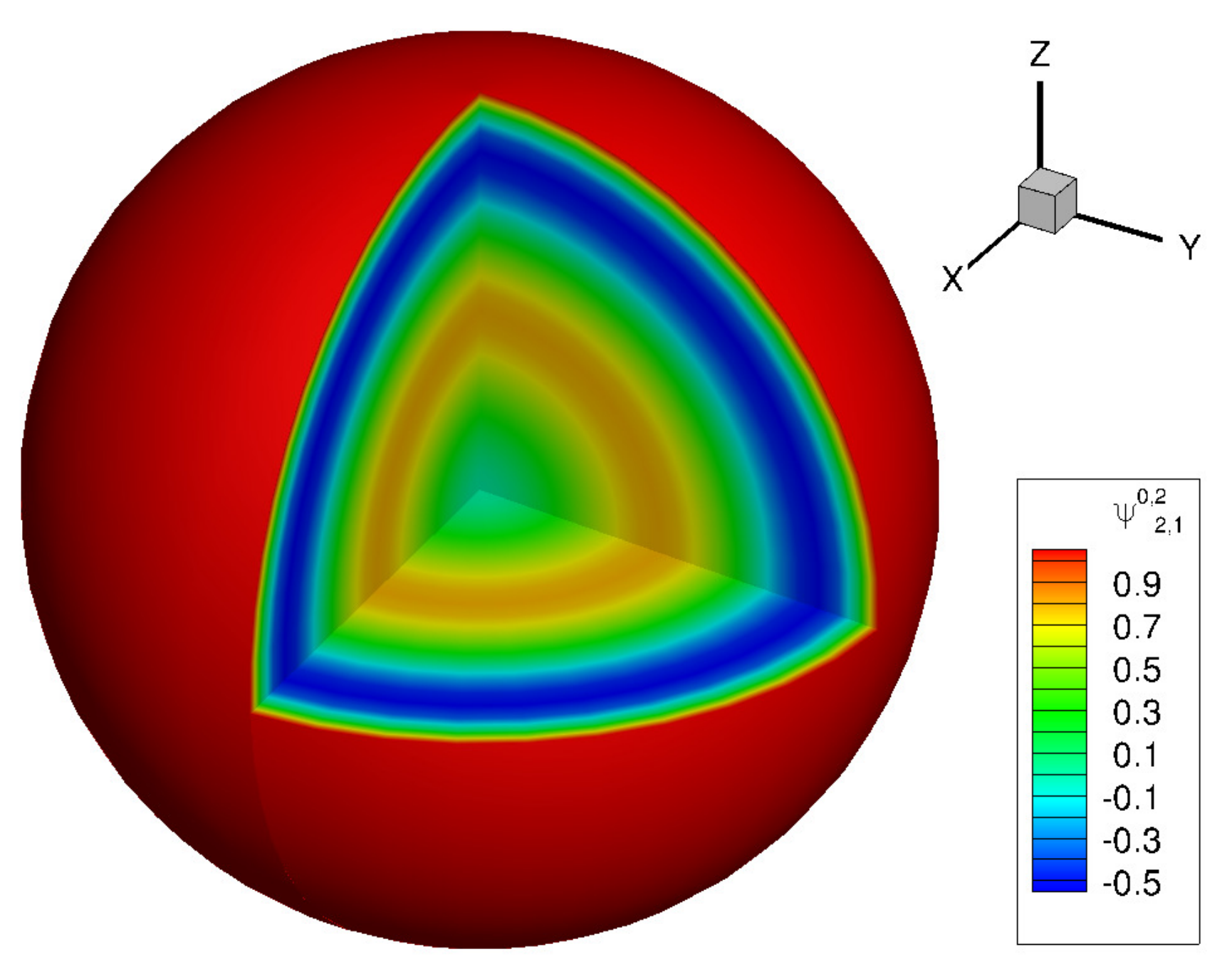}}
  \caption{Eigenfunctions  $\psi_{k,l}^{\af,n}$ with $c=2$ in $3$-dimension.}\label{surfpsi3d1}
\end{figure}

\begin{figure}
  \centering
  \subfigure[$(\af, n, k, l)=(1,0,0,1).$]{
    \includegraphics[width=1.7in]{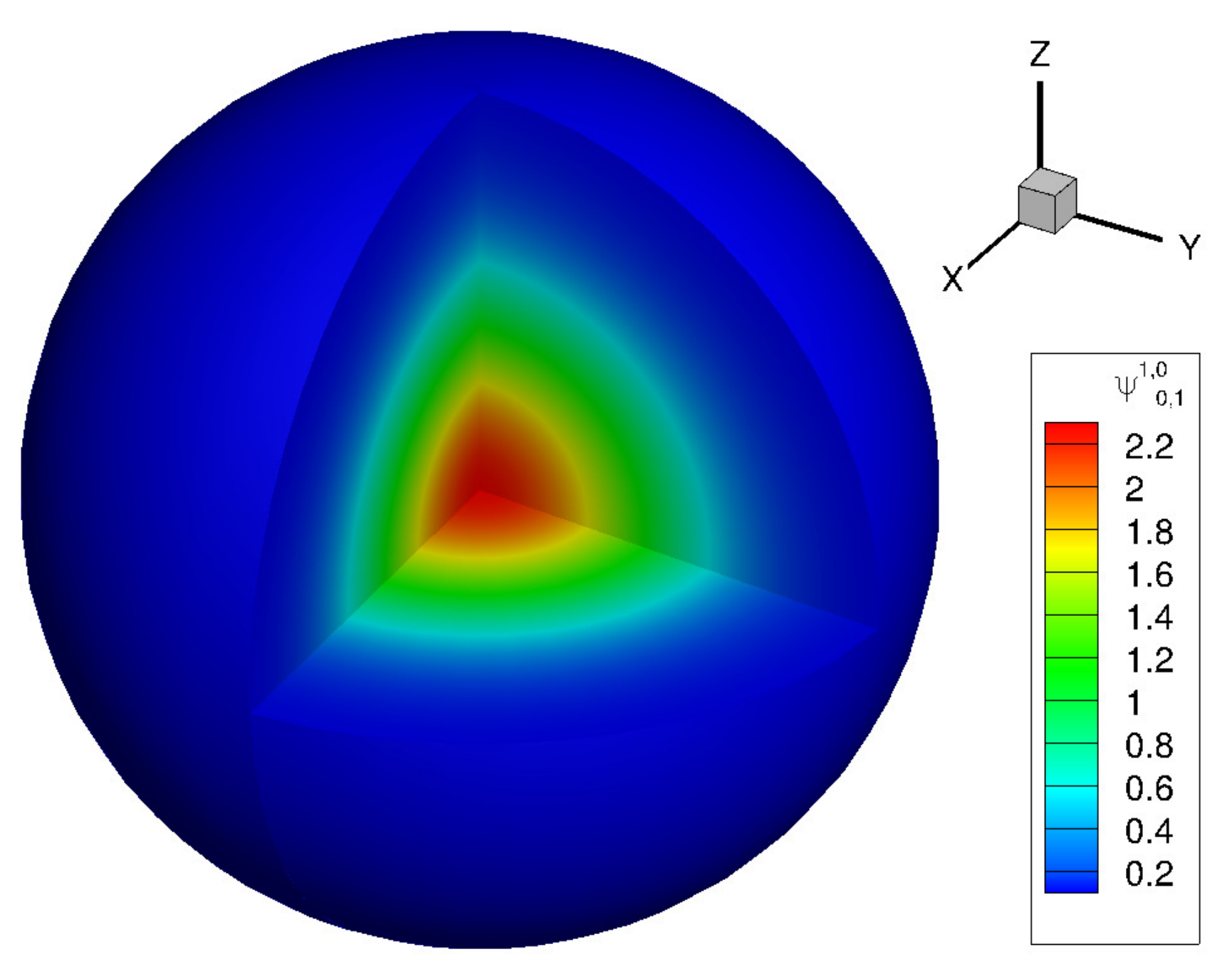}}
  \hspace{0in}
  \subfigure[$(\af, n, k, l)=(1,0,1,1).$]{
    \includegraphics[width=1.7in]{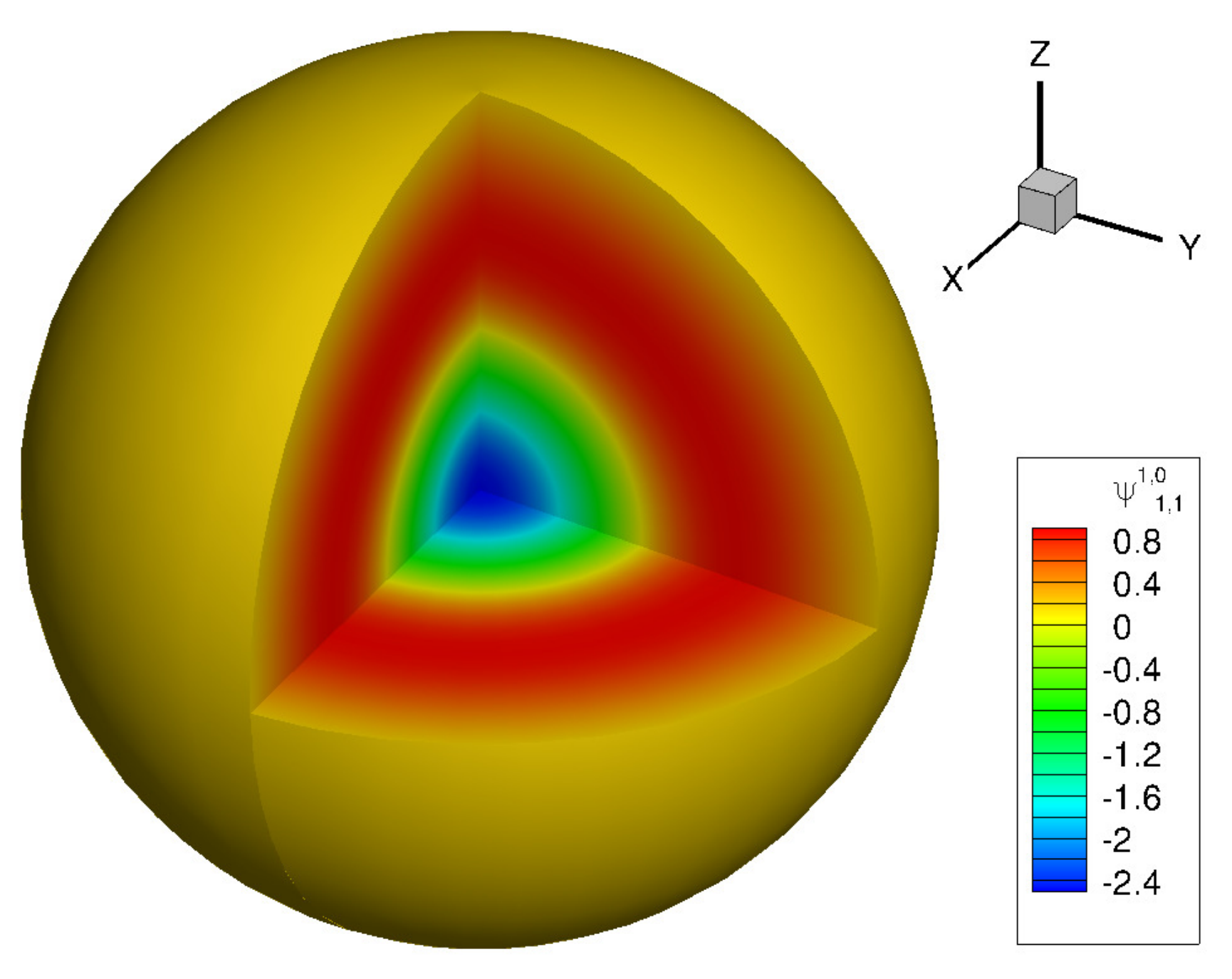}}
  \hspace{0in}
  \subfigure[$(\af, n, k, l)=(1,0,2,1).$]{
    \includegraphics[width=1.7in]{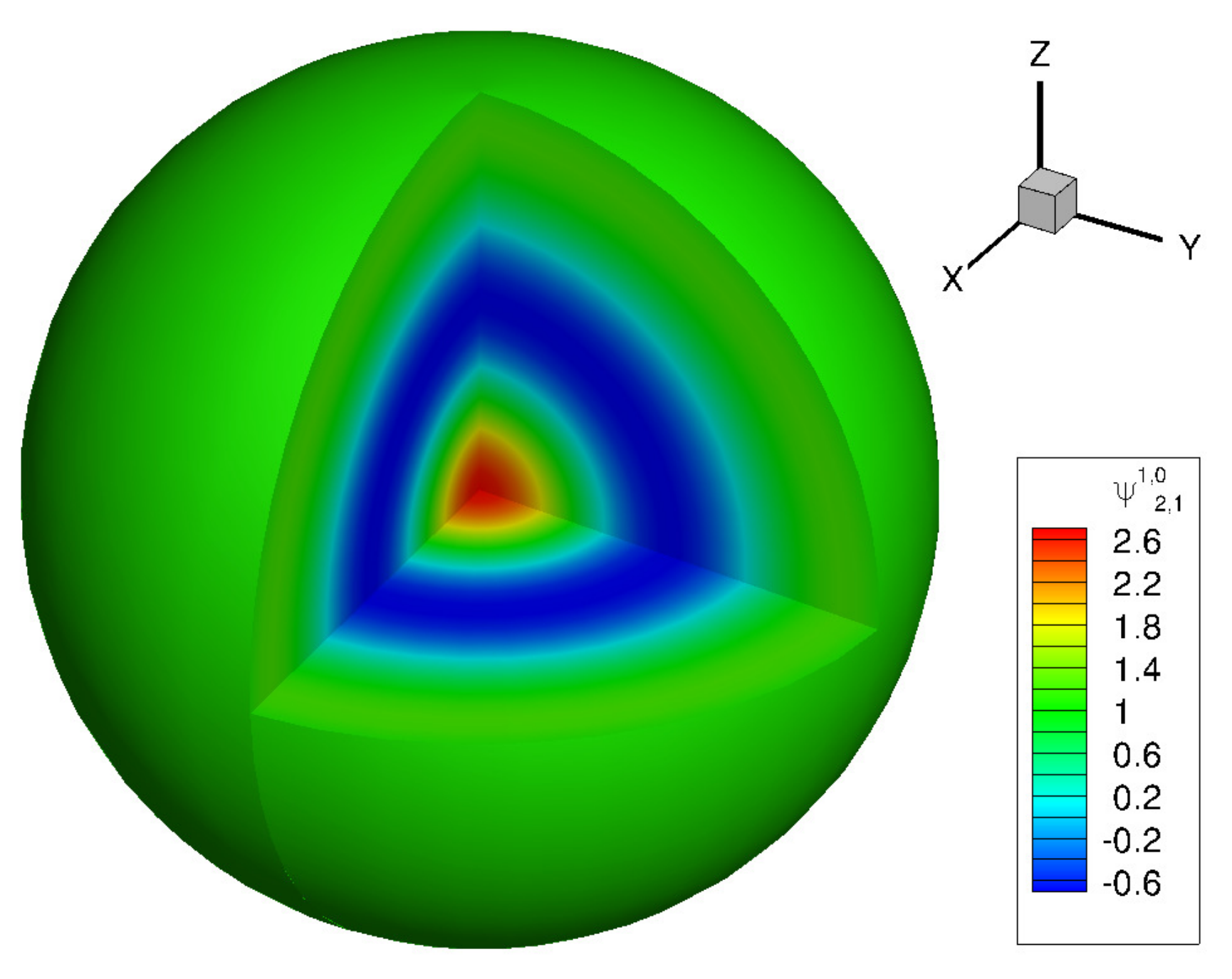}}
  \vfill
  \subfigure[$(\af, n, k, l)=(1,1,0,2).$]{
    \includegraphics[width=1.7in]{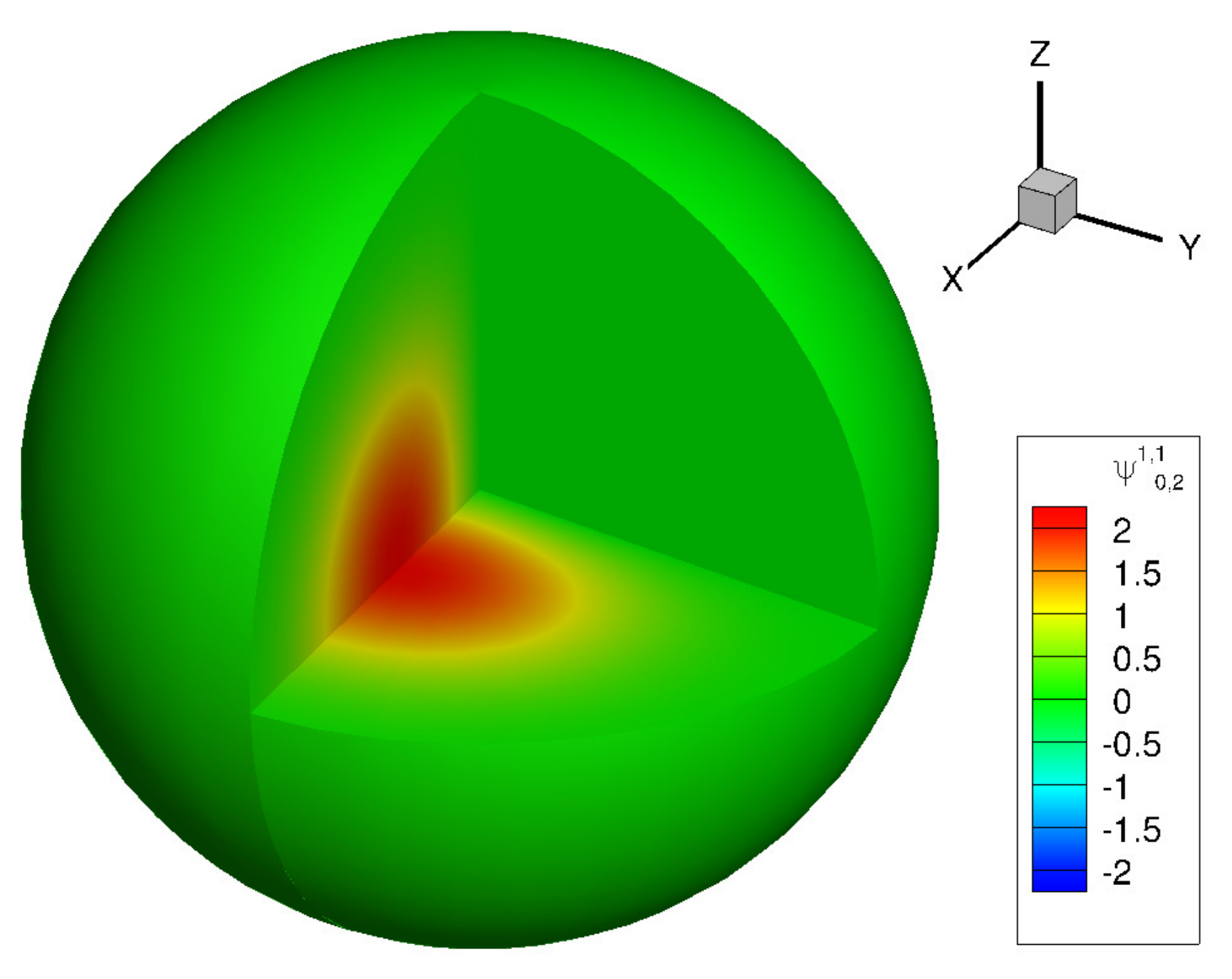}}
  \hspace{0in}
  \subfigure[$(\af, n, k, l)=(1,1,1,2).$]{
    \includegraphics[width=1.7in]{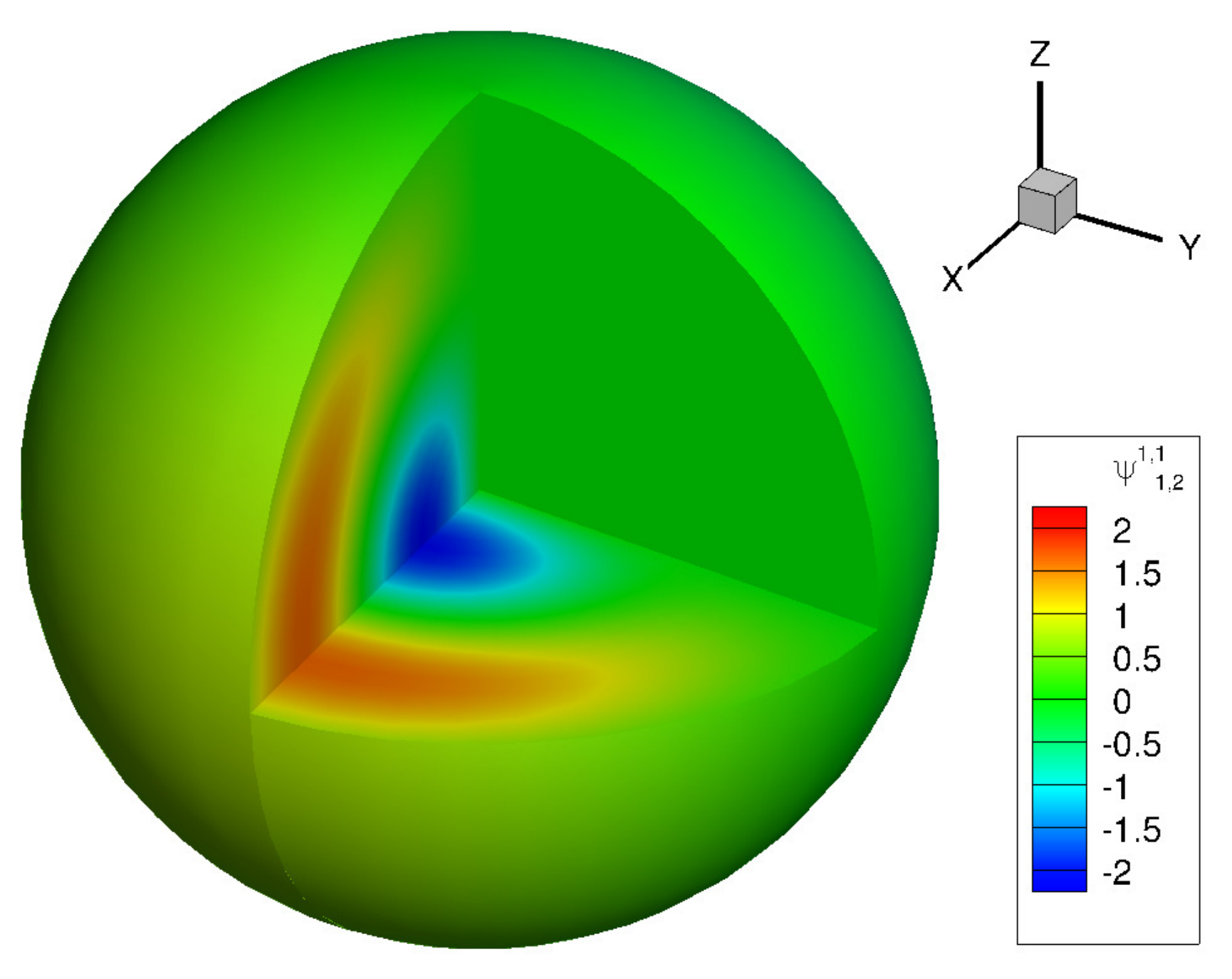}}
    \hspace{0in}
  \subfigure[$(\af, n, k, l)=(1,1,2,2).$]{
    \includegraphics[width=1.7in]{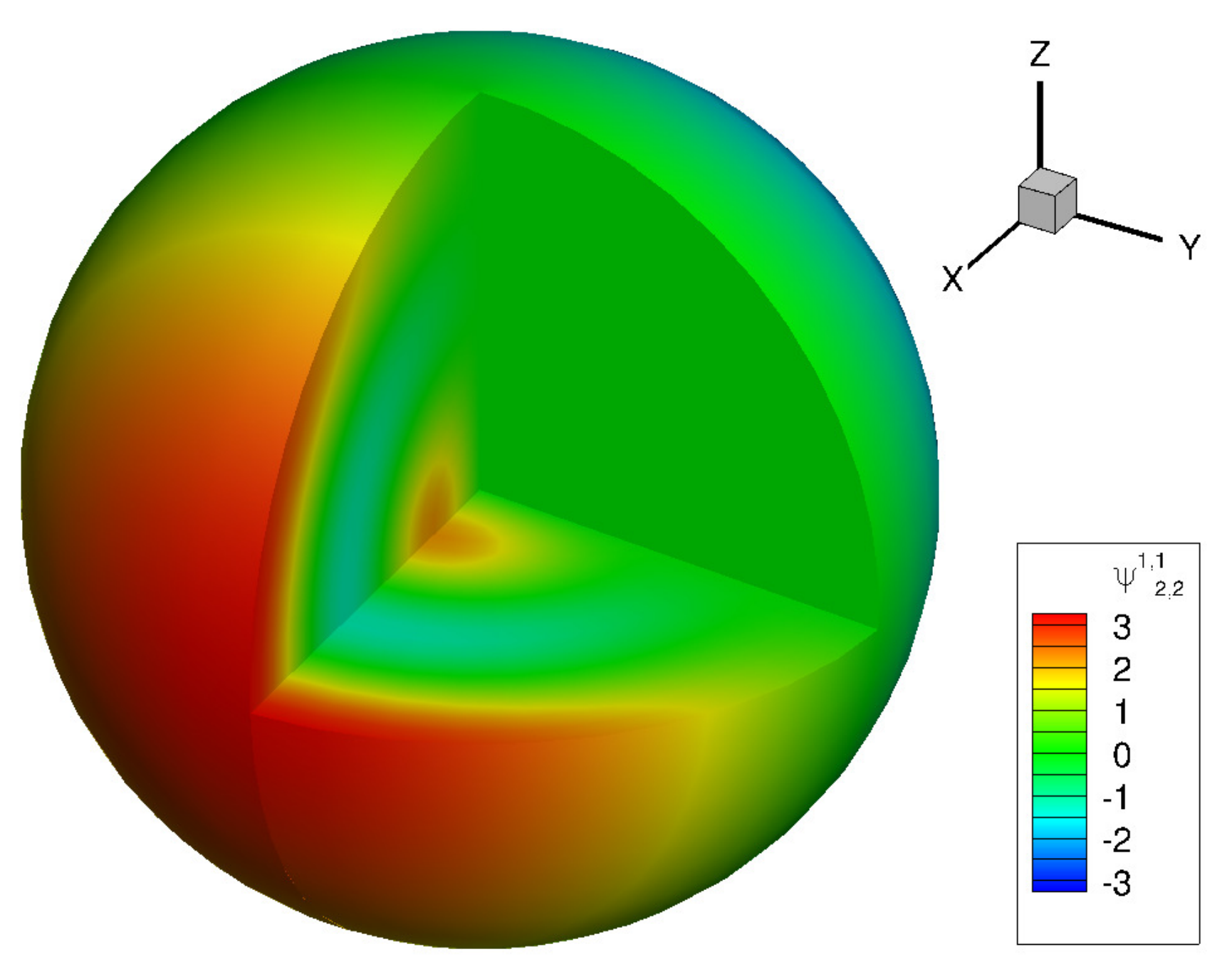}}
  \caption{Eigenfunctions  $\psi_{k,l}^{\af,n}$ with $c=10$ in $3$-dimension. }\label{surfpsi3d2}
\end{figure}

\bibliographystyle{plain}
\bibliography{Thesisreferpapers}

\end{document}